\DeclareMathOperator\supp{supp}
\newfont{\bbf}{msbm10 scaled\magstep1}
\newcounter{glob}[subsection]
\renewcommand\theglob{%
	\ifnum\arabic{section}=0\else\arabic{section}.\fi %
	\ifnum\arabic{subsection}=0\else\arabic{subsection}.\fi %
	\arabic{glob}}
\newtheorem{thm}[glob]{Theorem}
\newtheorem{lemma}[glob]{Lemma}
\newtheorem{cor}[glob]{Corollary}
\theoremstyle{definition}\newtheorem{defi}[glob]{Definition}
\theoremstyle{remark}\newtheorem{remark}[glob]{Remark}
\theoremstyle{remark}
\def\Z{\mathbb{Z}}
\def\A{\mathcal{A}}
\def\C{\mathbb{C}}
\def\R{\mathbb{R}}
\def\Q{\mathbb{Q}}
\def\N{\mathbb{N}}
\def\P{\mathbb{P}^1}
\def\G{\widetilde{G}}
\def\Pl{PSL_2(\Z)}
\newcommand*{\email}[1]{
	\normalsize\href{mailto:#1}{#1}\par}
\newcommand{\keywords}[1]{\textbf{\textit{Keywords---}} #1}
\pgfplotsset{width=8cm}
\title{\bf Non-triviality of the Poisson boundary of random walks on the group $H(\Z)$ of Monod}
\author{Bogdan Stankov}
\affil{D\'epartement de math\'ematiques et applications, \'Ecole normale sup\'erieure, CNRS,\\ PSL Research University, 75005 Paris, France\\ \email{bogdan.zl.stankov@gmail.com}}
\date{\today}
\begin{document}
\maketitle
 
\begin{abstract}
We give sufficient conditions for the non-triviality of the Poisson boundary of random walks on $H(\Z)$ and its subgroups. The group $H(\Z)$ is the group of piecewise projective homeomorphisms over the integers defined by Monod. For a finitely generated subgroup $H$ of $H(\Z)$, we prove that either $H$ is solvable, or every measure on $H$ with finite first moment that generates it as a semigroup has non-trivial Poisson boundary. In particular, we prove the non-triviality of the Poisson boundary of measures on Thompson's group $F$ that generate it as a semigroup and have finite first moment, which answers a question by Kaimanovich.
\end{abstract}

\keywords{Random walks on groups, Poisson boundary, Schreier graph, Thompson's group $F$, groups of piecewise projective homeomorphisms, solvable group, locally solvable group}

\section{Introduction}
\thispagestyle{empty}
In 1924 Banach and Tarski~\cite{Banach-Tarski-original} decompose a solid ball into five pieces, and reassemble them into two balls using rotations. That is now called the Banach-Tarski paradox. Von Neumann~\cite{Neumann1929} observes that the reason for this phenomenon is that the group of rotations of $\R^3$ admits a free subgroup. He introduces the concept of amenable groups. Tarski~\cite{Tarski1938} later proves amenability to be the only obstruction to the existence of "paradoxical" decompositions (like the one in Banach-Tarski's article~\cite{Banach-Tarski-original}) of the action of the group on itself by multiplication, as well as any free actions of the group. One way to prove the result of Banach-Tarski is to see it as an almost everywhere free action of $SO_3(\R)$ and correct for the countable set where it is not (see e.g. Wagon~\cite[Cor.~3.10]{Banach-Tarski}).

The original definition of amenability of a group $G$ is the existence of an invariant mean. A mean is a normalised positive linear functional on $l^\infty(G)$. It is called invariant if it is preserved by translation on the argument. Groups that contain free subgroups are non-amenable. It is proven by Ol'shanskii in 1980~\cite{Olshanskii1980} that it is also possible for a non-amenable group to not have a free subgroup. Adyan~\cite{MR682486} shows in 1982 that all Burnside groups of a large enough odd exponent (which are known to be infinite by result of Novikov and Adyan from 1968~\cite{adyannovakov}) are non-amenable. Clearly they do not contain free subgroups. For more information and properties of amenability, see~\cite{bartholdi},\cite{article},\cite{greenleaf},\cite{Banach-Tarski}.

It is worth noting that despite the existence of a large amount of equivalent definitions of amenability, to our knowledge until recently all examples of non-amenable groups without free subgroups are proven (Ol'shanskii~\cite{Olshanskii1980}, Adyan~\cite{MR682486}, Ol'shanskii~\cite{0036-0279-35-4-L13}, Ol'shanskii-Sapir~\cite{finpresnonam}) to be such using the co-growth criterion. See Grigorchuk~\cite{Gri77} for the announcement of the criterion, or~\cite{grigcogreng} for a full proof. For other proofs, see Cohen~\cite{cogr1}, Szwarc~\cite{cogr3}. The criterion is closely related to Kesten's criterion in terms of probability of return to the origin~\cite{kesten}.

Monod constructs in \cite{h-main} a class of groups of piecewise projective homeomorphisms $H(A)$ (where $A$ is a subring of $\R$). By comparing the action of $H(A)$ on the projective line $\P(\R)$ with that of $PSL_2(A)$, he proves that it is non-amenable for $A\neq\Z$ and without free subgroups for all $A$. This can be used to obtain non-amenable subgroups with additional properties. In particular, Lodha~\cite{Lodha2014} proves that a certain subgroup of $H(\Z[\frac{\sqrt{2}}{2}])$ is of type $F_\infty$ (in other words, such that there is a connected CW complex $X$ which is aspherical and has finitely many cells in each dimension such that $\pi_1(X)$ is isomorphic to the group). That subgroup was constructed earlier by Moore and Lodha~\cite{lhodafinpres} as an example of a group that is non-amenable, without free subgroup and finitely presented. It has three generators and only $9$ defining relations (compare to the previous example by Ol'shanskii-Sapir~\cite{finpresnonam} with $10^{200}$ relations). This subgroup is the first example of a group of type $F_\infty$ that is non-amenable and without a free subgroup. Later, Lodha~\cite{lhoda-tarski} also proves that the Tarski numbers (the minimal number of pieces needed for a paradoxical decomposition) of all the groups of piecewise projective homeomorphisms are bounded by $25$.

It is not known whether the group $H(\Z)$ of piecewise projective homeomorphisms in the case $A=\Z$ defined by Monod is amenable. One of the equivalent conditions for amenability is the existence of a non-degenerate measure with trivial Poisson boundary (see Kaimanovich-Vershik~\cite{kaimpoisson}, Rosenblatt~\cite{rosenblatt}). This measure can be chosen to be symmetric. It is also known that amenable groups can have measures with non-trivial boundary. In a recent result Frisch-Hartman-Tamuz-Vahidi-Ferdowski~\cite{choquet-deny} describe an algebraic necessary and sufficient condition for a group to admit a measure with non-trivial boundary. In the present paper we give sufficient conditions for non-triviality of the Poisson boundary on $H(\Z)$. There are several equivalent ways to define the Poisson boundary (see Kaimanovich-Vershik~\cite{kaimpoisson}). Consider a measure $\mu$ on a group $G$ and the random walk it induces by multiplication on the left. It determines an associated Markov measure $P$ on the trajectory space $G^\N$.

\begin{defi}\label{poisson}
Consider the following equivalence relation on $G^\N$: two trajectories $(x_0,x_1,\dots)$ and $(y_0,y_1,\dots)$ are equivalent if and only if there exist $i_0\in\N$ and $k\in\Z$ such that for every $i>i_0$ $x_i=y_{i+k}$. In other words, if the trajectories coincide after a certain time instant up to a time shift. The \textit{Poisson boundary} (also called \textit{Poisson-Furstenberg boundary}) of $\mu$ on $G$ is the quotient of $(G^\N,P)$ by the measurable hull of this equivalence relation.
\end{defi}

Note that if the support of the measure does not generate $G$, in which case we say that the measure is \textit{degenerate}, this defines the boundary on the subgroup generated by the support of the measure rather than on $G$. For a more recent survey on results concerning the Poisson boundary, see~\cite{Erschler2010}.

Kim, Koberda and Lodha have shown in~\cite{chainhomeo} that $H(\Z)$ contains Thompson's group $F$ as a subgroup. This group is the group of orientation-preserving piecewise linear self-isomorphisms of the closed unit interval with dyadic slopes, with a finite number of break points, all break points being dyadic numbers (see Cannon-Floyd-Perry~\cite{thomsoncfp} or Meier's book~\cite[Ch.~10]{meier} for details and properties). It is not known whether it is amenable, which is a celebrated open question. Kaimanovich~\cite{kaimanovichthompson} and Mishchenko~\cite{mischenko2015} prove that the Poisson boundary on $F$ is not trivial for finitely supported non-degenerate measures. They study the induced walk on the dyadic numbers in their proofs. However, there exist non-degenerate symmetric measures on $F$ for which the induced walk has trivial boundary as proven by Juschenko and Zheng~\cite{juszheng}. The results of the current article are inspired by the paper of Kaimanovich. It is not hard to prove that $H(\Z)$ is not finitely generated (see Remark~\ref{fingen}), so we will consider measures the support of which is not necessarily finite.

Our main result is as follows. Consider the group $H(\Z)$ of piecewise projective homeomorphisms, as defined by Monod~\cite{h-main}, in the case $A=\Z$. For $g\in H(\Z)$ denote by $Br(g)$ the number of \textit{break points} of $g$, which is the ends of pieces in its piecewise definition. We will say that a measure $\mu$ on a subgroup of $H(\Z)$ has \textit{finite first break moment} if the expected number of break points $\mathbb{E}[Br]$ is finite. A group $H$ is called \textit{locally solvable} if all finitely generated subgroups are solvable. Then

\begin{thm}\label{main}
For any subgroup $H$ of $H(\Z)$ which is not locally solvable and any measure $\mu$ on $H$ with finite first break moment $\mathbb{E}[Br]$ and such that the support of $\mu$ generates $H$ as a semigroup, the Poisson boundary of $(H,\mu)$ is non-trivial.
\end{thm}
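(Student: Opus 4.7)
Following the approach of Kaimanovich's proof for Thompson's group $F$, the plan is to construct a non-constant measurable $H$-equivariant map from the space of trajectories to $\mathbb{P}^1(\R)$. Concretely, I would fix a generic base point $x_0 \in \mathbb{P}^1(\R)$, run the left random walk $w_n = X_1 \cdots X_n$ on $H$ and argue that the orbit $w_n \cdot x_0$ converges almost surely to a random limit $x_\infty \in \mathbb{P}^1(\R)$ whose distribution is not a single Dirac mass. Any such non-trivial equivariant map factors through the Poisson boundary and witnesses its non-triviality.

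\textbf{Key steps.} First, since $H$ is not locally solvable it contains a finitely generated non-solvable subgroup $H_0$. Using Monod's description of $H(\Z)$ as piecewise $PSL_2(\Z)$ homeomorphisms, I would argue that non-solvability of $H_0$ forces two of its elements to act by M\"obius transformations on a common subinterval $I \subset \R$, generating a non-elementary subgroup of $PSL_2(\R)$. Second, I would use the finite first break moment: since the break point counts satisfy $Br(gh) \le Br(g) + Br(h)$, the expected number of break points of $w_n$ grows at most linearly in $n$, and a Borel--Cantelli type argument allows me to choose $x_0 \in I$ so that almost surely $x_0$ is not a break point of any $w_n$. On this event, the action of $w_n$ at $x_0$ is given by an honest product of M\"obius transformations, the sequence of which is determined by the piece of each $X_i$ that contains the current point of the orbit. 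Third, combining non-elementarity from step one with classical results on random walks on $PSL_2(\R)$, I expect to conclude that $w_n \cdot x_0$ converges almost surely to a random point of $\mathbb{P}^1(\R)$ whose distribution has no atoms.

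\textbf{Main obstacle.} The principal difficulty is that the induced walk at $x_0$ is not a classical random walk on $PSL_2(\R)$: the M\"obius piece of $X_{i+1}$ acting on $w_i \cdot x_0$ depends on the position of $w_i \cdot x_0$, so the increments of the induced projective walk are not i.i.d., and the standard Furstenberg--Kesten boundary convergence theorems do not apply directly. The bulk of the technical work will be to couple this position-dependent walk with an honest i.i.d.\ random walk on a non-elementary subgroup of $PSL_2(\R)$, using the finite first break moment both to ensure that the coupling succeeds with positive probability and to guarantee that the limit distribution produced is non-degenerate.
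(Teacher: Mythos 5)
Your proposal breaks down at the very first key step, and the breakdown is not a technical obstacle but a conceptual one. You claim that non-solvability of a finitely generated subgroup $H_0 \le H(\Z)$ forces two of its elements to restrict to M\"obius transformations on a common subinterval $I$ which generate a non-elementary subgroup of $PSL_2(\R)$. This is false, and indeed the entire point of Monod's construction is the opposite: $H(\Z)$ (and every $H(A)$) contains \emph{no} free subgroup, hence no non-elementary subgroup of $PSL_2(\R)$, irrespective of solvability. Thompson's group $F$ embeds in $H(\Z)$, is non-solvable, and must be covered by the theorem, yet it contains no free subgroup. The concrete reason your reduction fails is that even if $f$ and $g$ agree with M\"obius maps $M_f,M_g$ on $I$, the composition $g\circ f$ need not agree with $M_g M_f$ on $I$: the image $f(I)$ may leave $I$ and land in a region where $g$ is governed by a different piece. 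The "main obstacle" you flag --- that the increments of the induced projective walk are not i.i.d.\ --- is a symptom of this deeper phenomenon, not a coupling problem; the sequence of M\"obius pieces applied along the orbit has no reason to stay inside (or even repeatedly visit) any fixed non-elementary subgroup, so there is nothing to couple to. I would also note that Kaimanovich's proof for $F$, which you cite as your model, does \emph{not} run a projective boundary argument; it tracks the induced walk on the dyadic orbit and the associated slope-jump data, which is closer in spirit to the mechanism actually used here.

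The paper avoids $\mathbb{P}^1(\R)$ entirely. To each $g$ it attaches a configuration $C_g\colon Gs\to\Z$ recording the logarithmic slope jump at each point of the orbit of a fixed $s\in P_\Z$, and the cocycle identity $C_{g_2 g_1} = C_{g_1} + S^{g_1}C_{g_2}$ turns the random walk on $H$ into a random walk on configurations. Lemma~\ref{ltwo} shows that finite first break moment plus transience of the induced walk on the Schreier graph $Gs$ forces a.s.\ pointwise convergence of $C_{g_n}$; transience is supplied by Lemma~\ref{algtho} (a tree-comparison argument in the spirit of the Thompson Schreier graph), and the limit cannot be deterministic because some group element fixes $s$ while changing slope there. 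The algebraic dichotomy that replaces your non-elementarity claim is Lemma~\ref{algthree}: a finitely generated subgroup of $\G$ either contains a pair $f,g$ (with suitable $b,c,s$) feeding Lemma~\ref{algone}, or is solvable. If you want to keep a boundary-map flavour, the object to aim for is the space of limit configurations $\Z^{Gs}$, not $\mathbb{P}^1(\R)$.
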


For a measure $\mu$ on a finitely generated group, we say that $\mu$ has \textit{finite first moment} if the word length over any finite generating set has finite first moment with respect to $\mu$. This is well defined as word lengths over different finite generating sets are bilipschitz, and in particular the finiteness of the first moment does not depend on the choice of generating set. We remark (see Remark~\ref{brfin}) that any measure $\mu$ on a finitely generated subgroup $H$ of $H(\Z)$ that has finite first moment also has finite expected number of break points. Therefore by Theorem~\ref{main} if $\mu$ is a measure on a non-solvable finitely generated subgroup $H$ such that the support of $\mu$ generates $H$ as a semigroup and $\mu$ has finite first moment, the Poisson boundary of $(H,\mu)$ is non-trivial. Furthermore, in the other case we will show (Lemma~\ref{mineps}) that so long as $H$ is not abelian, we can construct a symmetric non-degenerate measure with finite $1-\varepsilon$ moment and non-trivial Poisson boundary.

The structure of the paper is as follows. In Section~\ref{prelim}, given a fixed $s\in\R$, to every element $g\in H(\Z)$ we associate (see Definition~\ref{confdef}) a configuration $C_g$. Each configuration is a function from the orbit of $s$ into $\Z$. The value of a configuration $C_g$ at a given point of the orbit of $s$ represents the slope change at that point of the element $g$ to which it is associated. There is a natural quotient map of the boundary on the group into the boundary on the configuration space. The central idea of the paper is to show that under certain conditions, the value of the configuration at a given point of the orbit of $s$ almost always stabilises. If that value is not fixed, this then implies non-triviality of the boundary on the configuration space, and thus non-triviality of the Poisson boundary on the group. These arguments bear resemblance to Kaimanovich's article on Thompson's group~\cite{kaimanovichthompson}, but we would like to point out that the action on $\R$ considered in the present article is different.

In Section~\ref{sectfour} we obtain the first result for non-triviality of the Poisson boundary (see Lemma~\ref{constr}). Measures satisfying the assumptions of that lemma do not necessarily have finite first break moment. In Section~\ref{thompsect} we study copies of Thompson's group $F$ in $H(\Z)$. Building on the results from it, in Section~\ref{schreier} we obtain transience results (see Lemma~\ref{algtho}) which we will need to prove Theorem~\ref{main}. In Section~\ref{anothersuff} we prove Lemma~\ref{ltwo} which is the main tool for proving non-triviality of the Poisson boundary. In the particular case of Thompson's group, the lemma already allows us to answer a question by Kaimanovich~\cite[7.A]{kaimanovichthompson}:

\begin{cor}\label{finfirstthomp}
Any measure on Thompson's group $F$ that has finite first moment and the support of which generates $F$ as a semi-group has non-trivial Poisson boundary.
\end{cor}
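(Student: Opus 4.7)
The plan is to realise Thompson's group $F$ as a subgroup of $H(\Z)$ via the embedding of Kim, Koberda and Lodha, and then to deduce the corollary by combining Remark \ref{brfin} with Lemma \ref{ltwo}. Concretely, since $F$ is finitely generated (by two elements) and now sits inside $H(\Z)$, any measure $\mu$ on $F$ with finite first moment has finite first moment with respect to some word length on $F$, and hence, by Remark \ref{brfin}, satisfies $\mathbb{E}[Br]<\infty$ when viewed as a measure on $H(\Z)$. This is the moment input needed to activate the configuration machinery of Section \ref{prelim}.

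Next, I would choose a basepoint $s\in\R$ lying in a well-understood $F$-orbit, following the spirit of the dyadic basepoint used by Kaimanovich in \cite{kaimanovichthompson}. The analysis of Section \ref{thompsect} on copies of $F$ inside $H(\Z)$ should supply such an $s$ at which the configurations $C_g$ of Definition \ref{confdef} record non-trivial slope-change data for elements $g\in F$. Applying Lemma \ref{ltwo} to this $s$, the finiteness of $\mathbb{E}[Br]$ then forces the integer-valued random variable $C_{x_n}(s)$ to stabilise almost surely along the random walk, producing via the natural quotient map a measurable factor of the Poisson boundary of $(F,\mu)$. The advantage of the Thompson case is that this structural input is available by hand from the explicit dyadic action, without needing the general transience result Lemma \ref{algtho} developed for arbitrary non-locally-solvable subgroups.

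The main obstacle will be verifying that the boundary factor so obtained is genuinely non-trivial, i.e.\ that the stabilised limit $\lim_n C_{x_n}(s)$ is not almost surely a single value. This is precisely where the hypothesis that $\supp\mu$ generates $F$ as a semigroup enters: it forces products along the random walk to exhibit enough distinct slope changes at points close to $s$ that the limiting value is a non-constant random variable, and hence the Poisson boundary of $(F,\mu)$ is non-trivial. Once this step is settled, the corollary follows immediately.
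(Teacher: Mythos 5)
Your proposal follows the paper's own route: realise $F$ inside $H(\Z)$ via the $2$-chain action $\mathfrak{a}_s$ of Section~\ref{thompsect}, pass from finite first moment to finite first break moment via Remark~\ref{brfin}, obtain transience of the induced walk on the orbit of $s$ (the paper does this through Lemma~\ref{tree-old} and the comparison Lemma~\ref{var}), and combine via Lemma~\ref{ltwo}/Corollary~\ref{firstfin} to get almost-sure pointwise stabilisation of $C_{x_n}$; this is exactly the paper's proof. The only place you are vaguer than the paper is the final non-triviality step: rather than ``enough distinct slope changes at points close to $s$,'' the precise mechanism (Lemma~\ref{nostable}) is that the semigroup-generating hypothesis gives a positive-probability prefix of the walk realising a power of $h_s$, whose configuration is a nonzero multiple of $\delta_s$ (a slope change \emph{at} $s$, not merely near it), so a putative a.s.\ constant limit configuration $C$ would satisfy $C=C_{h_s}+S^{h_s}C$, which fails when evaluated at $s$.
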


We mention that the arguments of Lemma~\ref{ltwo} could also be applied for the action and configurations considered in Kaimanovich's article, giving an alternative proof of the corollary. Combining the lemma with the transience results from Section~\ref{schreier} we obtain non-triviality of the Poisson boundary under certain conditions (see Lemma~\ref{algone}), which we will use to prove the main result. As the negation of those conditions passes to subgroups, it suffices to show that if $H$ is finitely generated and does not satisfy them, it is then solvable, which we do in Section~\ref{algsec}. Remark that the theorem generalises the result of Corollary~\ref{finfirstthomp}. In Section~\ref{last} we give an additional remark on the case of finite $1-\varepsilon$ moment.

%
%
%
%
\section*{Acknowledgements}

I would like to offer my special thanks to my thesis director, Anna Erschler. Discussions with Laurent Bartholdi have been extremely helpful, and they inspired me to consider the case of finite first moment. I would also like to thank Vadim Kaimanovich for his remarks on the preliminary version of the paper. I am also grateful to Dmytro Savchuk for the question that lead to Remark~\ref{grapham}.

\section{Preliminaries}\label{prelim1}

\subsection{$\Pl$ and $H(\Z)$}

The projective linear group $PSL_2(\R)$ is defined as $SL_2(\R)/\{Id,-Id\}$, which is the natural quotient that describes the linear actions on the projective space $\P(\R)$. As the latter can be defined as $\mathbb{S}/(x\sim-x)$, we can think of it as a circle for understanding the dynamics of the action of the projective group. Remark that it is commonly understood as the boundary of the hyperbolic plane. In this paper we will not be interested in the interior of the hyperbolic plane as we do a piecewise definition of $H(A)$ on $\P(\R)$. An element $h\in PSL_2(\R)$ is called:

\begin{enumerate}
\item \textbf{Hyperbolic} if $|tr(h)|>2$ (or equivalently, $tr(h)^2-4>0$). In this case a calculation shows that $h$ has two fixed points in $\P(\R)$. One of the points is attractive and the other repulsive for the dynamic of $h$, meaning that starting from any point and multiplying by $h$ (respectively $h^{-1}$) we get closer to the attractive (resp. the repulsive) fixed point.
\item \textbf{Parabolic} if $|tr(h)|=2$. In this case $h$ has exactly one "double" fixed point. We can identify $\P(\R)$ with $\R\cup\{\infty\}$ in such a way that the fixed point is $\infty$, in which case $h$ becomes a translation on $\R$. We will go into detail about the identification below.
\item \textbf{Elliptic} if $|tr(h)|<2$. Then $h$ has no fixed points in $\P(\R)$ and is conjugate to a rotation. If we consider it as an element of $PSL_2(\C)$, we can see that it has two fixed points in $\P(\C)$ that are outside $\P(\R)$.
\end{enumerate}

Consider an element $\begin{pmatrix}x\\y\end{pmatrix}\in\R^2\setminus0$. If $y\neq 0$, identify it with $\frac{x}{y}$, otherwise with $\infty$. This clearly passes on $\P(\R)$, and the action of $PSL_2(\R)$ becomes $\begin{pmatrix}a&b\\c&d\end{pmatrix}. x=\frac{ax+b}{cx+d}$. The conventions for infinity are $\begin{pmatrix}a&b\\c&d\end{pmatrix}(\infty)=\frac{a}{c}$ if $c\neq0$ and $\infty$ otherwise, and if $c\neq 0$, $\begin{pmatrix}a&b\\c&d\end{pmatrix}.(-\frac{d}{c})=\infty$. Note that by conjugation we can choose any point to be the infinity.

Let us now look into the groups defined by Monod~\cite{h-main}. We define $\Gamma$ as the group of all homeomorphisms of $\R\cup\{\infty\}$ that are piecewise in $PSL_2(\R)$ with a finite number of pieces. Take a subring $A$ of $\R$. We define $\Gamma(A)$ to be the subgroup of $\Gamma$ the elements of which are piecewise in $PSL_2(A)$ and the extremities of the intervals are in $P_A$, the set of fixed points of hyperbolic elements of $PSL_2(A)$. 

\begin{defi}
	The group of piecewise projective homeomorphisms $H(A)$ is the subgroup of $\Gamma(A)$ formed by the elements that fix infinity.
\end{defi}

It can be thought of as a group of homeomorphisms of the real line, and we will use the same notation in both cases. We will note $G=H(\Z)$ to simplify. Note in particular that $\infty\notin P_\Z$. This means that the germs around $+\infty$ and $-\infty$ are the same for every element of $G$. The only elements in $\Pl$ that fix infinity are 

\begin{equation}\label{agrp}
\left\{\left(\alpha_n=\begin{pmatrix}1&n\\0&1\end{pmatrix}\right)_{n\in\Z}\right\}= G\cap \Pl.
\end{equation}

Fix $g\in G$ and let its germ at infinity (on either side) be $\alpha_n$. Then $g\alpha_{-n}$ has finite support. The set of elements $\bar{G}\subset G$ that have finite support is clearly a subgroup, and therefore if we denote $\A=\{\alpha_n,n\in\Z\}$, we have

$$G=\bar{G}+\A$$

For the purposes of this article, we also need to define:

\begin{defi}\label{tildeg}
Consider the elements of $\Gamma$ that fix infinity and are piecewise in $\Pl$. We call the group formed by those elements the \textit{piecewise $\Pl$ group}, and denote it as $\G$.
\end{defi}

Remark that in an extremity $\gamma$ of the piecewise definition of an element $g\in\G$, the left and right germs $g(\gamma-0)$ and $g(\gamma+0)$ have a common fixed point. Then $g(\gamma+0)^{-1}g(\gamma-0)\in \Pl$ fixes $\gamma$. Therefore the extremities are in $P_\Z\cup\Q\cup\{\infty\}$, that is in the set of fixed points of any (not necessarily hyperbolic) elements of $\Pl$. In other words, the only difference between $\G$ and $G=H(\Z)$ is that $\G$ is allowed to have break points in $\Q\cup\{\infty\}$, that is in the set of fixed points of parabolic elements. Clearly, $G\leq\G$. This allows us to restrain elements, which we will need in Section~\ref{algsec}:

\begin{defi}\label{restr}
	Let $f\in\G$, and $a,b\in\R$ such that $f(a)=a$ and $f(b)=b$. The function $f\restriction_{(a,b)}$ defined by $f\restriction_{(a,b)}(x)=f(x)$ for $x\in(a,b)$ and $f(x)=x$ otherwise is called a restriction.
\end{defi}

Remark that $f\restriction_{(a,b)}\in\G$. The idea of this definition is that we extend the restrained function with the identity function to obtain an element of $\G$.

The subject of this paper is $G$, however in order to be able to apply results from previous sections in Section~\ref{algsec}, we will prove several lemma for $\G$. The equivalent result will easily follow for $G$ just from the fact that it is a subgroup.

\subsection{Random walks}

Throughout this article, for a measure $\mu$ on a group $H$ we will consider the random walk by multiplication on the left. That is the walk $(x_n)_{n\in\N}$ where $x_{n+1}=y_nx_n$ and the increments $y_n$ are sampled by $\mu$. In other words, it is the random walk defined by the kernel $p(x,y)=yx^{-1}$. Remark that for walks on groups it is standard to consider the walk by multiplications on the right. In this article the group elements are homeomorphisms on $\R$ and as such they have a natural action on the left on elements of $\R$, which is $(f,x)\mapsto f(x)$.

We will use Definition~\ref{poisson} as the definition of Poisson boundary. For completeness' sake we also mention its description in terms of harmonic functions. For a group $H$ and a probability measure $\mu$ on $H$ we say that a function $f$ on $H$ is \textit{harmonic} if for every $g\in H$, $f(g)=\sum_{h\in H}f(hg)\mu(h)$. For a non-degenerate measure, the $L^\infty$ space on the Poisson boundary is isomorphic to the space of bounded harmonic functions on $H$, and the exact form of that isomorphism is given by a classical result called the \textit{Poisson formula}. In particular, non-triviality of the Poisson boundary is equivalent to the existence of non-trivial bounded harmonic functions.

%

We recall the entropy criterion for triviality of the Poisson boundary.

\begin{defi}
	Consider two measures $\mu$ and $\lambda$ on a discrete group $H$. We denote $\mu*\lambda$ their \textit{convolution}, defined as the image of their product by the multiplication function. Specifically:
	
	$$\mu*\lambda(A)=\int\mu(Ah^{-1})d\lambda(h).$$
\end{defi}

Remark that $\mu^{*n}$ gives the probability distribution for $n$ steps of the walk, starting at the neutral element. For a probability measure $\mu$ on a countable group $H$ we denote $H(\mu)$ its \textit{entropy}, defined by

$$H(\mu)=\sum_{g\in H}-\mu(g)\log{\mu(g)}.$$

One of the main properties of entropy is that the entropy of a product of measures is not greater than the sum of their entropies. Combining that with the fact that taking image of a measure by a function does not increase its entropy, we obtain $H(\mu*\lambda)\leq H(\mu) +H(\lambda)$. Avez~\cite{avez72} introduces the following definition:

\begin{defi}
	The \textit{entropy of random walk} (also called \textit{asymptomatic entropy}) of a measure $\mu$ on a group $H$ is defined as $\lim_{n\rightarrow\infty}\frac{H(\mu^{*n})}{n}$.
\end{defi}

\begin{thm}[Entropy Criterion (Kaimanovich-Vershik~\cite{kaimpoisson}, Derriennic~\cite{derast})]\label{entropy}
Let $H$ be a countable group and $\mu$ a non-degenerate probability measure on $H$ with finite entropy. Then the Poisson boundary of $(H,\mu)$ is trivial if and only if the asymptotic entropy of $\mu$ is equal to zero.
\end{thm}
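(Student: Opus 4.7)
The plan is to identify Poisson boundary triviality with triviality of the tail $\sigma$-algebra $\mathcal{T}=\bigcap_{n\geq 0}\sigma(X_n,X_{n+1},\ldots)$ of the random walk $(X_n)_{n\geq 0}$ starting at the neutral element, which follows from Definition~\ref{poisson} together with the standard identification, for random walks on groups with non-degenerate increments, between the invariant $\sigma$-algebra of the measurable hull and $\mathcal{T}$ up to $P$-null sets. The whole proof will then hinge on establishing the identity
$$h(\mu)\;=\;H(\mu)-H(X_1\mid\mathcal{T})\;=\;I(X_1;\mathcal{T}),$$
from which both directions fall out: $h(\mu)=0$ is equivalent to $X_1$ being independent of $\mathcal{T}$, which is in turn equivalent to $P$-triviality of $\mathcal{T}$.

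To produce this identity I would first compute $H(X_1\mid X_n)$ by decomposing $H(X_1,X_n)$ in two ways. Since $X_n$ can be written as $Z_nX_1$ with $Z_n$ independent of $X_1$ and distributed as $\mu^{*(n-1)}$, one gets $H(X_n\mid X_1)=H(\mu^{*(n-1)})$, and the chain rule then yields
$$H(X_1\mid X_n)\;=\;H(\mu)+H(\mu^{*(n-1)})-H(\mu^{*n}),$$
equivalently $H(\mu^{*n})-H(\mu^{*(n-1)})=H(\mu)-H(X_1\mid X_n)$. The data processing inequality, applied to the Markov chain $X_1\to X_{n-1}\to X_n$, shows that $I(X_1;X_n)$ is non-increasing in $n$, so these differences are non-negative and non-increasing. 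A sequence with these properties whose Ces\`aro averages converge to $h(\mu)$ must itself converge to $h(\mu)$.

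Next, I would use the Markov property to rewrite $H(X_1\mid X_n,X_{n+1},\ldots)=H(X_1\mid X_n)$, and pass to the limit $n\to\infty$ via reverse-martingale convergence for the conditional probabilities $P(X_1=g\mid X_n,X_{n+1},\ldots)$. Since $H(\mu)<\infty$, the function $p\mapsto-p\log p$ together with Fatou/dominated convergence upgrades pointwise convergence of these probabilities to convergence of entropies, giving $H(X_1\mid X_n)\to H(X_1\mid\mathcal{T})$. Combining with the previous step yields the displayed identity.

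The $\Leftarrow$ direction is then immediate. For $\Rightarrow$, from $h(\mu)=0$ I would get that $X_1$ is independent of $\mathcal{T}$; applying the same argument to the shifted walks $(X_{k+\cdot})_{\cdot}$ would show that each $X_k$ is independent of $\mathcal{T}$, and since $\mathcal{T}\subset\sigma(X_1,X_2,\ldots)$, this forces $\mathcal{T}$ to be independent of itself, hence $P$-trivial. The main obstacle I expect is the rigorous justification of the entropy limit $H(X_1\mid X_n)\to H(X_1\mid\mathcal{T})$ in the countable but potentially unbounded-support setting, and carrying out the final bootstrap from the independence of a single variable to full triviality of the tail.
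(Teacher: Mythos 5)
The paper does not prove this statement; it is quoted as an external result from Kaimanovich--Vershik and Derriennic, so there is no internal proof to compare against. Judged on its own merits, your proposal follows the standard entropy-criterion argument: computing $H(X_1\mid X_n)$ by the two chain-rule decompositions, using independence of $X_1$ from the increment product $Z_n$ to get $H(X_n\mid X_1)=H(\mu^{*(n-1)})$, invoking monotonicity of $I(X_1;X_n)$ via data processing, passing from conditioning on $X_n$ to conditioning on $\sigma(X_n,X_{n+1},\dots)$ by the Markov property, and finally taking $n\to\infty$ via reverse martingales. All of that is sound, and the monotonicity-plus-Fatou route to $H(X_1\mid X_n)\to H(X_1\mid\mathcal{T})$ is the right way to handle the unbounded countable setting under $H(\mu)<\infty$.

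The gap is in the bootstrap for the forward direction. From $h(\mu)=0$ you correctly get $I(X_1;\mathcal{T})=0$, i.e.\ $X_1\perp\mathcal{T}$. But your next step --- derive that \emph{each} $X_k$ is individually independent of $\mathcal{T}$ (via shifted walks) and then conclude $\mathcal{T}$ is independent of itself because $\mathcal{T}\subset\sigma(X_1,X_2,\dots)$ --- does not work: pairwise independence of each $X_k$ with $\mathcal{T}$ does not imply $\sigma(X_1,X_2,\dots)\perp\mathcal{T}$. (Compare: if $\mathcal{T}=\sigma(X_1\oplus X_2)$ for iid fair coins, then $X_1\perp\mathcal{T}$ and $X_2\perp\mathcal{T}$ yet $\mathcal{T}$ is nontrivial and contained in $\sigma(X_1,X_2)$.) Moreover ``apply the argument to the shifted walk $(X_{k+\cdot})$'' is itself ill-posed: that walk does not start at the identity, and what you would really show this way is that the \emph{increment} $X_{k+1}X_k^{-1}$, not $X_k$, is independent of $\mathcal{T}$.

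The correct fix is to redo the identity for the full block. For $n>k$ one has $H\bigl((X_1,\dots,X_k)\mid X_n\bigr)=kH(\mu)+H(\mu^{*(n-k)})-H(\mu^{*n})$, because $X_n=Z X_k$ with $Z\sim\mu^{*(n-k)}$ independent of $(X_1,\dots,X_k)$, and $H(X_1,\dots,X_k)=kH(\mu)$ by telescoping the chain rule with independent increments. Letting $n\to\infty$ as before gives
\[
I\bigl((X_1,\dots,X_k);\mathcal{T}\bigr)=\lim_{n\to\infty}\sum_{j=n-k+1}^{n}\bigl(H(\mu^{*j})-H(\mu^{*(j-1)})\bigr)=k\,h(\mu).
\]
Hence $h(\mu)=0$ forces $(X_1,\dots,X_k)\perp\mathcal{T}$ jointly for every $k$, so $\sigma(X_1,X_2,\dots)\perp\mathcal{T}$, and since $\mathcal{T}\subset\sigma(X_1,X_2,\dots)$ (mod null), $\mathcal{T}$ is independent of itself and therefore $P$-trivial. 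This replaces the flawed bootstrap and completes the argument along the lines you intended.
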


\section{Some properties of groups of piecewise projective homeomorphisms}\label{prelim}

In Subsection~\ref{slopechange} we study $P_\Z$ and the group action locally around points of it. In Subsection~\ref{confsect}, using the results from the first subsection, to each element $g\in\G$ we associate a configuration $C_g$. We then also describe how to construct an element with a specific associated configuration.

\subsection{Slope change points in $G=H(\Z)$}\label{slopechange}

Let $g$ be a hyperbolic element of $\Pl$. Let it be represented by $\begin{pmatrix}a&b\\c&d\end{pmatrix}$ and denote $tr(g)=a+d$ its trace. Then its fixed points are $\frac{d-a\pm\sqrt{tr(g)^2-4}}{c}$. As the trace is integer and greater than $2$ in absolute value, this number is never rational. Furthermore, it is worth noting that $\Q(\sqrt{tr(g)^2-4})$ is stable by $\Pl$ and therefore by $\G$ (and $G$). If we enumerate all prime numbers as $(p_i)_{i\in\N}$, we have, for $I\neq J\subset\N$ finite, $\Q(\sqrt{\prod_{i\in I}p_i})\cap\Q(\sqrt{\prod_{i\in J}p_i})=\Q$. We just mentioned that $P_\Z\cap\Q=\emptyset$ so we have

$$P_\Z=\bigsqcup_{I\subset\N\mbox{ finite}}\left(P_\Z\bigcap\Q\left(\sqrt{\prod_{i\in I}p_i}\right)\right)$$
where each set in the decomposition is stable by $\G$. Note also that the fixed points of parabolic elements of $\Pl$ are rational. This actually completely characterizes the set $P_\Z$, as we will now show that $P_\Z\bigcap\Q\left(\sqrt{\prod_{i\in I}p_i}\right)=\Q\left(\sqrt{\prod_{i\in I}p_i}\right)\setminus\Q$:

\begin{lemma}\label{all}
	Take any $s\in\Q(\sqrt{k})\setminus\Q$ for some $k\in\N$. Then $s\in P_\Z$.
\end{lemma}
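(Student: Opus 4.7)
The plan is to realize $s$ as a fixed point of an integer matrix by reducing to Pell's equation. Since $s \in \Q(\sqrt{k}) \setminus \Q$, $s$ is a quadratic irrational over $\Q$ and hence a root of a primitive polynomial $AX^2 + BX + C$ with $A,B,C \in \Z$, $A>0$, and discriminant $D = B^2 - 4AC > 0$ not a perfect square. I would first observe that a matrix $h = \begin{pmatrix}a&b\\c&d\end{pmatrix} \in SL_2(\Z)$ fixes $s$ iff $cs^2 + (d-a)s - b = 0$; since the minimal polynomial of $s$ over $\Z$ is $AX^2 + BX + C$ up to scalar, this is equivalent to the existence of $n \in \Z$ with
\[
c = nA, \qquad d - a = nB, \qquad b = -nC.
\]

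The next step is to impose $\det h = 1$. Substituting gives $a(a + nB) + n^2 AC = 1$, i.e.\ $a$ satisfies $a^2 + nBa + (n^2AC - 1) = 0$. This has an integer solution precisely when the discriminant $n^2(B^2 - 4AC) + 4 = n^2 D + 4$ is the square of an integer $t$ of the right parity, in which case $a = \tfrac12(-nB \pm t)$ and the trace of $h$ is $a + d = 2a + nB = \pm t$. So the whole problem reduces to solving the Pell-type equation
\[
t^2 - Dn^2 = 4
\]
in integers with $n \neq 0$, together with the parity condition that $t \equiv nB \pmod 2$.

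I would then invoke the classical theory of Pell's equation: since $D > 0$ and $D$ is not a square, the equation $x^2 - Dy^2 = 1$ has infinitely many integer solutions $(x,y)$ with arbitrarily large $x$, and each gives a solution $(t,n) = (2x,2y)$ of $t^2 - Dn^2 = 4$. For any such nontrivial solution one automatically has $t^2 = Dn^2 + 4 \geq D + 4 > 4$, so $|t| > 2$, i.e.\ the resulting element of $PSL_2(\Z)$ is hyperbolic. The parity condition $t \equiv nB \pmod 2$ follows by a short case analysis using that any integer discriminant satisfies $D \equiv 0$ or $1 \pmod 4$: when $D$ is even, $B$ is even and $t$ is even from $t^2 \equiv 4 \pmod 4$, so both sides are even; when $D$ is odd, reducing $t^2 = Dn^2 + 4$ modulo $2$ directly forces $t \equiv n \pmod 2$, hence $t \equiv nB \pmod 2$ since $B$ is odd.

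The main obstacle is packaging the Pell input cleanly and handling the parity so that one lands in $PSL_2(\Z)$ rather than in a larger order. Once that is checked, the matrix $h = \begin{pmatrix} (t - nB)/2 & -nC \\ nA & (t + nB)/2 \end{pmatrix}$ lies in $PSL_2(\Z)$, fixes $s$, and is hyperbolic since $|\operatorname{tr} h| = |t| > 2$, which gives $s \in P_\Z$ as required.
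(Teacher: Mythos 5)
Your proof is correct, and it arrives at the same place as the paper --- via Pell's equation --- but it packages the reduction more systematically. The paper fixes a representation $s = \frac{p}{q} + \frac{p'}{q'}\sqrt{k}$, applies Pell to the auxiliary integer $(p'q'q^2)^2 k$ to force divisibility, and then writes down an explicit matrix $\begin{pmatrix}x+q'^2pqa & b \\ q'^2q^2a & x-q'^2pqa\end{pmatrix}$ whose fixed-point and determinant conditions are checked by hand. You instead start from the primitive minimal polynomial $AX^2+BX+C$ of $s$, observe (correctly, via primitivity/Gauss) that $h \in SL_2(\Z)$ fixes $s$ exactly when $(c,\,d-a,\,-b) = n(A,B,C)$ for some $n\in\Z$, and then the determinant condition becomes the single equation $t^2 - Dn^2 = 4$ with a parity constraint. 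This is the classical parametrization of the stabiliser by units of the quadratic order of discriminant $D$; it is cleaner, requires no ad hoc scaling, and has the side benefit of making Lemma~2.4 (cyclicity of $St_s$) transparent from the same computation, whereas the paper proves that separately by conjugating to a diagonal form. Your parity analysis is sound (and in fact overkill, since you take $(t,n)=(2x,2y)$, which makes both sides of $t\equiv nB \pmod 2$ even automatically), and the nontriviality $|t|>2$ and $n\neq 0$ follow from Pell having nontrivial solutions when $D>0$ is not a square, which you correctly deduce from $s$ being a real irrational. Both arguments are essentially Pell dressed differently; yours is the more structural presentation.
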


Remark that $k$ is not an exact square, as $\Q(\sqrt{k})\setminus\Q$ has to be non-empty.

\begin{proof}
Note first that to have $\sqrt{tr^2-4}\in\Q(\sqrt{k})$ for some matrix it suffices to find integers $x\geq 2$ and $y$ such that $x^2-ky^2=1$. Indeed, any matrix with trace $2x$ will then satisfy this, for example $\begin{pmatrix}x&x^2-1\\1&x\end{pmatrix}$. This is known as Pell's equality, and has infinitely many solutions for any $k$ that is not a square (see Mordell's book~\cite[Ch.~8]{mordell}).

Write $s=\frac{p}{q}+\frac{p'}{q'}\sqrt{k}$ for some integers $p,q,p',q'$. Applying Pell's equality for $(p'q'q^2)^2k$, we obtain integers $x$ and $a$ such that $x^2-a^2(p'q'q^2)^2k=1$. In other words, $x^2-y^2k=1$ for $y=p'q'q^2a$. We construct $\begin{pmatrix}x+q'^2pqa&b\\q'^2q^2a&x-q'^2pqa\end{pmatrix}$ where $b=\frac{x^2-q'^4p^2q^2a^2-1}{q'^2q^2a}=p'^2q^2ak-q'^2p^2a\in a\Z$. The matrix has $s$ for a fixed point, and $s$ is not rational, therefore the matrix is a hyperbolic element of $\Pl$.
%
%
\end{proof}

\begin{remark}\label{fingen}
	The break points a finite number of elements of $H(\Z)$ are all contained in the sets $\Q(\sqrt{k})$ for a finite number of $k$, so Lemma~\ref{all} implies that $H(\Z)$ is not finitely generated.
\end{remark}

In order to define configurations, we wish to study the slope changes at elements of $P_\Z$. Consider $g\in\G$ and $s\in P_\Z$ such that $g(s+0)\neq g(s-0)$. Then it is easy to see that $f=g(\gamma-0)^{-1}g(\gamma+0)\in \Pl$ fixes $s$. Therefore, in order to study the slope changes we need to understand the stabiliser of $s$ in $\Pl$. We prove:

\begin{lemma}\label{cyclic}
	Fix $s\in\P(\R)$. The stabiliser $St_s$ of $s$ in $\Pl$ is either isomorphic to $\Z$ or trivial.
\end{lemma}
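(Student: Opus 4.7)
The plan is to conjugate within $PSL_2(\R)$ so that $s$ becomes $\infty$, reducing the statement to a classification of discrete subgroups of the affine group of the line.

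Concretely, choose $g_0 \in PSL_2(\R)$ with $g_0(s) = \infty$, and set $\Gamma = g_0 \Pl g_0^{-1}$. Since $\Pl$ is discrete in $PSL_2(\R)$ and conjugation is a homeomorphism, $\Gamma$ is also discrete, and $g_0 St_s g_0^{-1} = \Gamma \cap St^{\R}_\infty$, where $St^{\R}_\infty$ denotes the stabiliser of $\infty$ in $PSL_2(\R)$. This latter stabiliser consists of the classes of upper triangular matrices $\begin{pmatrix}a & b \\ 0 & 1/a\end{pmatrix}$ (with $a>0$ after the $PSL_2$ identification), acting on $\R$ by $x \mapsto a^2 x + ab$; it therefore identifies naturally with the orientation-preserving affine group $\mathrm{Aff}^+(\R) = \{x \mapsto \alpha x + \beta : \alpha > 0, \beta \in \R\}$. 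So it suffices to show that every discrete subgroup $H$ of $\mathrm{Aff}^+(\R)$ is trivial or isomorphic to $\Z$.

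Let $\pi \colon \mathrm{Aff}^+(\R) \to \R^*_+$ be the slope projection sending $x \mapsto \alpha x + \beta$ to $\alpha$. If $\pi(H) = \{1\}$, then $H$ is a discrete subgroup of the translation group $(\R, +)$, and so is trivial or infinite cyclic. Otherwise pick $h \in H$ with $\pi(h) = \alpha \neq 1$; I claim $\ker(\pi|_H) = \{1\}$. If not, $H$ would contain a nontrivial translation $\tau_t \colon x \mapsto x + t$, and then the elements $h^n \tau_t h^{-n} = \tau_{\alpha^n t}$ would be a sequence of distinct translations in $H$ converging to the identity (taking $n \to -\infty$ if $\alpha > 1$, or $n \to +\infty$ if $\alpha < 1$), contradicting discreteness. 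Hence $\pi|_H$ is injective, so $H$ embeds as a discrete subgroup of $\R^*_+ \cong \R$, which again is trivial or $\cong \Z$.

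There is no real obstacle here; the main care required is in the setup, namely choosing the conjugating element $g_0$, identifying $St^{\R}_\infty$ with $\mathrm{Aff}^+(\R)$, and checking that discreteness is preserved by conjugation. An alternative route would bypass the conjugation step via a direct \textit{derivative-at-$s$} homomorphism $St_s \to \R^*_+$ and the same accumulation argument, but it amounts to the same proof.
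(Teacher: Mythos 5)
Your proof is correct in spirit and takes a genuinely different route from the paper. The paper splits into three cases ($s \in P_\Z$, $s = \infty$, $s \in \Q$) and, in the quadratic-irrational case, invokes number theory: after conjugating so the two fixed points go to $0$ and $\infty$, the stabiliser is identified with solutions of $x^2 - ky^2 = 4$, which are powers of a fundamental unit by the classical theory of Pell's equation. You instead use discreteness of $\Pl$ in $PSL_2(\R)$ and reduce uniformly (no case split) to classifying discrete subgroups of $\mathrm{Aff}^+(\R)$. Your approach is conceptually cleaner and more robust; the paper's is more self-contained arithmetically and incidentally exhibits a canonical generator in terms of the fundamental unit.

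There is, however, a gap in your last step. From injectivity of $\pi|_H$ you conclude that ``$H$ embeds as a discrete subgroup of $\R^*_+$.'' Injectivity of a restriction does not in general give discreteness of the image: with $G = \R^2$, $H = \Z^2$ discrete, and $\pi(x,y) = x + y\sqrt{2}$, the map $\pi|_H$ is injective yet $\pi(H)$ is dense in $\R$. So you need an argument specific to $\mathrm{Aff}^+(\R)$. The cleanest fix: injectivity of $\pi|_H$ makes $H$ abstractly isomorphic to a subgroup of $\R^*_+$, hence abelian. Pick $h \in H$ with slope $\alpha \neq 1$; it has a unique fixed point $p \in \R$, and every element of $H$ commutes with $h$, hence preserves $\{p\}$, hence fixes $p$. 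After conjugating so $p = 0$, $H$ lies in the scaling subgroup $\{x \mapsto \alpha x : \alpha > 0\}$, which is closed in $\mathrm{Aff}^+(\R)$ and isomorphic to $\R^*_+$. Discreteness of $H$ in $\mathrm{Aff}^+(\R)$ together with closedness of this subgroup now does give discreteness of $H$ in $\R^*_+ \cong \R$, and you conclude as intended. With that one additional observation the proof is complete.
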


\begin{proof}
Assume that $St_s$ is not trivial, and let $f\in St_s$ be different from the identity. Clearly, $f$ is not elliptic. If $f$ is hyperbolic, $s\in P_\Z$, and if $f$ is parabolic, $s\in\Q\cup\{\infty\}$. We distinguish three cases, that is $s\in P_\Z$, $s=\infty$ and $s\in\Q$.
	
We first assume $s\in P_\Z$. Let $s=r+r'\sqrt{k}$ with $r,r'\in\Q$ and $k\in\Z$. Note that the calculations in the beginning of the section yield that for every element $f$ in $St_s$ that is not the identity, $f$ is hyperbolic and the other fixed point of $f$ is $\bar{s}=r-r'\sqrt{k}$. Let $i=\begin{pmatrix}\frac{1}{2}&-\frac{r+r'\sqrt{k}}{2}\\\frac{1}{r'\sqrt{k}}&1-\frac{r}{r'\sqrt{k}}\end{pmatrix}\in PSL_2(\R)$ and consider the conjugation of $St_s$ by $i$. By choice of $i$ we have that $i(s)=0$ and $i(\bar{s})=\infty$. Therefore the image of $St_s$ is a subgroup of the elements of $PSL_2(\R)$ that have zeros on the secondary diagonal. Furthermore, calculating the image of an example matrix $\begin{pmatrix}a&b\\c&d\end{pmatrix}$, for $tr=a+d$ the trace of the matrix, we get

\begin{equation}\label{cyc}
i\begin{pmatrix}a&b\\c&d\end{pmatrix}i^{-1}=\begin{pmatrix}\frac{\sqrt{tr^2-4}+tr}{2}&0\\0&\frac{\sqrt{tr^2-4}-tr}{2}\end{pmatrix}
\end{equation}

Thus to understand the image of $St_s$ we just need to study the elements of the form $\frac{x+y\sqrt{k}}{2}$ with $x^2-ky^2=4$. This appears in a generalized form of Pell's equation, and those elements are known~\cite[Ch.~8]{mordell} to be powers of a fundamental solution (which is also true for the classic Pell equation if you identify a solution $x^2-y^2k=1$ with a unit element $x+y\sqrt{k}$ in $\Z[\sqrt{k}]$). This proves that the image of $St_s$ by this conjugation, which is isomorphic to $St_s$, is a subgroup of a group isomorphic to $\Z$. $St_s$ is then also isomorphic to $\Z$. The matrix with the fundamental solution in the upper left corner defines a canonical generator for the group of elements of the form seen in (\ref{cyc}), and its smallest positive power in the image of $St_s$ defines a canonical generator for $St_s$.

Assume now $s=\infty$. As we described in (\ref{agrp}), the stabiliser of $\infty$ is $(\alpha_n)_{n\in\N}$, which is trivially isomorphic to $\Z$.

Lastly, assume that $s=\frac{p}{q}\in\Q$ with $p$ and $q$ co-prime. There exist $m$ and $n$ such that $pm+qn=1$. Then $i=\begin{pmatrix}m&n\\-q&p\end{pmatrix}\in \Pl$ verifies $i(s)=\infty$. Thus the conjugation by $i$ defines an injection from the subgroup that fixes $s$ into $St_\infty=\A$. We observe that non-trivial subgroups of $\Z$ are isomorphic to $\Z$, which concludes the proof.\end{proof}

Having an isomorphism between $St_s$ (for $s\in P_\Z$) and $\Z$ will be useful to us, so we wish to know its exact form. We prove:

\begin{lemma}\label{log}
Let $s\in P_\Z$. There exists $\phi_s\in\R^+$ that remains constant on the orbit $Gs$ of $s$ such that $f\mapsto\log_{\phi_s}(f'(s)))$ defines an isomorphism between $St_s$ and $\Z$.
\end{lemma}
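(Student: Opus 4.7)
The plan is to pick a generator $f_0$ of the cyclic group $St_s$ given by Lemma~\ref{cyclic}, define $\phi_s$ to be its derivative at $s$ (after normalizing so that this is greater than $1$), and then verify separately the isomorphism property and the orbit invariance.

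For the first step, from the proof of Lemma~\ref{cyclic} every non-identity element of $St_s$ is hyperbolic with fixed points $s$ and its Galois conjugate. For a matrix $\begin{pmatrix}a&b\\c&d\end{pmatrix}$ in $SL_2(\R)$ the derivative of the associated Möbius transformation is $(cx+d)^{-2}$, and a short computation shows that at a fixed point $p$ the quantity $cp+d$ equals one of the eigenvalues $\frac{tr \pm \sqrt{tr^2-4}}{2}$; for a hyperbolic $f_0$ both eigenvalues are real and of absolute value different from $1$, so $f_0'(s)$ is a positive real different from $1$. Replacing $f_0$ by $f_0^{-1}$ if necessary, I may assume $f_0'(s)>1$ and set $\phi_s:=f_0'(s)$. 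Every element of $St_s$ has the form $f_0^n$ for a unique $n\in\Z$, and since $f_0(s)=s$ the chain rule gives $(f_0^n)'(s)=\phi_s^n$, so $f\mapsto\log_{\phi_s}(f'(s))$ sends $f_0^n$ to $n$ and is the desired isomorphism $St_s\to\Z$.

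For the orbit invariance, let $s'=g(s)$ for some $g\in G$. Since $g$ has only finitely many break points, there is a one-sided neighbourhood $(s,s+\varepsilon)$ on which $g$ coincides with a single Möbius transformation $m\in \Pl$; by continuity, $m(s)=s'$. Conjugation by $m$ sends $St_s$ bijectively onto $St_{s'}$ (both directions clearly preserve fixing the target point and land in $\Pl$), so $mf_0m^{-1}$ is a generator of $St_{s'}$. The chain rule together with $(m^{-1})'(s')=1/m'(s)$ then yields
$$(mf_0m^{-1})'(s')=m'(s)\cdot f_0'(s)\cdot (m^{-1})'(s')=m'(s)\cdot\phi_s\cdot\frac{1}{m'(s)}=\phi_s,$$
so by our convention of choosing the generator whose derivative at the fixed point exceeds $1$, we obtain $\phi_{s'}=\phi_s$.

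The only mildly subtle point is that $g\in G$ is not itself a Möbius transformation, so one cannot directly conjugate $St_s$ by $g$; but since $g$ is piecewise $\Pl$ with finitely many break points, its germ at $s$ is an honest element of $\Pl$ sending $s$ to $s'$, and that is what makes the chain-rule argument go through.
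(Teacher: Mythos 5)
Your proof is correct and follows essentially the same route as the paper's: pick a generator of the cyclic stabiliser, observe that the derivative at the fixed point is multiplicative, and use a chain-rule/conjugation computation to show the derivative value is invariant along the orbit. The one small presentational difference is how you produce the element of $\Pl$ sending $s$ to $s'=g(s)$: the paper appeals to the fact that the $G$-orbit and the $\Pl$-orbit of $s$ coincide and picks $j\in\Pl$ directly, whereas you take the germ of $g$ to the right of $s$, which is automatically an honest element of $\Pl$ fixing the computation; both are valid, and yours has the minor virtue of not needing the orbit-coincidence fact. Your extra normalisation $\phi_s>1$ and the explicit verification that a hyperbolic element has positive derivative $\ne1$ at its fixed points are fine and make the uniqueness of $\phi_{s'}$ transparent.
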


\begin{proof}
The derivative on the fixed point is multiplicative. Therefore for a fixed $s$, this follows from Lemma~\ref{cyclic} and the fact that subgroups of $\Z$ are isomorphic to $\Z$ (or trivial, which is impossible here). What we need to prove is that $\phi$ remains constant on $Gs$. Fix $s$ and consider $s'\in Gs$. Let $j\in \Pl$ be such that $j(s)=s'$. Then the conjugation by $j$ defines a bijection between $St_s$ and $St_{s'}$. Calculating the derivative on an element $f\in St_s$ we get $(jfj^{-1})'(s')=j'(s)(j^{-1})'(j(s))f'(s)=f'(s)$, which proves the result.
\end{proof}

We further denote $\psi:\A\mapsto\Z$ (see \ref{agrp}) the map that associates $n$ to $\alpha_n$, and $\psi_r$ the conjugate map for any $r\in\Q$. Remark that this is well defined by Lemma~\ref{cyclic} and conjugations in $\Z$ being trivial. 

\subsection{Configurations}\label{confsect}

Fix $s\in P_\Z$ and let $\phi=\phi_s$ be given by Lemma~\ref{log}. By the isomorphism it defines, there exists an element $g_s$ that fixes $s$, such that $g_s'(s)=\phi_s$. As $s\notin\Q$, $g_s$ is hyperbolic. We associate to each element of the piecewise $\Pl$ group $\G$ (see Definition~\ref{tildeg}) a configuration representing the changes of slope at each point of the orbit $\G s=Gs$ of $s$, precisely:

\begin{defi}\label{confdef}To $g\in\G$ we assign $C_g:Gs\rightarrow\Z$ by

$$C_g(\gamma)=\log_\phi(g'(\gamma+0)g'(\gamma-0)^{-1}).$$
\end{defi}

Note that by choice of $\phi$ this value is well defined: indeed, $g(\gamma+0)g(\gamma-0)^{-1}\in \Pl$, fixes $\gamma$, and is therefore in $St_\gamma$.

Remark that by definition of $\G$ each configuration in the image of the association has a finite support. Remark also that the configuration ignores information about the changes in slope outside the orbit of $s$. For $s\in\Q$ we further denote $C_g(\gamma)=\psi_\gamma(g'(\gamma+0)g'(\gamma-0)^{-1})$, which will have similar properties. In the rest of the paper we will consider $s\in P_\Z$ unless otherwise specified. For completeness' sake, remark also that $G=H(\Z)\leq\G$ and the orbits of $G$ and $\G$ on $s$ are the same (as they are both the same as the orbit of $\Pl$) and therefore Definition~\ref{confdef} could be done directly for $G$, and what we would obtain is the same as restraining from the current definition.

\begin{lemma}\label{unit}\label{hs}
	For every $s\in P_\Z$, there exists an element $h_s\in G$ such that $h_s(s-0)^{-1}h_s(s+0)=g_s$ and all other slope changes of $h_s$ are outside $Gs$. In particular, $C_{h_s}=\delta_s$.
\end{lemma}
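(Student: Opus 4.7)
The plan is to construct $h_s$ as a four-piece piecewise element of $G$: the identity on two tails of $\R$, and two $\Pl$-pieces adjacent to $s$ whose ``difference'' at $s$ equals $g_s$. To get $C_{h_s}=\delta_s$, the three auxiliary break points must lie in $P_\Z\setminus Gs$. The arithmetic input is the decomposition $P_\Z=\bigsqcup_I\bigl(P_\Z\cap\Q(\sqrt{\prod_{i\in I}p_i})\bigr)$ with each summand $G$-stable, given before Lemma~\ref{all}: writing $s\in\Q(\sqrt{k_0})\setminus\Q$, the whole orbit $Gs$ lies in that one summand, so every irrational in $\Q(\sqrt{k})$ whose square class differs from that of $k_0$ automatically lies in $P_\Z\setminus Gs$.

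First I would look for $p\in\Pl$ hyperbolic together with $a<s<b$ in $P_\Z\setminus Gs$ such that $p(a)=a$ and $(pg_s)(b)=b$. Since a hyperbolic $q\in\Pl$ has fixed points in $\Q(\sqrt{\mathrm{tr}(q)^2-4})\setminus\Q$, it suffices that both $\mathrm{tr}(p)^2-4$ and $\mathrm{tr}(pg_s)^2-4$ have square-free part different from that of $k_0$, together with the correct positioning of fixed points relative to $s$. I would produce such $p$ by varying over the family $p=p_0\alpha_n$ for $n\in\Z$ and a fixed hyperbolic $p_0\in\Pl$ chosen so that both $\mathrm{tr}(p)$ and $\mathrm{tr}(pg_s)$ are non-constant linear functions of $n$; the latter uses that the lower-left entry of $g_s$ is nonzero, which follows from $g_s$ being hyperbolic and hence not fixing $\infty$. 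Each trace then takes infinitely many integer values, and only a sparse Pell-defined subset of them lies in the bad square class, so infinitely many $n$ satisfy the two arithmetic conditions simultaneously. For $|n|$ large, explicit computation of the roots of the fixed-point quadratic shows that one fixed point of $p$ (and of $pg_s$) diverges linearly in $n$ while the other stays bounded, so by choosing the sign of $n$ appropriately, and if needed swapping $p$ with a conjugate, we achieve the desired positioning $a<s<b$.

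Given such $p,a,b$, I would define
\[
h_s(x)=\begin{cases}
x & \text{if } x\le a,\\
p(x) & \text{if } a\le x\le s,\\
(pg_s)(x) & \text{if } s\le x\le b,\\
x & \text{if } x\ge b.
\end{cases}
\]
Continuity at $a,s,b$ follows from $p(a)=a$, $g_s(s)=s$, and $pg_s(b)=b$; each piece is an element of $\Pl$; the three break points lie in $P_\Z$; and $h_s$ has identity germ at both $\pm\infty$, so $h_s\in G$. The slope change at $s$ is $p^{-1}\cdot pg_s=g_s$, while the slope changes at $a$ and $b$, although nontrivial in $St_a$ and $St_b$, occur at points outside $Gs$ and therefore do not contribute to $C_{h_s}$. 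Hence $C_{h_s}=\delta_s$. The main obstacle is the existence of $p$ in the previous paragraph: one must simultaneously dodge a sparse set of bad traces for two linear functions of $n$ while controlling the positions of the fixed points, an elementary but explicit Pell-type verification.
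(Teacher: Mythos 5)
Your construction is genuinely different from the paper's: you sandwich $s$ between two nontrivial pieces $p$ and $pg_s$ (with identity on both tails), whereas the paper uses identity on $(-\infty,s]$, then $g_s$ on $[s,\tilde s]$, then an auxiliary hyperbolic $j_s$ up to a point $t$, then identity. Your arithmetic input (vary $p=p_0\alpha_n$; traces are linear in $n$; Pell solutions form a sparse set to be dodged) is arguably cleaner than the paper's $p$-adic divisibility argument for $\tilde s\notin Gs$. You would, however, still need to pin down the positioning carefully: to avoid a pole of $p$ in $[a,s]$ (resp.\ of $pg_s$ in $[s,b]$) you in fact need $s$ to lie strictly between the two fixed points of $p$ (resp.\ of $pg_s$), with $a$ the lower fixed point of $p$ and $b$ the upper fixed point of $pg_s$; this is achievable by the asymptotic analysis you sketch, but it is not stated and is not automatic from merely ``$a<s<b$''. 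Also, the slope of $\mathrm{tr}(p_0\alpha_n g_s)$ in $n$ is $a_0c_g+c_0d_g$, not $c_g$ alone, so nonvanishing needs an additional choice of $p_0$, not just $c_g\neq0$.

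The more serious issue is that your $h_s$ does not fix $s$: $h_s(s)=p(s)$, and you have deliberately chosen $p$ so that the square class of $\mathrm{tr}(p)^2-4$ differs from that of $k_0$, which exactly forces $s$ not to be a fixed point of $p$, so $p(s)\neq s$. The paper's proof opens with ``We will construct $h_s$ such that $h_s(s)=s$'', and the left germ is the identity precisely to guarantee this. Although the statement of the lemma does not mention $h_s(s)=s$, that property is what makes Lemma~\ref{nostable} work: its one-line proof evaluates $C=C_{h_s}+S^{h_s}C$ at $s$ to obtain $C(s)=1+C(h_s(s))$, which is a contradiction only when $h_s(s)=s$. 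With $h_s(s)\neq s$ one can exhibit a configuration $C$ satisfying the equation (e.g.\ constant along the forward $h_s$-orbit of $s$ and shifted by $1$ along the backward orbit), so Lemma~\ref{nostable} would fail for your $h_s$, and the non-triviality arguments in Lemma~\ref{base} and Corollary~\ref{finfirstthomp} would break down. To repair this you would have to rearrange your construction so that the two $\Pl$-pieces adjacent to $s$ both fix $s$ (as the paper does, by taking the left piece to be the identity), which effectively forces you back toward the paper's layout rather than the symmetric one you propose.
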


\begin{proof}
Fix $s\in P_\Z$ and let $k=k_s$ be the unique square-free integer such that $s\in\Q(\sqrt{k})$. We will construct $h_s$ such that $h_s(s)=s$. Note that in that case we have $C_{h_s^{-1}}=-\delta_s$. This implies that if we construct an element $\tilde{h}_s$ that verifies $\tilde{h}_s(s-0)^{-1}\tilde{h}_s(s+0)=g_s^{\pm1}$ and all other slope changes are outside $Gs$, choosing $h_s=\tilde{h}_s^{\pm1}$ gives the result. In other words, we can replace $g_s$ with $g_s^{-1}$. Seen as a function on $\R$, $g_s$ is defined in all points but $-\frac{d}{c}$. It is then continuous in an interval around $s$. Moreover, if the interval is small enough, $s$ is the only fixed point in it. Therefore for some $\varepsilon$, either $g_s(x)>x$ for every $x\in(s,s+\varepsilon)$, or $g_s(x)<x$ in that interval. As we have the right to replace it with its inverse, without loss of generality we assume that $g_s$ is greater than the identity in a right neighbourhood of of $s$.

Write $s=r+r'\sqrt{k}$ with $r,r'\in\Q$. Then the other fixed point of $g_s$ is its conjugate $\bar{s}=r-r'\sqrt{k}$. Remark that it is impossible for $-\frac{d}{c}$ to be between $s$ and $s'$ as the function $g_s$ is increasing where it is continuous and has the same limits at $+\infty$ and $-\infty$ (see Figure~\ref{trivplot}). If $r'<0$, $g_s$ is greater than the identity in $(s,\bar{s})$ as it is continuous there. In that case, it is smaller than the identity to the left of the fixed points, but as it is increasing and has a finite limit at $-\infty$, this implies (see Figure~\ref{trivplot}) that $-\frac{d}{c}<s$. Similarly, if $s>\bar{s}$, $g_s$ is increasing and greater than the identity to the right of $s$, but has a finite limit at $+\infty$, so $-\frac{d}{c}>s$.

\begin{figure}
	\centering
	\begin{minipage}{8cm}\centering\caption{Graphs of $g_s$ and the identity}\label{trivplot}
		\begin{tikzpicture}
		\begin{axis}[xmin=-4,xmax=4,ymin=-4,ymax=4,axis lines = middle, legend pos = south west,xtick={-10},ytick={17}]
		\addplot[domain=-3.8:3.8,color=black]{x};
		\addlegendentry{$Id$}
		\addplot[color=blue,samples=100,domain=-4:-2,restrict y to domain=-4:4,dashed,thick]{(2*x+3)/(x+2)};
		\addlegendentry{$g_s$}
		\addplot[color=red,samples=100,domain=-4:2,restrict y to domain=-4:4]{(2*x-3)/(2-x)};
		\addplot[color=red,samples=100,domain=2:4,restrict y to domain=-4:4]{(2*x-3)/(2-x)};
		\addlegendentry{$g_s^{-1}$}
		\addplot[color=blue,samples=100,domain=-2:4,restrict y to domain=-4:4,dashed,thick]{(2*x+3)/(x+2)};
		\node[label={-30:{$s$}},circle,fill,inner sep=1pt] at (axis cs:-1.732,-1.732) {};
		\end{axis}
		\end{tikzpicture}
	\end{minipage}
	\begin{minipage}{8cm}\centering\caption{Graphs of $g_s$ and $j_s$}\label{plot}
		\begin{tikzpicture}
		\begin{axis}[xmin=-4,xmax=4,ymin=-4,ymax=4,axis lines = middle, legend pos = south west,xtick={-10},ytick={17}]
		\addplot[domain=-3.8:3.8,color=black]{x};
		\addlegendentry{$Id$}
		\addplot[color=blue,samples=100,domain=-4:-2,restrict y to domain=-4:4,dashed,thick]{(2*x+3)/(x+2)};
		\addlegendentry{$g_s$}
		\addplot[color=red,samples=100,domain=-4:0,restrict y to domain=-4:4]{(2*x-3)/(2-x)};
		\addlegendentry{$g_s^{-1}$}	
		\addplot[samples=100,domain=0:4,restrict y to domain=-4:4,densely dotted,thick]{(4*x-1)/x};
		\addlegendentry{$j_s$}
		\addplot[color=blue,samples=100,domain=-2:4,restrict y to domain=-4:4,dashed,thick]{(2*x+3)/(x+2)};
		\addplot[color=red,samples=150,domain=0:2,restrict y to domain=-4:4]{(2*x-3)/(2-x)};
		\addplot[color=red,samples=100,domain=2:4,restrict y to domain=-4:4]{(2*x-3)/(2-x)};
		\addplot[samples=100,domain=-4:0,restrict y to domain=-4:4,densely dotted,thick]{(4*x-1)/x};
		\node[label={-1:{$\bar{t}$}},circle,fill,inner sep=1pt] at (axis cs:0.268,0.268) {};
		\node[label={110:{$\tilde{s}$}},circle,fill,inner sep=1pt] at (axis cs:0.414,1.5858) {};
		\end{axis}
		\end{tikzpicture}	
	\end{minipage}
\end{figure}

We will find a hyperbolic element $j_s$ verifying: the larger fixed point $t$ of $j_s$ is not in $Gs$ and $t>-\frac{d}{c}$, while the smaller fixed point $\bar{t}$ is between $s$ and $\bar{s}$, and $j_s$ is greater than the identity between $\bar{t}$ and $t$. If $r'<0$ consider the interval $(\bar{t},\bar{s})$. At its infimum, $j_s$ has a fixed point while $g_s$ is greater than the identity, and at its supremum the inverse is true. By the mean values theorem, there exists $\tilde{s}$ in that interval such that $j_s(\tilde{s})=g_s(\tilde{s})$ (see Figure~\ref{plot}). If $r'>0$, consider the interval $(s,-\frac{d}{c})$. At its infimum, $g_s$ is fixed and therefore smaller than $j_s$, and at its supremum $g_s$ diverges towards $+\infty$ while $j_s$ has a finite limit. Again by the mean values theorem, there exists $\tilde{s}$ in that interval where $g_s$ and $j_s$ agree. As $-\frac{d}{c}<t$ by hypothesis, in both cases we have $s<\tilde{s}<t$. We then define

\begin{equation*}h_s(x)=\begin{cases}
x & x\leq s \\
g_s(x) & s\leq x\leq\tilde{s}\\
j_s(x) & \tilde{s}\leq x\leq t \\
x & t\leq x \\
\end{cases}
\end{equation*}

Thus it would suffice to prove that we can construct $j_s$ that verifies those properties and such that $\tilde{s}\notin Gs$. Note that $\tilde{s}$ is a fixed point of $g_s^{-1}j_s$, so to prove that it is not in $Gs$ it will suffice to study the trace of the latter. Remark that in this definition $h_s$ is strictly greater than the identity in an open interval, and equal to it outside (this is with the assumption on $g_s$, in the general case $h_s$ has its support in an open interval, and is either strictly greater then the identity on the whole interval, or strictly smaller).

Write $r=\frac{p}{q}$. By Bezout's identity, there are integers $\tilde{m}$ and $\tilde{n}$ such that $q\tilde{n}-p\tilde{m}=1$. Then the matrix $i=\begin{pmatrix}\tilde{n}&p\\\tilde{m}&q\end{pmatrix}\in \Pl$ verifies $i.0=\frac{p}{q}$. Taking $\tilde{j}_s=i^{-1}j_si$ it suffices to find $\tilde{j}_s$ with fixed points outside $Gs$, the smaller one being close enough to $0$, and the greater one large enough. Remark that the only information we have on $g_s$ is its trace, so this does not complicate the computations for $\tilde{s}$.

We will define $\tilde{j}_s$ in the form $\begin{pmatrix}x'+ma'&n^2l_sa'-m^2a'\\a'&x'-ma'\end{pmatrix}$ where $x'^2-n^2a'^2l_s=1$. Its fixed points are $m\pm n\sqrt{l_s}$. By choosing $m$ arbitrarily large, the second condition will be satisfied. Note $ig_s^{-1}i^{-1}=\begin{pmatrix}\tilde{a}&\tilde{b}\\\tilde{c}&\tilde{d}\end{pmatrix}$ and $tr(g_s)^2-4=o^2k$. Calculating the trace of $g_s^{-1}j_s$ we get $tr(g_s)x'+a'\tilde{b}+mz_1+nz_2$ with $z_1,z_2\in\Z$. Then, admitting that $n$ divides $x'-1$ (which will be seen in the construction of $x'$) we obtain for some $z_i\in Z$, $i\in\N$:

\begin{equation}\label{moche}
\begin{split}
tr(g_s^{-1}j_s)^2-4&=mz_3+nz_4+a'^2\tilde{b}^2+2a'\tilde{b}x'tr(g_s)+x'^2tr(g_s)^2-tr(g_s)^2+tr(g_s)^2-4\\
&=mz_3+nz_5+a'^2\tilde{b}^2+2a'\tilde{b}tr(g_s)+n^2a'^2l_str(g_s)^2+o^2k\\
&=mz_3+nz_6+a'^2\tilde{b}^2+2a'\tilde{b}tr(g_s)+o^2k.
\end{split}
\end{equation}

Take a prime $p_s$ that is larger than $k$ and $b(tr(g_s)+2)$. There is an integer $a''<p_s$ such that $b(tr(g_s)+2)a''\equiv-1\mod{p_s}$. Take $a=o^2ka''$. Then 

$$a'^2\tilde{b}^2+2a'\tilde{b}tr(g_s)+o^2k=o^2k(b(tr(g_s)+2)a''+1)(b(tr(g_s)-1)).$$

As $\Z[p_s]$ is a field, clearly $b(tr(g_s)-2)a''\not\equiv-1\mod{p_s}$. As $b(tr(g_s)+2)a''<p_s^2$, the product is divisible by $p_s$ but not $p_s^2$. We will choose $m$ and $n$ divisible by $p_s^2$, which will then ensure that the value in (\ref{moche}) is divisible by $p_s$ but not $p_s^2$, proving that $\tilde{s}\notin Gs$.

All that is left is choosing $n$ and $m$. As we just noted, we need them to be multiples of $p_s^2$. Aside from that $n$ needs to satisfy $x'^2-n^2a'^2l_s=1$, $l_s$ must not be a square times $k$ and we need to be able to make $m-n\sqrt{l_s}$ arbitrarily small. Write $m=p_s^2m'$ and $n=p_s^2n'$. Then $m'$ can be anything so long as $m-n\sqrt{l_s}$ becomes arbitrarily small. In other words, we are only interested in the fractional part of $n'\sqrt{l_s}$. We choose $x'=n'^2a'^2p_s^5-1$ and will prove that the conditions are satisfied for $n'$ large enough. Then $x'^2-n^2a'^2l_s=1$ is satisfied for $l_s=p_s(n'^2a'^2p_s^5-2)$. In particular, $p_s$ divides $l_s$ but its square does not, so $l_s$ is not equal to a square times $k$. Moreover, $\sqrt{l_s}=\sqrt{(n'a'p_s^3)^2-2p_s}$ and as the derivative of the square root is strictly decreasing, $\sqrt{(n'a'p_s^3)^2-2p_s}-n'a'p_s^3\rightarrow0$ for $n'\rightarrow\infty$. Its factorial part then clearly converges towards $1$, which concludes the proof.\end{proof}

For a product inside the group $\G$, by the chain rule we have

$$(g_2g_1)'(\gamma)=g_2'(g_1(\gamma))g_1'(\gamma)$$
and thus

\begin{equation}\label{der}C_{g_2g_1}(\gamma)=C_{g_1}(\gamma)+C_{g_2}(g_1(\gamma))\end{equation}

That gives us a natural action of $\G$ on $\Z^{Gs}$ by the formula $(g,C)\rightarrow C_g+S^gC$ where $S^gC(\gamma)=C(g(\gamma))$. It is easy to check that it also remains true for $s\in\Q$.

\begin{lemma}\label{nostable}
	There is no configuration $C:Gs\rightarrow\Z$ such that $C=C_{h_s}+S^{h_s}C$.
\end{lemma}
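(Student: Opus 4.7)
The plan is to derive a contradiction from the hypothetical identity $C = C_{h_s} + S^{h_s}C$ by evaluating both sides at a single well-chosen point of the orbit $Gs$. The natural choice is the point $s$ itself. Two facts about $h_s$ make this evaluation immediate.

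First, $h_s$ fixes $s$: by the construction in the proof of Lemma~\ref{hs}, the element $h_s$ equals the identity on $(-\infty, s]$ and coincides with $g_s$ on $[s,\tilde{s}]$, and $g_s$ has $s$ as a fixed point. Consequently $(S^{h_s}C)(s) = C(h_s(s)) = C(s)$. Second, Lemma~\ref{hs} asserts precisely that $C_{h_s} = \delta_s$, so $C_{h_s}(s) = 1$.

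Substituting $\gamma = s$ into the supposed equality gives $C(s) = 1 + C(s)$, which has no solution in $\Z$. This is the entirety of the argument; the conceptual content sits inside Lemma~\ref{hs}, which was arranged precisely so that the configuration of $h_s$ is concentrated at a fixed point of $h_s$, and the present lemma is the short observation that the cocycle $C_{h_s}$ is then not a coboundary for the $h_s$-action on $\Z^{Gs}$. I do not expect any obstacles beyond confirming the two facts above, both of which are already built into the statement and proof of Lemma~\ref{hs}.
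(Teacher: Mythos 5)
Your argument is correct and is essentially the paper's own proof: the paper likewise evaluates the putative identity at $\gamma=s$, using $h_s(s)=s$ and $C_{h_s}(s)=1$ to obtain $C(s)=1+C(s)$, a contradiction. You have simply spelled out the two facts the paper leaves implicit.
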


Indeed, applying (\ref{der}) and taking the value at $s$ we get a contradiction. 

Consider $g$ and $h$ such $C_g=C_h$. We have $C_{Id}=C_{g^{-1}}+S^{g^{-1}}C_g$ and thus $C_{hg^{-1}}=C_{g^{-1}}+S^{g^{-1}}C_h=C_{Id}=0$. We denote

$$H_s=\{g\in G:C_g=0\}.$$

Then:

\begin{lemma}\label{generate}
	The element $h_s$ and the subgroup $H_s$ generate $G$ for every $s\in P_\Z$.	
\end{lemma}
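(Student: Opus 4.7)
Let $K:=\langle H_s,h_s\rangle$. The plan is to show $K=G$ by induction on $n=|\supp(C_g)|$, which is finite for every $g\in G$. The base case $n=0$ is immediate, since then $g\in H_s\subset K$. For the inductive step, I fix $\gamma_0\in\supp(C_g)$ with value $k_0=C_g(\gamma_0)\neq 0$. The cocycle identity (\ref{der}) gives, for any $e\in G$,
\[C_{eg}(\gamma')=C_g(\gamma')+C_e(g(\gamma')).\]
If I can exhibit $e\in K$ with $C_e=-k_0\,\delta_{g(\gamma_0)}$, then the injectivity of $g$ yields $C_{eg}=C_g-k_0\,\delta_{\gamma_0}$, whose support is $\supp(C_g)\setminus\{\gamma_0\}$ of size $n-1$; by the induction hypothesis $eg\in K$, so $g=e^{-1}(eg)\in K$.

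The whole argument therefore reduces to the following key subclaim: for each $\gamma\in Gs$ and each $k\in\Z$, there exists an element of $K$ with configuration $k\delta_\gamma$. I plan to derive this from the transitivity statement: \emph{for every $\gamma\in Gs$ there is an $f\in H_s$ with $f(s)=\gamma$}. Given such an $f$, the conjugate $fh_s^kf^{-1}$ lies in $K$, and two applications of (\ref{der}) combined with $C_f=C_{f^{-1}}=0$ and $C_{h_s^k}=k\delta_s$ (the latter because $h_s$ fixes $s$) yield $C_{fh_s^kf^{-1}}=k\delta_\gamma$, completing the subclaim.

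The main obstacle is thus the transitivity of $H_s$ on $Gs$. I would produce the required $f$ as follows: starting from any $\tilde f\in\Pl$ with $\tilde f(s)=\gamma$, which exists because $Gs$ coincides with the $\Pl$-orbit of $s$, construct $f\in G$ as a piecewise projective homeomorphism of $\R$ that agrees with $\tilde f$ on some neighborhood of $s$, equals translations $\alpha_n\in\A\subset H_s$ near both $\pm\infty$, and is stitched together in between by auxiliary projective pieces whose break points lie in the dense set $\R\setminus Gs$. Since no break point of $f$ falls in $Gs$, one has $C_f=0$ and hence $f\in H_s$. Ensuring simultaneously that the stitching yields an orientation-preserving homeomorphism of $\R$ and that all chosen break points lie in $\R\setminus Gs$ requires an arithmetic argument in the spirit of the Pell-type construction used in the proof of Lemma~\ref{hs}, since the candidate fixed points of the interpolating hyperbolic elements must be arranged to avoid the orbit $Gs$.
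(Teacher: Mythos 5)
Your proof is correct and follows essentially the same strategy as the paper's: an induction that strips away the support of $C_g$ one point at a time, using the transitivity of $H_s$ on the orbit $Gs$ to conjugate $h_s$ to clear the configuration at any desired point. The paper inducts on $\|C_g\|_1$ and uses $h h_s h^{-1}$ to reduce a value by $1$, whereas you induct on $|\supp(C_g)|$ and use $f h_s^{k} f^{-1}$ to zero a value in one step; these are interchangeable. The transitivity step you sketch is exactly the paper's Lemma~\ref{trace}, which is proved there via Monod's construction rather than a fresh Pell-type argument, but either route closes that gap.
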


\begin{proof}We show for $g\in G$ by induction on $\|C_g\|_1=\sum_{x\in Gs}|C_g(x)|$ that it is in the group generated by $\{h_s\}\cup H_s$ . The base is for $\|C_g\|_1=0$, in which case we have $C_g\equiv 0$ and the result is part of the statement hypothesis. We take $g\in G$ and assume that every element with smaller $l^1$ measure of its configuration is in the group generated by $\{h_s\}\cup H_s$. We take any $\alpha\in\supp(C_g)$. Without loss of generality, we can assume that $C_g(\alpha)>0$. As $g(\alpha)\in Gs$, by Lemma~\ref{trace} there exists $h\in H_s$ such that $h(s)=g(\alpha)$ and $C_h=0$. Let $\tilde{g}=hh_sh^{-1}$. As $h_s\in\{h_s\}\cup H_s$, we have $\tilde{g}\in\langle \{h_s\}\cup H_s\rangle$. Applying the composition formula~(\ref{der}) we obtain $C_{\tilde{g}}(x)=0$ for $x\neq g(\alpha)$ and $C_{\tilde{g}}(g(\alpha))=1$. We consider $\bar{g}=\tilde{g}^{-1}g$. If $x\neq g(\alpha)$, by the composition formula (\ref{der}) we get $C_{\bar{g}}(x)=C_g(x)$, and at $\alpha$ we have $C_{\bar{g}}(\alpha)=C_g(\alpha)-1$. By hypothesis we then have $\bar{g}\in\langle \{h_s\}\cup H_s\rangle$, and as $\tilde{g}$ is also included in this set, so is $g$.\end{proof}

\begin{lemma}\label{trace}
	For any $g\in \Pl$ and $\gamma\in\R$ there exists $h\in H_s$ such that $g(\gamma)=h(\gamma)$.
\end{lemma}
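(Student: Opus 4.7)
My plan is to construct $h$ directly for any $g\in\Pl$ and $\gamma\in\R$ with $g(\gamma)\in\R$ (the only relevant case, since every $h\in H_s\subseteq G$ fixes infinity and so $g(\gamma)=\infty$ would make the conclusion impossible). If $g\in\A$, then $g\in G$ already has no break points and $C_g\equiv0$, so $h=g$ works. Assume from now on that $g\notin\A$, so that $g$ has a pole $p=-d/c\in\Q$. The idea is to define $h$ to equal $g$ on a compact interval $[a,b]$ containing $\gamma$ and avoiding $p$, and to extend $h$ outside $[a,b]$ to a piecewise $\Pl$ homeomorphism of $\R$ fixing infinity. Then $h(\gamma)=g(\gamma)$ automatically.

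The endpoints $a,b$, together with every auxiliary break point used in the extension, will be chosen in $P_\Z\setminus Gs$. By Lemma~\ref{all}, for any square-free positive integer $k'$ not equivalent to $k_s$ modulo rational squares, one has $\Q(\sqrt{k'})\setminus\Q\subseteq P_\Z\setminus Gs$, and this set is dense in $\R$, so such choices of $a$, $b$, and auxiliary break points are always available. Because no break point of $h$ lies in $Gs$, the associated configuration $C_h$ vanishes identically and $h\in H_s$ as soon as the extension is an honest element of $G$.

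The substantive step, and the main obstacle, is the construction of the extension itself. On each outer half-line $(-\infty,a]$ and $[b,\infty)$ one needs an increasing piecewise $\Pl$ homeomorphism onto the appropriate image which takes the prescribed boundary value $g(a)$ or $g(b)$ at the finite endpoint and has a common translation germ $\alpha_n\in\A$ at infinity (the two outer germs being forced equal by $h\in G$). Matching these boundary values to the germ $\alpha_n$ with integer-entry transition matrices is a Diophantine problem in $\Pl$. I would solve it in the spirit of the proof of Lemma~\ref{hs}: insert one or two additional break points in $\Q(\sqrt{k'})\setminus\Q$ and invoke Pell-type equations to build hyperbolic elements of $\Pl$ with fixed points in $\Q(\sqrt{k'})$ and slopes satisfying the required matching conditions. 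The freedom in varying $k'$, the Pell parameters and the positions of the auxiliary break points should be enough to meet all constraints simultaneously, exactly as the prime $p_s$ and the parameters $x'$, $a'$, $n'$, $m'$ were used in Lemma~\ref{hs} to arrange both the trace and the avoidance of $Gs$. Once the extension is in place, $h$ is in $G$, all of its break points lie in $P_\Z\setminus Gs$, hence $C_h=0$ and $h\in H_s$, while $h(\gamma)=g(\gamma)$ by construction on $[a,b]$.
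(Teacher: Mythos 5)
There is a genuine gap here, and it sits precisely where you flag it yourself: "the substantive step, and the main obstacle, is the construction of the extension itself," which you then leave as a sketch ("should be enough to meet all constraints simultaneously"). The extension problem you pose is genuinely a Diophantine matching problem — the two half-line pieces must take the prescribed values $g(a)$ and $g(b)$ at the finite endpoints, have a common translation germ $\alpha_n$ at $\pm\infty$, use only integer-entry transitions, and in addition you need the break point $a$ (resp.\ $b$) to be a fixed point of a hyperbolic element of $\Pl$ so that the left and right germs at $a$ differ by an element of the cyclic stabilizer $St_a$. That is not a small list of simultaneous constraints, and optimism that "the freedom in varying $k'$, the Pell parameters and the positions of the auxiliary break points should be enough" is not a proof. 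In effect you are proposing to reprove a nontrivial construction from scratch.

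The paper avoids this entirely by invoking Monod's Proposition~9, which already produces an $h\in G$ agreeing with $g$ at $\gamma$: it is $q^{-1}g$ on the component between the fixed points of $q=\begin{pmatrix}a&b+ra\\c&d+rc\end{pmatrix}$ containing infinity and the identity otherwise, for $|r|$ large. The only thing that remains for Lemma~\ref{trace} is then a short arithmetic argument: choose a large prime $p$ coprime to $2$, $c$, and $k$, and pick $r$ in an arithmetic progression modulo $p^2$ so that $tr(q)^2-4\equiv 4p\pmod{p^2}$; since $p\nmid k$, the fixed points of $q$ are then not in $\Q(\sqrt{k})\supseteq Gs$, giving $C_h\equiv 0$. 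Your overall framework (make $h$ agree with $g$ near $\gamma$, put all break points in $P_\Z\setminus Gs$) is correct, and your modular-avoidance idea is of the right flavor, but the heart of the proof — producing the piecewise-$\Pl$ extension at all — is missing; you should cite Monod's Proposition~9 rather than try to rebuild it.
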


\begin{proof}By Monod's construction in~\cite[Proposition~9]{h-main}, we know that we can find $h\in G$ that agrees with $g$ on $\gamma$ of the form $q^{-1}g$ where $q=\begin{pmatrix}a&b+ra\\c&d+rc\end{pmatrix}$ in the interval between its fixed points that contains infinity and the identity otherwise. To have this result, what is required is that either $r$ or $-r$ (depending on the situation) be large enough. Clearly, $C_h\equiv0$ would follow from slope change points of $q$ being outside $Gs$ (as neither of them is infinity). In particular, it is enough to prove that for some infinitely large $r$, the fixed points of $\begin{pmatrix}a&b+ra\\c&d+rc\end{pmatrix}$ are outside $\Q(\sqrt{k})$. The trace of that matrix is $(a+d)+rc$. Let $p$ be a large prime number that does not divide $2$, $k$ or $c$. As $c$ and $p$ are co-prime, there exists $r_0$ such that $a+d+r_0c=p+2\pmod{p}$. Then for every $i\in\Z$, we have $(a+d+(r_0+p^2i)c)^2-4=4p(modp^2)$. As $p$ and $4$ are co-prime, this implies that for each $r=r_0+p^2i$ the fixed points of that matrix are not in $\Q(\sqrt{k})$ as $p$ does not divide $k$.\end{proof}

\section{Convergence condition}\label{sectfour}

Fix $s\in P_\Z$ and let us use the notations from Subsection~\ref{confsect}. For a measure $\mu$ on $\G$ we denote $C_\mu=\bigcup_{g\in\supp(\mu)}\supp(C_g)$ its "support" on $Gs$. That is, $C_\mu\subset Gs$ is the set of points in which at least one element that is inside the support of $\mu$ in the classical sense changes slope. We thus obtain the first result

\begin{lemma}\label{base}
	Consider the piecewise $\Pl$ group $\G$ (see Definition~\ref{tildeg}). Let $\mu$ be a measure on a subgroup of $\G$ such that $C_\mu$ is transient with respect to $\mu$ for the natural action of $\G$ on $\R$ and $h_s$ is in the semigroup generated by $\supp(\mu)$. Then the Poisson boundary of $\mu$ on the subgroup is not trivial.
\end{lemma}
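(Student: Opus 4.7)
The plan is to construct, for each trajectory $(x_n)$ of the random walk generated by $\mu$, a boundary observable $\Phi^{(s)}((x_n)) = \lim_{n \to \infty} C_{x_n}(s) \in \Z$ and to deduce non-triviality of the Poisson boundary from the fact that this observable is not almost surely constant.

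First I would establish convergence. Using the cocycle formula (\ref{der}) applied to $x_n = y_n\cdots y_1$, one has $C_{x_n}(s) - C_{x_{n-1}}(s) = C_{y_n}(x_{n-1}(s))$. Because $\supp(C_{y_n})\subset C_\mu$ whenever $y_n\in\supp(\mu)$, this increment is non-zero only when the orbit point $x_{n-1}(s)$ lies in $C_\mu$. The transience hypothesis applied to this orbital Markov chain on $\R$ then forces $x_{n-1}(s)\in C_\mu$ to occur for only finitely many $n$ almost surely, so $C_{x_n}(s)$ stabilises and $\Phi^{(s)}$ is well defined almost surely. Because its value equals $C_{x_N}(s)$ for every large enough $N$, $\Phi^{(s)}$ depends only on the tail of the trajectory and is invariant under the shift equivalence of Definition~\ref{poisson}; hence it descends to a measurable function on the Poisson boundary.

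Next I would prove that this function is not almost surely constant. The hypothesis provides $k\geq 1$ and $g_1,\dots,g_k\in\supp(\mu)$ with $g_k\cdots g_1=h_s$, so the event $E=\{y_i=g_i,\ 1\leq i\leq k\}$ has probability $\prod_{i=1}^{k}\mu(g_i)>0$. On $E$ one has $x_k=h_s$, which fixes $s$ by Lemma~\ref{hs} and satisfies $C_{x_k}(s)=1$. Setting $z_m=y_{k+m}\cdots y_{k+1}$, which is a fresh independent copy of the walk, the cocycle identity applied to $x_{k+m}=z_m x_k$ together with $x_k(s)=s$ gives $C_{x_{k+m}}(s)=1+C_{z_m}(s)$, and in the limit $\Phi^{(s)}((x_n))=1+\Phi^{(s)}((z_m))$ on $E$. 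If $\Phi^{(s)}$ were almost surely equal to a constant $c$, both sides would equal $c$, forcing $c=1+c$, a contradiction. In effect this exploits the cohomological obstruction of Lemma~\ref{nostable} evaluated at the single point $s$.

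The subtle step I anticipate is the first one: to make transience bite, one must read the hypothesis as a statement about the orbital chain $(x_n(s))_n$ on $\R$ induced by left-multiplication in $\widetilde{G}$, and verify that transience of $C_\mu$ for this chain implies finitely many visits from $s$ to $C_\mu$ almost surely. Once that is settled, the descent to the Poisson boundary and the $c=1+c$ contradiction are short formal consequences of the cocycle identity and of the positive probability of $E$.
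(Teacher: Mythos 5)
Your proof is correct and takes essentially the same approach as the paper: both establish stabilisation of the configuration value via the transience hypothesis and the cocycle identity~(\ref{der}), and then exploit the positive probability of a prefix multiplying to $h_s$ to derive the cohomological contradiction $c=1+c$. The only difference is presentational --- you track the single observable $\lim_n C_{x_n}(s)$ rather than the whole limiting configuration as a hitting measure on $\Z^{Gs}$, but the paper's contradiction via Lemma~\ref{nostable} also reduces to evaluating at the point $s$.
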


\begin{proof}Consider a random walk $g_n$ with $g_{n+1}=h_ng_n$. For a fixed $\gamma\in Gs$ we have

$$C_{g_{n+1}}(\gamma)=C_{g_n}(\gamma)+C_{h_n}(g_n(\gamma))$$

By the hypothesis of transiency this implies that $C_{g_n}(\gamma)$ stabilises. In other words, $C_{g_n}$ converges pointwise towards a limit $C_\infty$. This defines a hitting measure on $\Z^{Gs}$ that is a quotient of $\mu$'s Poisson boundary. Moreover, it is $\mu$-invariant by the natural action on $\Z^{Gs}$. It remains to see that it is not trivial. Assume the opposite, which is that there exists a configuration $C$ such that for almost all walks, the associated configuration $C_{g_n}$ converges pointwise to $C$. By hypothesis there are elements $h_1,\dots,h_m$ with positive probability such that $h_mh_{m-1}\dots h_1=h_s$. There is a strictly positive probability for a random walk to start with $h_mh_{m-1}\dots h_1$. Applying~(\ref{der}) we get $C=C_{h_s}+S^{h_s}C$, which is contradictory to Lemma~\ref{nostable}.\end{proof}

This lemma, along with Lemma~\ref{generate} implies:

\begin{lemma}\label{constr}
Fix $s\in P_\Z$. Let $\mu$ be a measure on $G=H(\Z)$ that satisfies the following conditions:
		
	(i) The element $h_s$ belongs to the support of $\mu$,
	
	(ii) The intersection of the support of $\mu$ with the complement of $H_s$ is finite,
	
	(iii) The action of $\mu$ on the orbit of $s$ is transient.
		
Then the Poisson boundary of $\mu$ is non-trivial.
\end{lemma}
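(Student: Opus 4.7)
The plan is to reduce this lemma directly to Lemma~\ref{base}, which requires two hypotheses: that $h_s$ belongs to the semigroup generated by $\supp(\mu)$, and that the set $C_\mu = \bigcup_{g\in\supp(\mu)}\supp(C_g) \subset Gs$ is transient for the random walk driven by $\mu$ on the orbit. The first of these is immediate from condition (i), since $h_s\in\supp(\mu)$ itself. So the whole proof boils down to verifying transience of $C_\mu$, and the main (minor) point is to show that $C_\mu$ is actually a \emph{finite} subset of $Gs$.

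First I would observe that the definition of $H_s$ as the kernel of $g\mapsto C_g$ means exactly that $\supp(C_g)=\emptyset$ for every $g\in H_s$. Therefore these elements contribute nothing to the union defining $C_\mu$, and we may rewrite
\[
C_\mu \;=\; \bigcup_{g\,\in\,\supp(\mu)\setminus H_s} \supp(C_g).
\]
By condition (ii) the index set of this union is finite. On the other hand, every $g\in G=H(\Z)$ has by definition only finitely many break points, so $C_g$ has finite support. Thus $C_\mu$ is a finite union of finite sets, hence a finite subset of $Gs$.

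Now I would invoke condition (iii): the random walk induced by $\mu$ on $Gs$ is transient, which by the standard definition of transience means that every point of $Gs$ is visited only finitely many times almost surely. A finite union of points is then also visited only finitely often almost surely, so the finite set $C_\mu$ is transient with respect to $\mu$. Both hypotheses of Lemma~\ref{base} are satisfied, and applying it yields the non-triviality of the Poisson boundary of $(G,\mu)$. The argument is entirely routine once one notices that $H_s$-elements are invisible in $C_\mu$; the only place where a genuine property of $H(\Z)$ intervenes is the finiteness of the break-point set of an individual element, which is built into the definition of $G$.
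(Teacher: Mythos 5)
Your proof is correct and follows essentially the same route the paper intends, namely a direct reduction to Lemma~\ref{base}: condition (i) puts $h_s$ in the semigroup, while (ii) and (iii) give that $C_\mu$ is a finite subset of $Gs$ (since $H_s$ is exactly the kernel of $g\mapsto C_g$) and therefore transient. The paper also cites Lemma~\ref{generate} at this point, but as your argument shows it is not needed to deduce the statement itself; it serves rather to indicate that measures whose support generates all of $G$ and which satisfy (i)--(iii) actually exist.
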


We will now show how measures satisfying whose assumptions can be constructed. Remark that the question of existence of a measure with non-trivial boundary has already been solved by Frisch-Hartman-Tamuz-Vahidi-Ferdowski~\cite{choquet-deny}. In our case, notice that $\A\subset H_s$ (see (\ref{agrp})), and it is isomorphic to $\Z$. We can then use a measure on $\A$ to ensure transience of the induced walk on the orbit. To prove that, we use a lemma from Baldi-Lohoué-Peyrière~\cite{var} (see also Woess~\cite[Section~2.C,3.A]{woess2000random}). Here we formulate a stronger version of the lemma, as proven by Varopoulos~\cite{Varopoulis1983}:

\begin{lemma}[Comparison lemma]\label{var}
	Let $P_1(x,y)$ and $P_2(x,y)$ be doubly stochastic kernels on a countable set $X$ and assume that $P_2$ is symmetric. Assume that there exists $\varepsilon\geq 0$ such that
	
	$$P_1(x,y)\geq\varepsilon P_2(x,y)$$
	for any $x,y$. Then
	
	\begin{enumerate}
		\item For any $0\leq f\in l^2(X)$
		
		$$\sum_{n\in\N}\langle P_1^nf,f\rangle\leq \frac{1}{\varepsilon}\sum_{n\in\N}\langle P_2^nf,f\rangle.$$
		
		\item If $P_2$ is transient then so is $P_1$ (for any point $x\in X$, it follows from (1) applied to $f=\delta_x$).
	\end{enumerate}
\end{lemma}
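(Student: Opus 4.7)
The plan is to prove (1), from which (2) follows by applying the inequality to $f = \delta_x$. The strategy is to work with the regularized resolvents $(I - tP_i)^{-1}$ for $t \in (0,1)$ and pass to the limit $t \to 1^-$, sidestepping invertibility issues for $I - P_i$. The starting observation is that, assuming $\varepsilon \in (0,1)$ (the cases $\varepsilon = 0$ or $\varepsilon = 1$ being trivial, the latter by double stochasticity), the kernel $Q = (P_1 - \varepsilon P_2)/(1-\varepsilon)$ is itself doubly stochastic, so $P_1 = \varepsilon P_2 + (1-\varepsilon) Q$ and hence $I - tP_1 = \varepsilon(I - tP_2) + (1-\varepsilon)(I - tQ)$ as operators on $l^2(X)$.

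Fix $0 \leq f \in l^2(X)$ and $t \in (0,1)$, and set $g_t = (I - tP_1)^{-1} f = \sum_n t^n P_1^n f$, which converges since any doubly stochastic kernel is an $l^2$-contraction (a standard Cauchy-Schwarz on rows/columns). Then $\langle f, g_t\rangle = \langle (I - tP_1) g_t, g_t\rangle$. Since $Q$ is also an $l^2$-contraction, $\langle (I - tQ) g_t, g_t\rangle \geq (1-t)\|g_t\|^2 \geq 0$, so the operator identity above yields the lower bound
\begin{equation*}
    \langle f, g_t\rangle \geq \varepsilon \langle (I - tP_2) g_t, g_t\rangle.
\end{equation*}

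The crux is a Cauchy-Schwarz step. Since $P_2$ is self-adjoint with $\|P_2\| \leq 1$, for $t \in (0,1)$ the operator $I - tP_2$ is positive self-adjoint, and $B(u,v) = \langle (I - tP_2) u, v\rangle$ is therefore a positive semi-definite symmetric bilinear form. Applied with $u = g_t$ and $v = (I - tP_2)^{-1} f$, Cauchy-Schwarz gives $\langle g_t, f\rangle^2 = B(g_t, v)^2 \leq B(g_t, g_t) \cdot B(v, v) = \langle (I - tP_2) g_t, g_t\rangle \cdot \langle (I - tP_2)^{-1} f, f\rangle$. Combining with the lower bound above and dividing by $\langle g_t, f\rangle$ (arguing trivially if it vanishes) yields
\begin{equation*}
    \sum_n t^n \langle P_1^n f, f\rangle \leq \varepsilon^{-1} \sum_n t^n \langle P_2^n f, f\rangle.
\end{equation*}
Since $f \geq 0$, each $\langle P_i^n f, f\rangle$ is non-negative (the entries of $P_i^n$ are non-negative), so monotone convergence as $t \to 1^-$ gives the desired inequality.

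The main conceptual obstacle is the apparent reversal of direction: the entry-wise lower bound $P_1 \geq \varepsilon P_2$ must be turned into an upper bound on the Green function of $P_1$. The Cauchy-Schwarz step above is precisely what achieves this reversal, by converting a lower bound on $\langle (I - tP_2) g_t, g_t\rangle$ into an upper bound on $\langle g_t, f\rangle$ through the positive semi-definite form attached to the symmetric kernel $P_2$. Symmetry of $P_2$ is used essentially here, to ensure that $B$ is a legitimate symmetric form to which Cauchy-Schwarz applies.
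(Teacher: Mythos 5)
The paper does not prove this lemma; it cites it from Baldi--Lohou\'e--Peyri\`ere and, in the sharper form stated here, from Varopoulos, with a pointer to Woess's book. There is therefore no internal proof to compare against, so I will assess your argument on its own merits.

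Your proof is correct and complete. The decomposition $P_1=\varepsilon P_2+(1-\varepsilon)Q$ with $Q$ doubly stochastic, the resolvent regularisation $g_t=(I-tP_1)^{-1}f$, the operator identity $I-tP_1=\varepsilon(I-tP_2)+(1-\varepsilon)(I-tQ)$, and the Cauchy--Schwarz inequality for the positive self-adjoint form $B(u,v)=\langle(I-tP_2)u,v\rangle$ applied at $u=g_t$, $v=(I-tP_2)^{-1}f$ all check out: $B(g_t,v)=\langle g_t,f\rangle$, $B(v,v)=\langle(I-tP_2)^{-1}f,f\rangle$, and dividing by $\langle g_t,f\rangle$ (non-negative since $f\geq 0$ and $P_1$ has non-negative entries, and the case where it vanishes is trivial) yields the bound at each $t<1$, which passes to $t\to1^-$ by monotone convergence. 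The argument that $\langle(I-tQ)g_t,g_t\rangle\geq(1-t)\|g_t\|^2\geq 0$ from $\|Q\|\leq1$ is exactly what lets you discard the $Q$-term and isolate the $P_2$-form, and you correctly identify the Cauchy--Schwarz step as the mechanism that converts the entrywise lower bound $P_1\geq\varepsilon P_2$ into an upper bound on the Green function of $P_1$. You also correctly note that symmetry of $P_2$ is essential so that $B$ is a legitimate symmetric form, and you dispose of the degenerate cases $\varepsilon\in\{0,1\}$ up front. This is, in spirit, the standard energy/resolvent comparison argument found in the cited literature, so it is an appropriate and well-executed way to fill in the proof the paper leaves to references.
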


Here, doubly stochastic kernels means that the operators are reversible and the inverse is also Markov. It is in particular the case for $P(x,y)=\mu(yx^{-1})$ for some measure on a group (as the inverse is $(x,y)\mapsto\mu(xy^{-1})$).

\begin{remark}\label{gen}
If $\lambda$ is a transient measure on $\A$ and $\mu$ satisfies conditions (i) and (ii) of Lemma~\ref{constr}, then the comparison lemma by Baldi-Lohoué-Peyrière (Lemma~\ref{var}) implies that $\varepsilon\lambda+(1-\varepsilon)\mu$ satisfies all the conditions of the lemma for any $0<\varepsilon<1$. In other words, this is a way to construct non-degenerate symmetric measures on $G$ with non-trivial Poisson boundary.
\end{remark}

For completeness' sake, we show that there exist measures positive on all of $G$ that have non-trivial boundary.

\begin{lemma}
	Let $\mu$ be a measure on a group $H$ with finite entropy and non-zero asymptotic entropy and which generates $H$ as a semigroup. Then there exists a measure $\tilde{\mu}$ with support equal to $H$ that also has finite entropy and non-zero asymptotic entropy. Furthermore, if $\mu$ is symmetric, so is $\tilde{\mu}$.
\end{lemma}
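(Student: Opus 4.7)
The plan is to set $\tilde\mu := (1-\varepsilon)\mu + \varepsilon\nu$ for some $\varepsilon\in(0,1)$, where $\nu$ is a probability measure on $H$ chosen to have full support and finite entropy---for instance, enumerating $H=\{g_i\}_{i\geq 1}$ and setting $\nu(g_i)\propto 1/i^2$ works. If $\mu$ is symmetric, I would replace $\nu$ by $\frac{1}{2}(\nu+\check\nu)$, where $\check\nu(g):=\nu(g^{-1})$; this is still symmetric with full support and finite entropy. Then $\supp(\tilde\mu)=H$ since $\tilde\mu\geq\varepsilon\nu$; $\tilde\mu$ has finite entropy by the standard concavity estimate $H((1-\varepsilon)\mu+\varepsilon\nu)\leq(1-\varepsilon)H(\mu)+\varepsilon H(\nu)+\log 2$; and $\tilde\mu$ is symmetric whenever $\mu$ and $\nu$ are.

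It remains to establish non-zero asymptotic entropy of $\tilde\mu$. I would show the quantitative bound $\liminf_{n\to\infty}H(\tilde\mu^{*n})/n\geq(1-\varepsilon)\lim_{n\to\infty}H(\mu^{*n})/n$, whose right-hand side is positive by hypothesis. Consider the $\tilde\mu$-walk $\tilde W_n=Y_n\cdots Y_1$ with $Y_i$ i.i.d.\ $\tilde\mu$, and introduce independent $\mathrm{Bernoulli}(\varepsilon)$ indicators $I_i$ recording whether $Y_i$ is drawn from $\nu$ ($I_i=1$) or from $\mu$ ($I_i=0$); let $K_n$ denote the number of $\mu$-steps among the first $n$, so that $K_n/n\to 1-\varepsilon$ almost surely. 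Conditioning on $(I_1,\dots,I_n)$ together with the realised values of the $\nu$-steps, $\tilde W_n$ is a deterministic function of the $K_n$ i.i.d.\ $\mu$-variables: a product interleaving them with fixed group elements. By absorbing those fixed elements via conjugation and left/right translation, this product can be rewritten---preserving entropy---as a product of $K_n$ independent variables, each distributed as a conjugate $\mu^{g_j}$ of $\mu$ for some fixed $g_j$. If one can show the key inequality $H(\tilde W_n\mid I,\,\nu\text{-values})\geq H(\mu^{*K_n})$, averaging yields $H(\tilde\mu^{*n})\geq\mathbb{E}[H(\mu^{*K_n})]$; combined with $H(\mu^{*m})/m$ converging to the asymptotic entropy of $\mu$ and $K_n/n\to 1-\varepsilon$ a.s., dividing by $n$ and letting $n\to\infty$ yields the required inequality.

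The hard part will thus be the inequality $H(X_1\cdots X_m)\geq H(\mu^{*m})$, valid whenever $X_1,\dots,X_m$ are independent with each $X_j$ distributed as a conjugate of $\mu$. I expect this to follow by induction on $m$, applying the identity $H(WY)=H(W)+H(Y)-H(W\mid WY)$ for independent $W,Y$ and comparing the conditional term $H(W\mid WY)$ between the conjugated and non-conjugated cases; intuitively, non-trivial conjugation between factors reduces cancellation in consecutive increments, so the product is at least as spread out as the i.i.d.\ convolution. A possible alternative is to analyse the induced walk $\tilde V_k=\tilde W_{T_k}$ sampled at the $\mu$-step times, whose step distribution is $\mu*\xi$ for an explicit $\xi$ supported on products of geometrically many $\nu$-steps; but comparing the asymptotic entropy of $\mu*\xi$ with that of $\mu$ runs into the same essential difficulty.
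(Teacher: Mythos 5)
Your proposal takes a different route from the paper, and the route has a genuine gap. The paper sets $\tilde{\mu}=\frac{1}{e}\sum_{i\in\N}\frac{\mu^{*i}}{i!}$, the Poissonization of $\mu$: full support is immediate since $\supp(\mu)$ generates $H$ as a semigroup (every $h\in H$ lies in $\supp(\mu^{*i})$ for some $i$), symmetry is inherited by each $\mu^{*i}$, and the asymptotic entropy is identified exactly as $h(H,\tilde{\mu})=h(H,\mu)\sum_{i}\frac{i}{e\,i!}=h(H,\mu)$ by a cited result of Kaimanovich. Your mixing construction $\tilde\mu=(1-\varepsilon)\mu+\varepsilon\nu$ would be a more elementary alternative if the key inequality you reduce to were true, but it is not.

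The inequality $H(X_1\cdots X_m)\geq H(\mu^{*m})$ for independent $X_j$ distributed as conjugates of $\mu$ is false. Writing $X_j=g_jZ_jg_j^{-1}$ with $Z_j$ i.i.d.\ $\mu$ and using invariance of entropy under left and right translation, the claim is equivalent to $H(Z_1 c_1 Z_2 c_2\cdots c_{m-1}Z_m)\geq H(Z_1\cdots Z_m)$ for arbitrary constants $c_j$. This already fails for $m=2$: in the free group $F_2=\langle a,b\rangle$ take $\mu=\frac{1}{2}(\delta_a+\delta_b)$ and $c_1=a^{-1}$. Then $Z_1Z_2$ is uniform on $\{a^2,ab,ba,b^2\}$ with entropy $2\log 2$, whereas $Z_1a^{-1}Z_2$ takes the values $a$, $b$, $b$, $ba^{-1}b$, i.e.\ equals $b$ with probability $\frac{1}{2}$ and each of $a$, $ba^{-1}b$ with probability $\frac{1}{4}$, so its entropy is $\frac{3}{2}\log 2<2\log 2$. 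Inserting a fixed group element between increments can thus \emph{increase} cancellation rather than decrease it, contrary to the intuition you invoke, and the chain from this lemma to $\liminf_n H(\tilde\mu^{*n})/n\geq(1-\varepsilon)h(H,\mu)$ breaks down. The alternative you mention in passing --- working with the walk sampled at the $\mu$-step times, whose step distribution is a mixture of convolutions involving $\mu$ --- is much closer in spirit to the paper's approach; the paper sidesteps the difficulty entirely by choosing $\tilde\mu$ itself to be a mixture of the powers $\mu^{*i}$, for which Kaimanovich's entropy formula applies directly.
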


\begin{proof}
Define $\tilde{\mu}=\frac{1}{e}\sum_{i\in\N}\frac{\mu^{*i}}{i!}$. By a result of Kaimanovich~\cite[Corollary~to~Theorem~4]{entrlemma} we get

$$h(H,\tilde{\mu})=h(H,\mu)\sum_{i\in\N}\frac{i}{ei!}=h(H,\mu).$$

Moreover, as the entropy of $\tilde{\mu}^{*n}$ is not smaller than the entropy of $\tilde{\mu}$, finite asymptotic entropy implies finite entropy.
%
%
%
%
\end{proof}

From this lemma and the entropy criterion Theorem~\ref{entropy} it follows that to have a measure positive on all of $G$ with non-trivial boundary it suffices to construct a measure verifying the conditions of Lemma~\ref{constr} with finite asymptotic entropy, which we can achieve with the construction presented in Remark~\ref{gen}.

\section{Thompson's group as a subgroup of $G=H(\Z)$}\label{thompsect}
In~\cite{chainhomeo} Kim, Kuberda and Lodha show that any two increasing homeomorphisms of $\R$ the supports of which form a 2-chain (as they call it) generate, up to taking a power of each, a group isomorphic to Thompson's group $F$. Let us give the exact definition of this term. For a homeomorphism $f$ of $\R$ we call its support $\supp(f)$ the set of points $x$ where $f(x)\neq x$. Remark that we do not define the closure of that set as support, as it is sometimes done. Consider four real numbers $a,b,c,d$ with $a<b<c<d$. Take two homeomorphisms $f$ and $g$ such that $\supp(f)=(a,c)$ and $\supp(g)=(b,d)$. In that case we say that their supports form a 2-chain, and the homeomorphisms generate a 2-prechain group. In other words, two homeomorphisms generate a 2-prechain if their supports are open intervals that intersect each other but neither is contained in the other.

Clearly, there exist many such pairs in $G$. We will give a simple example. Fix $s$ and find positive rational numbers $\tilde{r}$ and $\tilde{r}'$ such that $\tilde{r}<s<\tilde{r}+\tilde{r}'\sqrt{p_s}<t$. Recall that $p_s$ is a prime larger than $k$. Then choose a hyperbolic element $\tilde{g}$ that fixes $\tilde{r}+\tilde{r}'\sqrt{p_s}$ and define

\begin{equation*}\tilde{h}_s(x)=\begin{cases}
\tilde{g}_s(x) & \tilde{r}-\tilde{r}'\sqrt{p_s}\leq x\leq\tilde{r}+\tilde{r}'\sqrt{p_s} \\
x & \mbox{otherwise.} \\
\end{cases}
\end{equation*}

By definition of $\tilde{r}$ and $\tilde{r}'$, $\tilde{h}_s$ and $h_s$ clearly form a 2-prechain, and thus up to a power they generate a copy of Thompson's group (see~\cite[Theorem~3.1]{chainhomeo}). We will denote $\mathfrak{a}_s$ the action $F\curvearrowright\R$ this defines. To obtain the convergence results, we need to prove that the induced random walks on the Schreier graphs of certain points are transient. By the comparison lemma by Baldi-Lohoué-Peyrière (Lemma~\ref{var}) it would suffice to prove it for the simple random walk on the graph, which is why we will study its geometry. In the dyadic representation of Thompson's group, the geometry of the Schreier graph on dyadic numbers has been described by Savchuk~\cite[Proposition~1]{slav10}. It is a tree quasi-isometric to a binary tree with rays attached at each point (see Figure~\ref{sav}), which implies transience of the simple random walk. For a different proof of transience see Kaimanovich~\cite[Theorem~14]{kaimanovichthompson}. We will see that the Schreier graph has similar geometry in the case of $\mathfrak{a}_s$ (see Figure~\ref{treefig}).

\begin{lemma}\label{tree-old}
Consider two homeomorphisms $f$ and $g$ of $\R$ the supports of which are $\supp(f)=(a,c)$ and $\supp(g)=(b,d)$ with $a<b<c<d$. Denote $H$ the group generated by $f$ and $g$. Then the simple random walk on the Schreier graph of $H$ on the orbit of $b$ is transient. 
\end{lemma}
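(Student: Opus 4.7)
The plan is to prove transience of the simple random walk on the Schreier graph $\Sigma$ of $(H,\{f,g\})$ on $Hb$ by exhibiting a unit flow from $b$ to infinity of finite energy. Since inverting a generator leaves $\Sigma$ unchanged, I would assume without loss of generality that $f, g$ are orientation-preserving with $f(x) > x$ on $(a,c)$ and $g(x) > x$ on $(b,d)$. The orbit partitions as $Hb = X_L \sqcup X_M \sqcup X_R$ with $X_L = Hb \cap (a,b]$, $X_M = Hb \cap (b,c)$, $X_R = Hb \cap [c,d)$; by the support assumptions, $g$ fixes each point of $X_L$ (its $g$-edges are loops) and symmetrically $f$ fixes each point of $X_R$. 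In particular $b \in X_L$ has $f(b) \in X_M$ as its unique non-loop neighbour.

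The central structural observation is that the portion of $\Sigma$ inside $X_M$ contains a quasi-isometric copy of a rooted binary tree, analogous to the binary-tree skeleton in the Savchuk description of the Schreier graph of Thompson's $F$ on dyadics \cite{slav10}. At each $x \in X_M$ the four non-loop neighbours are $f^{\pm 1}(x), g^{\pm 1}(x)$, and the two forward generators $f,g$ provide a natural binary branching. I would show that the forward-semigroup action of $\{f,g\}^*$ on $f(b)$ is sufficiently free for distinct positive words to produce distinct orbit points: if $w(f(b)) = w'(f(b))$, the element $w^{-1}w'$ stabilises $f(b) \in (b,c)$, which the 2-prechain overlap dynamics analysed in Kim--Koberda--Lodha \cite{chainhomeo} rule out. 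This embeds a binary tree rooted at $f(b)$ into $X_M$.

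Given this binary-tree subgraph, the flow is constructed in the standard way: send one unit along $\{b,f(b)\}$ and halve the flow at each branching vertex inside $X_M$. Edges at combinatorial depth $n$ then carry flow $2^{-n}$, and since at most $2^n$ edges sit at depth $n$, the total energy is bounded by $\sum_{n \geq 0} 2^n \cdot 4^{-n} = 2 < \infty$. A unit flow of finite energy from $b$ to infinity in a subgraph gives finite effective resistance from $b$ to infinity there, and by monotonicity of effective resistance under addition of edges the same bound transfers to the full graph $\Sigma$, yielding transience of the simple random walk.

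The hard part is the structural claim that the branching really produces a binary tree rather than collapsing onto a single ray. Even if $g(x)$ sometimes leaks from $X_M$ into $X_R$ for certain $x$ (which would make the $g$-forward child useless for propagation since the $X_R$-rays are one-dimensional and their individual effective resistance is infinite), the backward generator $g^{-1}$ always keeps $X_M$-points inside $X_M$ and provides a secondary branching direction. The careful accounting — showing that at each step of the flow two independent directions remain available inside $X_M$, and that the freeness claim survives the replacement of $g$ by $g^{-1}$ at problematic vertices — is the main technical obstacle, and rests on the explicit 2-prechain dynamics of $f,g$ on their overlap interval $(b,c)$.
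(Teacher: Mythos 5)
Your overall strategy — isolate the part of the Schreier graph inside $[b,c]$, exhibit a rooted binary tree there, and deduce transience (you via a finite-energy flow and Rayleigh monotonicity, the paper via degree-$3$ trees being transient) — matches the paper's. The gap is in the step you yourself flag as the ``main technical obstacle'': you have not actually established that a binary tree sits inside $X_M$, and the shortcut you propose does not work.

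Specifically, your reduction ``if $w(f(b))=w'(f(b))$ then $w^{-1}w'$ stabilises $f(b)$, which Kim--Koberda--Lodha rule out'' is flawed: the stabiliser of $f(b)$ in $H\cong F$ is far from trivial (in Thompson's group the stabiliser of any orbit point is a large subgroup), so an element fixing $f(b)$ is not a contradiction. What you actually need is the sharper dynamical statement that distinct \emph{positive} words in the branching pair $\{f,g^{-1}\}$ (not $\{f,g\}$ — you already noticed $g$ can push orbit points out of $(b,c)$ into the one-dimensional rays) send $f(b)$ to distinct points of $(b,c)$, and this is not a consequence of the algebraic isomorphism with $F$; it is a ping-pong argument. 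The paper supplies exactly this: after replacing $f,g$ by powers so that $g^{-1}(c)<f(b)$, it sets $A=[b,g^{-1}(c)]$, $B=[f(b),c]$, $C=(g^{-1}(c),f(b))$ and proves by induction on the length of a reduced word describing a path inside $[b,c]$ that any such path arriving in $A$ must end with $g^{-1}$ and any arriving in $B$ must end with $f$ (and that $C$ is never hit). This yields uniqueness of the reduced path from $b$, hence that the restricted subgraph $\tilde\Gamma$ is a tree, and a direct check then shows every non-root vertex has three neighbours in $\tilde\Gamma$. Your plan needs this inductive ping-pong to be carried out; as written, the binary-tree embedding is asserted but not proved, and the appeal to the citation for it is incorrect.

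Two smaller points: (i) you should pass to powers of $f$ and $g$ before arguing, so that the images $g^{-1}([b,c])$ and $f([b,c])$ are disjoint — without this the ping-pong decomposition does not set up; (ii) the branching pair really must be $\{f,g^{-1}\}$ throughout (both push toward the interior of $[b,c]$), and the ``parent'' direction alternates between $g$ and $f^{-1}$ according to whether the vertex lies in $A$ or $B$ — this is what makes the induction work, and it is more delicate than ``halve the flow at each vertex.''
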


\begin{proof}
Up to replacing $f$ or $g$ with its inverse, we can assume without loss of generality that $f(x)>x$ for $x\in\supp(f)$ and $g(x)>x$ for $x\in\supp(g)$. Denote by $\Gamma$ the Schreier graph of $H$ on the orbit of $b$. The vertices of this graph are the points of the orbit $Hb$ of $b$ by $H$, and two points are connected by an edge if and only if $f$, $g$, $f^{-1}$ or $g^{-1}$ sends one point into the other. Denote by $\tilde{\Gamma}$ the subgraph defined by the vertexes that belong to the closed interval $[b,c]$. At every point $x$ of $\Gamma$ such that $x\notin[b,c]$, in a neighbourhood $(x-\varepsilon,x+\varepsilon)$ of $x$, one of the two elements $f$ and $g$ acts trivially, and the other one is strictly greater than the identity map. Without loss of generality, let $f$ act trivially. Let $i_0$ be the largest integer such that $g^{i_0}(x)\in[b,c]$. Then the set of points $(g^i(x))_{i\geq i_0}$ is a ray that starts at an element of $\tilde{\Gamma}$. As the simple random walk on $\Z$ is recurrent (see~\cite[Chapter~3,~Theorem~2.3]{durrett2005probability}), the walk always returns to $\tilde{\Gamma}$ in finite time, and that part of the graph ($\tilde{\Gamma})$ is what we need to study.

Replacing, if necessary, $f$ or $g$ by its power, we can assume that $g^{-1}(c)<f(b)$. Denote $A=[b,g^{-1}(c)]=g^{-1}([b,c])$, $B=[f(b),c]=f([b,c])$ and $C=(g^{-1}(c),f(b))=[b,c]\setminus(A\cup B)$. Consider $x\in\tilde{\Gamma}$ with $x\neq b$ and $x\notin C$. Consider a reduced word $c_nc_{n-1}\dots c_1$ with $c_i\in\{f^{\pm1},g^{\pm1}\}$ that describes a path in $\tilde{\Gamma}$ from $b$ to $x$. In other words $c_nc_{n-1}\dots c_1(b)=x$ and the suffixes of that word satisfy $c_ic_{i-1}\dots c_1(b)\in\tilde{\Gamma}$ for every $i\leq n$. The fact that the word is reduced means that $c_i\neq c_{i+1}^{-1}$ for every $i$. We claim that if $x\in A$, this word ends with $g^{-1}=c_n$, and if $x\in B$, $c_n=f$.

We prove the latter statement by induction on the length of the word $n$. If a word of length one, it is $g$ since $f$ fixes $b$ and since $g^{-1}(b)\notin [b,c]$. As $g(b)\in B$ this gives the base for the induction. 

Assume that the result is true for any reduced word of length strictly less than $n$ whose suffixes, when applied to $b$, stay in $[b,c]$. We will now prove it for $x=c_nc_{n-1}\dots c_1(b)$. We denote $y=c_{n-1}c_{n-2}\dots c_1(b)$ the point just before $x$ in that path. We first consider the case $x\in B$ (as we will see from the proof, the other case is equivalent). We distinguish three cases: $y\in A$, $y\in B$ and $y\in C$.

If $y\in A$, by induction hypothesis we have $c_{n-1}=g^{-1}$. As the word is reduced we thus have $c_n\neq g$. However, from $y\in A$ and $x\in B$ we have $y<x$. Therefore, $c_n\notin\{f^{-1},g^{-1}\}$, and the only possibility left is $c_n=f$.

If $y\in B$, by induction hypothesis we have $c_{n-1}=f$. Therefore, as the word is reduced, $c_n\neq f^{-1}$. From $g^{-1}(c)<f(b)$ it follows that $g(B)\cap[b,c]=\emptyset$. As $x\in B$, this implies that $c_n\neq g$. Similarly, $g^{-1}(B)\subset A$, therefore $c_n\neq g^{-1}$. The only possibility left is $c_n=f$.

If $y\in C$, consider the point $y'=c_{n-2}\dots c_1(b)$. If $y'\in A$, by induction hypothesis $c_{n-2}=g^{-1}$. Then $c_{n-1}\neq g$. As $y>y'$, this implies that $c_{n-1}=f$. However, $g(A)\subset B$, which is a contradiction. In a similar way, we obtain a contradiction for $y'\in B$. However, both $f^{-1}(C)$ and $g(C)$ are outside $[b,c]$, while $f(C)\subset B$ and $g^{-1}(C)\subset A$. Therefore the case $y\in C$ is impossible by induction hypotheses on $c_{n-2}\dots c_1$.

This completes the induction. Remark that we also obtained $\tilde{\Gamma}\cap C=\emptyset$, so the result holds for all points of $\tilde{\Gamma}$. In particular, if two paths in $\tilde{\Gamma}$ described by reduced words arrive at the same point, the last letter in those words is the same, which implies that $\tilde{\Gamma}$ is a tree. Remark also that the result implies that $c\notin\tilde{\Gamma}$ as $c\in B$ and $f^{-1}(c)=c$.

Moreover, for a vertex $x\in A$, we have that $f(x)$, $g(x)$ and $g^{-1}(x)$ also belong to $\tilde{\Gamma}$. Similarly, for $x\in B$, $g^{-1}(x)$, $f(x)$ and $f^{-1}(x)$ are in $\tilde{\Gamma}$. Therefore every vertex aside from $b$ has three different neighbours. The simple walk on $\tilde{\Gamma}$ is thus transient.
\end{proof}

By the comparison lemma by Baldi-Lohoué-Peyrière (Lemma~\ref{var}), this implies transience on the Schreier graph of $s$ for any measure on $G$ such that $h_s$ and $\bar{h}_s$ are in the semigroup generated by the support of the measure. If the support of a given measure generates $G$ as a semigroup, conditions $(i)$ and $(iii)$ in Lemma~\ref{base} are then automatically satisfied. In particular, any measure $\mu$ on $G$ that generates it as a semigroup and such that there exists $s$ for which $\supp(\mu)\cap(G\setminus H_s)$ is finite has a non-trivial Poisson boundary.

In the proof of Lemma~\ref{tree-old} we obtained a description of the graph of $\mathfrak{a}_s$, which is similar to the one by Savchuk~\cite{slav10} in the case of the dyadic action:

\begin{remark}\label{tree}
Consider two homeomorphisms $f$ and $g$ of $\R$ the supports of which are $\supp(f)=(a,c)$ and $\supp(g)=(b,d)$ with $a<b<c<d$. Denote $H$ the group generated by $f$ and $g$. Then the Schreier graph of $H$ on the orbit of $b$ is described in Figure~\ref{treefig} (solid lines are labelled by $f$ and dashed lines by $g$).

\begin{figure}[!h]\caption{Schreier graph of $\mathfrak{a}_s$}\label{treefig}\centering\begin{tikzpicture}[-stealth]
\tikzset{node/.style={circle,draw,inner sep=0.7,fill=black}}
\tikzset{every loop/.style={min distance=8mm,in=55,out=125,looseness=10}}
\tikzstyle{level 1}=[level distance=2.4cm,sibling distance=3cm]
\tikzstyle{level 2}=[level distance=2.4cm,sibling distance=12mm]
\tikzstyle{level 3}=[level distance=1.5cm,sibling distance=5mm]
\tikzstyle{level 4}=[level distance=1cm,sibling distance=5mm]

\node[node,label=below:{$b$}](0){}
child[grow=left,<-]{node[node](-1){}}
child[grow=right,->]{node[node]{}
	child[grow=south west,<-,dashed]{node[node]{}
		child[grow=south west,<-,dashed]{node[node]{}
			child[grow=south west,<-,dashed]{}
			child[grow=south east,->,solid]{}
			child[grow=left,<-,solid,level distance=1cm]{node[node](1){} child[grow=left,level distance=1cm]{node[node](8){}}}}
		child[grow=south east,->,solid]{node[node]{}
			child[grow=south west,<-,dashed]{}
			child[grow=south east,->,solid]{}
			child[grow=right,->,dashed,level distance=0.2cm]{node[node](9){} child[grow=right,level distance=0.2cm]{node[node](10){}}}}
		child[grow=left,<-,solid,level distance=1.5cm]{node[node](2){} child[grow=left,level distance=1.5cm]{node[node](11){}}}}
	child[grow=south east,->,solid]{node[node]{}
		child[grow=south west,<-,dashed]{node[node]{}
			child[grow=south west,<-,dashed]{}
			child[grow=south east,->,solid]{}
			child[grow=left,<-,solid,level distance=0.2cm]{node[node](3){} child[grow=left,level distance=0.2cm]{node[node](3b){}}}}
		child[grow=south east,->,solid]{node[node]{}
			child[grow=south west,<-,dashed]{}
			child[grow=south east,->,solid]{}
			child[grow=right,->,dashed,level distance=1cm]{node[node](4){} child[grow=right,level distance=1cm]{node[node](4b){}}}}
		child[grow=right,->,dashed,level distance=1.5cm]{node[node](6){} child[grow=right,level distance=1.5cm]{node[node](6b){}}}}
	child[grow=right,->,dashed,level distance=2.4cm]{node[node](5){} child[grow=right, level distance=2.4cm]{node[node](7){}}}
};
\draw (0) edge[loop above,dashed] (0);
\draw (-1) edge[loop above,dashed] (-1);
\draw (1) edge[loop above,dashed] (1);
\draw (2) edge[loop above,dashed] (2);
\draw (3) edge[loop above,dashed] (3);
\draw (3b) edge[loop above,dashed] (3b);
\draw (4) edge[loop above] (4);
\draw (4b) edge[loop above] (4b);
\draw (5) edge[loop above] (5);
\draw (6) edge[loop above] (6);
\draw (6b) edge[loop above] (6b);
\draw (7) edge[loop above] (7);
\draw (8) edge[loop above,dashed] (8);
\draw (9) edge[loop above] (9);
\draw (10) edge[loop above] (10);
\draw (11) edge[loop above,dashed] (11);

\end{tikzpicture}\end{figure}
\end{remark}

\begin{proof}
In the proof of Lemma~\ref{tree-old} we have shown that for every vertex $x\in\tilde{\Gamma}$ that is not $b$, $x$ has exactly three different neighbours in $\tilde{\Gamma}$. We also proved that $\tilde{\Gamma}$ is a tree. It is therefore a binary tree. Furthermore, if $x\in A$, it is equal to $g^{-1}(y)$ where $y$ is closer to $b$ than $x$ (in the graph), and if $x\in B$, $x=f(y)$ where $y$ is again closer to $b$. We think of $y$ as the parent of $x$. Then every vertex $x$ has two children: left child $g^{-1}(x)$ and right child $f(x)$. Furthermore, if $x$ is a left child, $x\in A$ and $f^{-1}(x)\notin\tilde{\Gamma}$. Equivalently, if $x$ is a right child, $g(x)\notin\tilde{\Gamma}$.\end{proof}

Compare to the Schreier graph of the dyadic action as described by Savchuk~\cite[Proposition~1]{slav10}(see Figure~\ref{sav}).

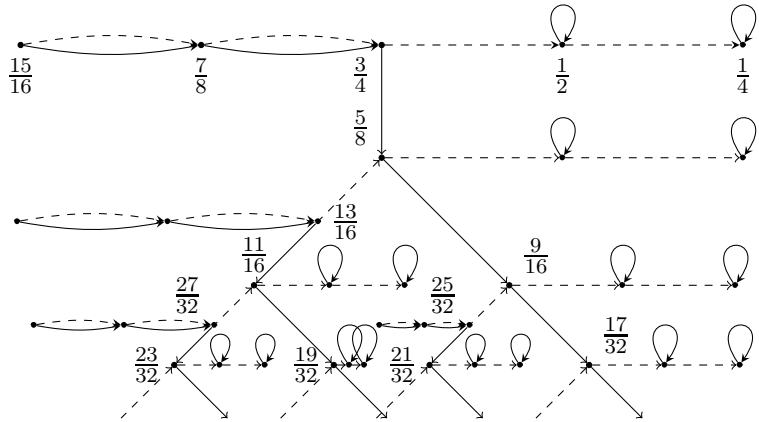
\begin{figure}[!h]\centering\caption{Schreier graph of the dyadic action of $F$ for the standard generators}\label{sav}\begin{tikzpicture}[-stealth]
\tikzset{no edge/.style={edge from parent/.append style={draw=none}}}
\tikzset{node/.style={circle,draw,inner sep=0.7,fill=black}}
\tikzset{every loop/.style={min distance=8mm,in=55,out=125,looseness=10}}
\tikzstyle{level 1}=[level distance=2.4cm,sibling distance=3cm]
\tikzstyle{level 2}=[level distance=2.4cm,sibling distance=12mm]
\tikzstyle{level 3}=[level distance=1.5cm,sibling distance=5mm]
	\tikzstyle{level 4}=[level distance=1cm,sibling distance=5mm]
	
\node[node,label=south west:{$\frac{3}{4}$}](34){}
child[grow=left,<-,level distance=2.4cm]{[no edge] node[node,label=below:{$\frac{7}{8}$}](78){}child[grow=left,<-,level distance=2.4cm]{[no edge] node[node,label=below:{$\frac{15}{16}$}](1516){}}}
child[grow=right,level distance=2.4cm,dashed]{node[node,label=below:{$\frac{1}{2}$}](12){}child[grow=right,level distance=2.4cm,dashed]{node[node,label=below:{$\frac{1}{4}$}](14){}}}
child[grow=down,->,level distance=1.5cm]{node[node,label=north west:{$\frac{5}{8}$}]{}
	child[grow=south west,<-,dashed,level distance=1.2cm]{node[node,label=right:{$\frac{13}{16}$}](1316){} 
		child[grow=left,<-,level distance=2cm]{[no edge] node[node](1316a){}child[grow=left,<-,level distance=2cm]{[no edge] node[node](1316b){}}}
		child[grow=south west,->,solid,level distance=1.2cm]{node[node,label=above:{$\frac{11}{16}$}](1116){}
		child[grow=south west,<-,dashed,level distance=7.5mm]{node[node,label=north west:{$\frac{27}{32}$}](2732){}
			child[grow=left,<-,level distance=1.2cm]{[no edge] node[node](2732a){}child[grow=left,<-,level distance=1.2cm]{[no edge] node[node](2732b){}}}
			child[grow=south west,->,solid,level distance=7.5mm]{node[node,label=left:{$\frac{23}{32}$}](2332){}
			child[grow=south west,<-,dashed,level distance=1cm]{}
			child[grow=south east,->,solid,level distance=1cm]{}
			child[grow=right,->,dashed,level distance=6mm]{node[node](1){} child[grow=right,level distance=6mm]{node[node](8){}}}}}
		child[grow=south east,->,solid,level distance=1.5cm]{node[node,label=left:{$\frac{19}{32}$}]{}
			child[grow=south west,<-,dashed,level distance=1cm]{}
			child[grow=south east,->,solid,level distance=1cm]{}
			child[grow=right,->,dashed,level distance=0.2cm]{node[node](9){} child[grow=right,level distance=0.2cm]{node[node](10){}}}}
		child[grow=right,->,dashed,level distance=1cm]{node[node](2){} child[grow=right,level distance=1cm]{node[node](11){}}}}}
	child[grow=south east,->,solid,level distance=2.4cm]{node[node,label=north east:{$\frac{9}{16}$}]{}
		child[grow=south west,<-,dashed,level distance=7.5mm]{node[node,label=north west:{$\frac{25}{32}$}](2532){}
			child[grow=left,<-,level distance=0.6cm]{[no edge] node[node](2532a){}child[grow=left,<-,level distance=0.6cm]{[no edge] node[node](2532b){}}}
			child[grow=south west,->,solid,level distance=7.5mm]{node[node,label=left:{$\frac{21}{32}$}](2332){}
			child[grow=south west,<-,dashed,level distance=1cm]{}
			child[grow=south east,->,solid,level distance=1cm]{}
			child[grow=right,->,dashed,level distance=0.6cm]{node[node](3){} child[grow=right,level distance=0.6cm]{node[node](3b){}}}}}
		child[grow=south east,->,solid,level distance=1.5cm]{node[node,label=north east:{$\frac{17}{32}$}]{}
			child[grow=south west,<-,dashed,level distance=1cm]{}
			child[grow=south east,->,solid,level distance=1cm]{}
			child[grow=right,->,dashed,level distance=1cm]{node[node](4){} child[grow=right,level distance=1cm]{node[node](4b){}}}}
		child[grow=right,->,dashed,level distance=1.5cm]{node[node](6){} child[grow=right,level distance=1.5cm]{node[node](6b){}}}}
	child[grow=right,->,dashed,level distance=2.4cm]{node[node](5){} child[grow=right, level distance=2.4cm]{node[node](7){}}}
};
\draw (1516) edge[bend right=10] (78);
\draw (78) edge[bend right=10] (34);
\draw (1516) edge[bend left=10,dashed] (78);
\draw (78) edge[bend left=10,dashed] (34);
\draw (1316b) edge[bend right=10] (1316a);
\draw (1316a) edge[bend right=10] (1316);
\draw (1316b) edge[bend left=10,dashed] (1316a);
\draw (1316a) edge[bend left=10,dashed] (1316);
\draw (2732b) edge[bend right=10] (2732a);
\draw (2732a) edge[bend right=10] (2732);
\draw (2732b) edge[bend left=10,dashed] (2732a);
\draw (2732a) edge[bend left=10,dashed] (2732);
\draw (2532b) edge[bend right=10] (2532a);
\draw (2532a) edge[bend right=10] (2532);
\draw (2532b) edge[bend left=10,dashed] (2532a);
\draw (2532a) edge[bend left=10,dashed] (2532);
\draw (12) edge[loop above] (12);
\draw (14) edge[loop above] (14);
\draw (1) edge[loop above,min distance=6mm,in=55,out=125,looseness=10] (1);
\draw (2) edge[loop above] (2);
\draw (3) edge[loop above,min distance=6mm,in=55,out=125,looseness=10] (3);
\draw (3b) edge[loop above,min distance=6mm,in=55,out=125,looseness=10] (3b);
\draw (4) edge[loop above] (4);
\draw (4b) edge[loop above] (4b);
\draw (5) edge[loop above] (5);
\draw (6) edge[loop above] (6);
\draw (6b) edge[loop above] (6b);
\draw (7) edge[loop above] (7);
\draw (8) edge[loop above,min distance=6mm,in=55,out=125,looseness=10] (8);
\draw (9) edge[loop above] (9);
\draw (10) edge[loop above] (10);
\draw (11) edge[loop above] (11);
\end{tikzpicture}\end{figure}

\section{Schreier graphs of finitely generated subgroups of $H(\Z)$ and $\G$}\label{schreier}

We will build on the result from Remark~\ref{tree}. In a more general case, the comparison lemma by Baldi-Lohoué-Peyrière (Lemma~\ref{var}) implies that the existence of a regular subtree (like $\tilde{\Gamma}$) is enough to ensure transience on the Schreier graph. To obtain such a tree, we only need the assumptions of the remark inside the closed interval $[b,c]$. We will now prove a lemma that ensures transience while allowing the graph to be more complicated outside $[b,c]$. This will help us understand subgroups of $G$ for which the supports of their generators are not necessarily single intervals.

\begin{lemma}\label{algtho}
Let $f,g$ be homeomorphisms on $\R$ and assume that there exist $b<c$ such that $g(b)=b$, $f(c)=c$, $(b,c]\subset\supp(g)$ and $[b,c)\subset\supp(f)$. Assume also that there exists $s\in\R$ with $s\leq b$ such that for some $n\in\Z$, $f^n(s)\in[b,c]$. Let $H$ be the subgroup of the group of homeomorphisms on $\R$ generated by $f$ and $g$. Then the simple walk of $H$ on the Schreier graph $\Gamma$ of $H$ on the orbit $s$ is transient. 
\end{lemma}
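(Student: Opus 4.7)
I would adapt the argument of Lemma~\ref{tree-old} (equivalently, Remark~\ref{tree}); the new difficulty is that we only control the dynamics of $f$ and $g$ inside $[b, c]$, while outside, the Schreier graph may be arbitrarily complicated.

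By replacing $f$ or $g$ with its inverse if needed, I may assume $f > \mathrm{id}$ on $[b, c)$ and $g > \mathrm{id}$ on $(b, c]$. Since $g$ fixes $b$ and is strictly above the identity on $(b, c]$, the iterates $g^{-k}(c)$ decrease to $b$; similarly $f^k(b)$ increases to $c$. Hence for some $k \geq 1$, $g^{-k}(c) < f^k(b)$. Adjoining $f^{\pm k}$ and $g^{\pm k}$ to the generating set leaves both $H$ and the orbit of $s$ unchanged and only adds edges to $\Gamma$; by the comparison lemma (Lemma~\ref{var}), using that each new generator is a bounded-length word in the original ones (so that $P_{\mathrm{original}}^k \geq c\, P_{\mathrm{enlarged}}$ uniformly), transience is equivalent under the two generating sets, and I proceed with the enlarged one.

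Set $A = [b, g^{-k}(c)]$, $B = [f^k(b), c]$, $C = (g^{-k}(c), f^k(b))$, and let $\tilde{\Gamma}$ be the subgraph of the Schreier graph $\Gamma$ induced on the orbit of $s$ intersected with $[b, c]$; this is nonempty by the assumption $f^n(s) \in [b, c]$. The transport relations used in the proof of Lemma~\ref{tree-old} follow from the dynamics inside $[b, c]$ alone: $f^k(A) \subset B$, $g^{-k}(A) \subset A$, $g^{-k}(B) \subset A$, $g^k(B)$ lies above $c$, $f^{-k}(A)$ lies below $b$, and $g^{-k}(C) \subset A$, $f^k(C) \subset B$, while $g^k(C)$ and $f^{-k}(C)$ escape $[b, c]$. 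Rerunning Lemma~\ref{tree-old}'s inductive argument — now based at an arbitrary vertex of $\tilde{\Gamma}$, since $b$ need not lie in the orbit of $s$ — I conclude that any reduced word in $\{f^{\pm k}, g^{\pm k}\}$ whose successive applications all lie in $[b, c]$ and whose terminal vertex differs from the starting one has a last letter determined (as $f^k$ or $g^{-k}$) by whether its terminal vertex lies in $B$ or in $A$, and that $\tilde{\Gamma}$ avoids $C$. Therefore $\tilde{\Gamma}$ is a tree in which every non-leaf vertex has exactly three neighbors.

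An infinite $3$-regular tree supports a transient simple random walk, so $\tilde{\Gamma}$ does. Since $\tilde{\Gamma}$ sits as a subgraph of $\Gamma$ with uniformly bounded degrees, a comparison-lemma argument (extending the tree walk by laziness off $\tilde{\Gamma}$, equivalently Rayleigh's monotonicity principle) gives transience of the simple random walk on $\Gamma$, as required. The main obstacle will be adapting Lemma~\ref{tree-old}'s induction to an arbitrary starting vertex of $\tilde{\Gamma}$: the logical structure and the case analysis on which of $A, B, C$ contains the previous step carry over because they depend only on the transport relations above, but the base case must be reformulated as a trivial word at the chosen starting vertex, and the assertion $\tilde{\Gamma} \cap C = \emptyset$ is then obtained by contradiction on a shortest path entering $C$, whose penultimate step in $A$ or $B$ violates one of the transport relations.
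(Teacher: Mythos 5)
Your approach is genuinely different from the paper's, and the outline is sound. The paper never extracts an induced subgraph and invokes Rayleigh-type monotonicity. Instead it builds a second \emph{symmetric} stochastic kernel $P_2$ on all of $\Gamma$, supported on $(a,d)$, whose transition probabilities on the side intervals $(a,b)$ and $(c,d)$ alternate between $1/4$ and $3/4$ according to the parity of $n(x)$, $m(x)$, precisely so that $P_2$ is doubly stochastic and symmetric with no laziness, and then applies Lemma~\ref{var} once with $P_1\geq\frac13 P_2$. The reason the paper goes to that trouble is that the ``extend by laziness'' variant of your argument does \emph{not} pass through Lemma~\ref{var}: at a vertex $x\in[b,c]$ with no loop one has $P_1(x,x)=0$ while the lazy extension puts positive mass on $(x,x)$, so $P_1\geq\varepsilon\tilde P$ fails for every $\varepsilon>0$. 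What does work is the flow formulation you also name (a finite-energy unit flow to infinity in a subgraph is still one in the ambient graph, so transience passes up), and since you cite Rayleigh I would accept the reduction; but the two formulations are not ``equivalent,'' and only the Rayleigh one is correct here.

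The one concrete error is the claim $\tilde\Gamma\cap C=\emptyset$. In Lemma~\ref{tree-old} this comes from the root being $b$, where the base case forces the first letter to be $g$; here the base point is some $f^n(s)\in[b,c]$, which can perfectly well lie in $C$, and then the proposed ``contradiction on a shortest path entering $C$'' has nothing to contradict, since the length-zero path already ends in $C$. So $\tilde\Gamma$ is in general a forest whose components can have a degree-two root in $C$, not a tree disjoint from $C$. This does not damage the conclusion, but the argument should be rerouted: for any $x_0\in Hs\cap[b,c]$, the set $\{w(x_0):w\in\{f^k,g^{-k}\}^*\}$ stays in $[b,c]$, lies in $A\cup B$ as soon as $|w|\geq1$ since $f^k([b,c])=B$ and $g^{-k}([b,c])=A$, is an infinite binary tree because $A$ and $B$ are disjoint, and is a subgraph of $\Gamma$; Rayleigh then finishes. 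You should also state the generating-set step more carefully: $P^k_{\mathrm{original}}\geq c\,P_{\mathrm{enlarged}}$ is not literally the hypothesis of Lemma~\ref{var} as applied, and one needs in addition the (standard, but separate) fact that for symmetric $P$, transience of a power $P^k$ implies transience of $P$. Both points are routine, but as written the proof asserts something false and leans on a comparison that is not the one it needs.
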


\begin{proof}
Without loss of generality, $f(x)>x$ and $g(x)>x$ for $x\in(b,c)$ (and the end point that they do not fix). In that case clearly $n\geq0$. We will apply the comparison lemma by Baldi-Lohoué-Peyrière (Lemma~\ref{var}) with $P_1$ defined on $\Gamma$ as the kernel of the simple random walk of $H$ on $\Gamma$. In other words, $P_1(x,f(x))=P_1(x,f^{-1}(x))=P_1(x,g(x))=P_1(x,g^{-1}(x))=\frac{1}{4}$ for every $x\in\Gamma$. Let us now define $P_2$. Let $a$ be the largest fixed point of $f$ that is smaller than $b$, and $d$ the smallest fixed point of $g$ that is larger than $c$. For $x\in(a,b)$ we define $n(x)=min(n|f^n(x)\in[b,c])$. Similarly, we define for $x\in(c,d)$, $m(x)=min(m|g^{-m}\in[b,c])$. We define

\begin{equation*}\begin{minipage}{8.2cm}
$P_2(x,f(x))=\begin{cases}
\frac{1}{4} & x\in[b,c] \\
\frac{1}{4} & x\in(a,b)\mbox{ and }n(x)\mbox{ is odd}\\
\frac{3}{4} & x\in(a,b)\mbox{ and }n(x)\mbox{ is even}\\
0 & \mbox{otherwise.}\\
\end{cases}$\end{minipage}\begin{minipage}{8.3cm}
$P_2(x,f^{-1}(x))=\begin{cases}
\frac{1}{4} & x\in[b,c] \\
\frac{3}{4} & x\in(a,b)\mbox{ and }n(x)\mbox{ is odd}\\
\frac{1}{4} & x\in(a,b)\mbox{ and }n(x)\mbox{ is even}\\
0 & \mbox{otherwise.}\\
\end{cases}$\end{minipage}
\end{equation*}
\begin{equation*}\begin{minipage}{8.2cm}
$P_2(x,g(x))=\begin{cases}
\frac{1}{4} & x\in[b,c] \\
\frac{3}{4} & x\in(c,d)\mbox{ and }m(x)\mbox{ is odd}\\
\frac{1}{4} & x\in(c,d)\mbox{ and }m(x)\mbox{ is even}\\
0 & \mbox{otherwise.}\\
\end{cases}$\end{minipage}\begin{minipage}{8.3cm}
$P_2(x,g^{-1}(x))=\begin{cases}
\frac{1}{4} & x\in[b,c] \\
\frac{1}{4} & x\in(c,d)\mbox{ and }m(x)\mbox{ is odd}\\
\frac{3}{4} & x\in(c,d)\mbox{ and }m(x)\mbox{ is even}\\
0 & \mbox{otherwise.}\\
\end{cases}$\end{minipage}
\end{equation*}

Of course, we have $P_2(x,y)=0$ otherwise. This clearly defines a stochastic kernel (as the sum of probabilities at each $x$ is $1$), and it follows directly from the definition that it is symmetric. It is therefore doubly stochastic and symmetric. 

We now check that it is transient similarly to Lemma~\ref{tree-old}. Indeed, take a point $x\in[f(b),c]$ (respectively $x\in[b,g^{-1}(c)]$). Consider the subgraph $\tilde{\Gamma}(x)$ of the vertices of the form $c_nc_{n-1}\dots c_1(x)$ with $c_ic_{i-1}\dots c_1(x)\in [b,c]$ for every $i$ and $c_1\in\{f^{-1},g^{-1}\}$ (respectively $c_1\in\{g,f\}$). Equivalently to Lemma~\ref{tree-old}, $\tilde{\Gamma}(x)$ is a binary tree. Moreover, the graph $\bar{\Gamma}(x)$ defined by the vertices of the form $\tilde{c}^n(y)\in\Gamma$ with $\tilde{c}\in\{g,f^{-1}\}$, $n\in\N$ and $y\in\tilde{\Gamma}(x)$ is equivalent to the one in Lemma~\ref{tree-old}. In particular, the simple random walk on it is transient. Take any $y\in\Gamma\cap(a,d)$. Then either $f^n(y)\in[f(b),c]$ for some $n$, or $g^{-n}\in[b,g^{-1}(c)]$. In either case, there is $x$ such that $y$ belongs to $\bar{\Gamma}(x)$. By the comparison lemma by Baldi-Lohoué-Peyrière (Lemma~\ref{var}), we have $\sum_{n\in\N}\langle P_2^n\delta_y,\delta_y\rangle<\infty$. Therefore $P_2$ is transient. We apply Lemma~\ref{var} again for $P_1\geq\frac{1}{3}P_2$, which concludes the proof.
\end{proof}

Remark that with this result we can apply the comparison lemma by Baldi-Lohoué-Peyrière (Lemma~\ref{var}) to obtain transience for a random walk induced by a measure on a subgroup of the piecewise $\Pl$ group $\G$ (see Definition~\ref{tildeg}), the support of which contains two such elements and generates that subgroup as a semi-group.

For the sake of completeness, we will also consider amenability of Schreier graphs of subgroups of $\G$. A locally finite graph is called amenable if for every $\varepsilon$ there exists a finite set of vertices $S$ such that $|\partial S|/|S|<\varepsilon$ where $\partial S$ is the set of vertices adjacent to $S$. This closely mirrors F{\o}lner's criterion for amenability of groups. In particular, a finitely generated group is amenable if and only if its Cayley graph is. In his article, Savchuk~\cite{slav10} shows that the Schreier graph of the dyadic action of Thompson's group $F$ is amenable. He also mentions that it was already noted in private communication between Monod and Glasner. The amenability of the graph comes from the fact that sets with small boundary can be found in the rays (see Figure~\ref{sav}). We will prove that for finitely generated subgroups of $\G$ we can find sets quasi-isometric to rays.

\begin{remark}Consider a point $s\in\R$ and a finitely generated subgroup $H$ of the piecewise $\Pl$ group $\G$ (see Definition~\ref{tildeg}). Let $a=\sup(Hs)$. Let $S$ be a finite generating set and consider the Schreier graph $\Gamma$ defined by the action of $H$ on $Hs$. Then there is $b<a$ such that the restriction of $\Gamma$ to $(b,a)$ is a union of subgraphs quasi-isometric to rays.
\end{remark}

\begin{proof}As all elements of $H$ are continuous (when seen as functions on $\R$), they all fix $a$. Therefore they admit left germs at $a$. By definition, the germs belong to the stabiliser $St_a$ of $a$ in $PSL_2(\Z)$.
	
By Lemma~\ref{cyclic}, $St_a$ is cyclic. Let $h\in PSL_2(\Z)$ be a generator of $St_a$. Then the left germ at $a$ of any element $s_i\in S$ is equal to $h^{n_i}$ for some $n_i\in\Z$. Up to replacing $h$ with $h^{GCD(\{n_i:s_i\in S\})}$, we can assume that there exists $g\in H$ such that the left germ at $a$ of $g$ is $h$. Let $(b,a)$ be a small enough left neighbourhood such that the restrictions of all elements of $S\cup\{g\}$ to $(b,a)$ are equal to their left germs at $a$. For example, one can choose $b$ to be the largest break point of an element of $S\cup\{g\}$ that is smaller than $a$.

Consider the following equivalence relation on $Hs\cap(b,a)$: $x\sim y$ if and only if there exists $n\in\Z$ such that $h^n(x)=y$. As the restriction of $h$ to $(b,a)$ is an increasing function, an equivalence class is of the form $(h^n(x))_{n\in\N}$ for some $x\in(b,a)$. We will prove that this set is quasi-isometric to a ray (when seen as a subgraph of $\Gamma$). It is by definition of $b$ preserved by elements of $S$. Furthermore, the graph distance $d$ is bilipschitz to the standard distance $d'$ on $\N$. Indeed, on one hand, we have $d>\frac{1}{\max(|n_i|:s_i\in S)}d'$. On the other hand, $d<|g|d'$ where $|g|$ is the word length of $g$. This proves the result.
\end{proof}

This implies:

\begin{remark}\label{grapham}Consider a point $s\in\R$ and a finitely generated subgroup $H<\G$. The Schreier graph defined by the action of $H$ on $Hs$ is amenable.
\end{remark}

\section{Convergence conditions based on expected number of break points}\label{anothersuff}

The aim of this section is to describe sufficient conditions for convergence similar to Theorem~\ref{base} that do not assume leaving $C_\mu$ (which is potentially infinite). The ideas presented are similar to the arguments used in studies of measures with finite first moment on wreath products (see Kaimanovich~\cite[Theorem~3.3]{Kaimanovich1991}, Erschler~\cite[Lemma~1.1]{erschler2011}). Consider the piecewise $\Pl$ group $\G$ (see Definition~\ref{tildeg}) and a measure $\mu$ on it. We think of the measure as something that could be positive on all points of $\G$. Fix $s\in P_\Z\cup\Q$ and denote, for $g\in\G$, $A_g=\supp(C_g)$ (for $s\in\Q$, see discussion after Definition\ref{confdef} and after the proof of Lemma~\ref{log}). Take $x\in Gs$ and consider a random walk $(g_n)_{n\in\N}$ with increments $h_n$, that is $g_{n+1}=h_ng_n$. Then by (\ref{der}),

$$C_{g_n}(x)\neq C_{g_{n+1}}(x)\iff g_n(x)\in A_{h_n}.$$

In other words, $C_{g_n}(x)$ converges if and only if $g_n(x)\in A_{h_n}$ only for a finite number of values of $n$. For a fixed $n$, the probability that $g_n(x)$ belongs to $A_{h_n}$ is

$$\langle p^{*n}\delta_x,\sum_{h\in\G}\mu(h)\chi_{A_h}\rangle$$
where $p$ is the induced kernel on $Gs$. Taking the sum over $n$ we get:

\begin{lemma}\label{conv}
	Fix $\mathfrak{o}\in Gs$. For a random walk $g_n$ on $\G$ with law $\mu$, the value $C_{g_n}(\mathfrak{o})$ converges with probability $1$ if and only if
	
$$\sum_{n\in\N}\langle p^{*n}\delta_\mathfrak{o},\sum_{h\in\G}\mu(h)\chi_{A_h}\rangle<\infty$$
where $p$ is the induced kernel on $Gs$.
\end{lemma}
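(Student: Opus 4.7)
The plan is to recognize convergence of $C_{g_n}(\mathfrak{o})$ as finiteness of a sum of indicators, compute the expectation of that sum in closed form, and invoke Borel--Cantelli.

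By the cocycle identity (\ref{der}) applied to $g_{n+1} = h_n g_n$ at the point $\mathfrak{o}$,
\[
C_{g_{n+1}}(\mathfrak{o}) - C_{g_n}(\mathfrak{o}) \;=\; C_{h_n}\bigl(g_n(\mathfrak{o})\bigr),
\]
an integer which is nonzero precisely when $g_n(\mathfrak{o}) \in A_{h_n}$. Setting $E_n := \{g_n(\mathfrak{o}) \in A_{h_n}\}$ and using that each nonzero jump of the integer-valued sequence $C_{g_n}(\mathfrak{o})$ has absolute value at least $1$, the sequence converges if and only if $E_n$ occurs only finitely often. Thus almost-sure convergence is equivalent to $N := \sum_n \mathbf{1}_{E_n} < \infty$ a.s.

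Next I compute $P(E_n)$ by conditioning on the past. Since $g_n$ is measurable with respect to $\mathcal{F}_{n-1} := \sigma(h_0, \ldots, h_{n-1})$ while $h_n$ is independent of $\mathcal{F}_{n-1}$ with law $\mu$, writing $F(x) := \sum_{h} \mu(h)\chi_{A_h}(x) = \mu\{h : x \in A_h\}$ and $X_n := g_n(\mathfrak{o})$ gives
\[
P(E_n \mid \mathcal{F}_{n-1}) \;=\; F(X_n).
\]
Taking expectations, and using that $X_n$ is distributed as $p^{*n}\delta_\mathfrak{o}$ under the induced chain, yields $P(E_n) = \langle p^{*n}\delta_\mathfrak{o}, F\rangle$, so the series in the statement is exactly $\sum_n P(E_n) = E[N]$.

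Borel--Cantelli then closes the equivalence. For the ``if'' direction, finiteness of $\sum_n P(E_n)$ forces $E[N] < \infty$, so $N < \infty$ a.s.\ and $C_{g_n}(\mathfrak{o})$ converges a.s. For the ``only if'' direction I intend to apply L\'evy's extension of the second Borel--Cantelli lemma along $(\mathcal{F}_n)$, which gives, up to a null set, $\{N = \infty\} = \{\sum_n F(X_n) = \infty\}$; combined with $E[\sum_n F(X_n)] = \sum_n \langle p^{*n}\delta_\mathfrak{o}, F\rangle$, this produces the converse. The step I expect to require the most care is the conditioning that produces $P(E_n) = \langle p^{*n}\delta_\mathfrak{o}, F\rangle$: it is precisely the independence of the fresh increment $h_n$ from the past $\mathcal{F}_{n-1}$ (which determines $g_n$, hence $X_n$) that decouples the induced-walk factor $p^{*n}\delta_\mathfrak{o}$ from the single-step labelling $F$ captured by $\mu$. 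Once this decoupling is in hand, the rest is a routine Borel--Cantelli invocation.
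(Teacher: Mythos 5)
Your ``if'' direction is correct and matches the paper's (very terse) argument exactly: the cocycle identity identifies the jump times of $C_{g_n}(\mathfrak{o})$ with the events $E_n=\{g_n(\mathfrak{o})\in A_{h_n}\}$, the integer-valuedness turns a.s.\ convergence into ``$E_n$ finitely often'', independence of the fresh increment $h_n$ from $\sigma(h_0,\dots,h_{n-1})$ gives $P(E_n)=\langle p^{*n}\delta_{\mathfrak{o}},F\rangle$ with $F=\sum_h\mu(h)\chi_{A_h}$, and first Borel--Cantelli then yields a.s.\ convergence when the series is finite. That is precisely what the paper does in the paragraph preceding the lemma.

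The ``only if'' direction as you sketch it has a genuine gap. L\'evy's extension does give $\{N=\infty\}=\{\sum_n F(X_n)=\infty\}$ up to a null set, and the identity $E[\sum_n F(X_n)]=\sum_n\langle p^{*n}\delta_{\mathfrak{o}},F\rangle$ is correct. But you then need to pass from ``$\sum_n F(X_n)<\infty$ a.s.'' to ``$E[\sum_n F(X_n)]<\infty$'', and the phrase ``this produces the converse'' does not justify that step: a.s.\ finiteness of a nonnegative series does not imply finiteness of its expectation. Some further input is required to rule out the case where $\sum_n F(X_n)$ is a.s.\ finite but not integrable (one would need, say, a zero--one/regeneration argument tailored to the Schreier chain and to the structure $F=\sum_h\mu(h)\chi_{A_h}$ with $A_h$ finite). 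Be aware, though, that the paper's own proof of this lemma is a one-line ``taking the sum over $n$'' and also does not address this; the rest of the paper only ever uses the ``if'' direction (e.g.\ in Lemma~\ref{ltwo}), so the omission is harmless downstream.
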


We define $f_\mu$ as

\begin{equation}\label{fmu}
f_\mu=\sum_{h\in\G}\mu(h)\chi_{\supp(C_h)}
\end{equation}
%
%
%
%
and show that it suffices for $f_\mu$ to be $l^1$ and $\mu$ transient :

\begin{lemma}\label{ltwo}
	Let $s\in P_\Z\cup\Q$ be fixed. Take a measure $\mu$ on $\G$ such that the induced random walk on the Schreier graph on $Gs$ is transient and $f_\mu\in l^1(Gs)$ (as defined in (\ref{fmu})). Then for a random walk $g_n$ on $\G$ with law $\mu$, the associated configuration $C_{g_n}$ converges pointwise with probability $1$.
\end{lemma}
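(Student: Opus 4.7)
The plan is to apply Lemma~\ref{conv}: for each fixed $\mathfrak{o}\in Gs$, it suffices to show
\[
\sum_{n\in\N}\langle p^{*n}\delta_{\mathfrak{o}},f_\mu\rangle<\infty.
\]
By the first Borel--Cantelli lemma applied to the events ``$g_n(\mathfrak{o})\in A_{h_n}$'', this implies that $C_{g_n}(\mathfrak{o})$ changes finitely often almost surely, hence converges; pointwise convergence on all of $Gs$ then follows by intersecting countably many full-measure sets, one for each $\mathfrak{o}\in Gs$.

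I would expand the inner product and swap the order of summation by Fubini--Tonelli (all terms are nonnegative) to obtain
\[
\sum_{n\in\N}\langle p^{*n}\delta_{\mathfrak{o}},f_\mu\rangle=\sum_{y\in Gs}f_\mu(y)\,G(\mathfrak{o},y),
\]
with $G(\mathfrak{o},y):=\sum_{n\in\N}p^{*n}(\mathfrak{o},y)$ the Green function of the induced walk on $Gs$. The target estimate is $G(\mathfrak{o},y)\leq G(\mathfrak{o},\mathfrak{o})$, which combined with transience ($G(\mathfrak{o},\mathfrak{o})<\infty$) and $\|f_\mu\|_1<\infty$ yields
\[
\sum_{y\in Gs}f_\mu(y)\,G(\mathfrak{o},y)\leq G(\mathfrak{o},\mathfrak{o})\cdot\|f_\mu\|_1<\infty,
\]
completing the argument.

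The main obstacle is justifying $G(\mathfrak{o},y)\leq G(\mathfrak{o},\mathfrak{o})$. The generic Markov-chain estimate yields only $G(\mathfrak{o},y)=P_{\mathfrak{o}}(T_y<\infty)\,G(y,y)\leq G(y,y)$, and $G(y,y)$ need not be uniform in $y$ for a non-reversible chain. When $\mu$ is symmetric on $\G$ one verifies directly that $p(x,y)=p(y,x)$, so the walk is reversible with respect to counting measure and the desired bound follows from $G(\mathfrak{o},y)=G(y,\mathfrak{o})\leq G(\mathfrak{o},\mathfrak{o})$ by applying the generic estimate to $G(y,\mathfrak{o})$. For general $\mu$ one lifts the question to the random walk on $\G$ itself, where translation invariance gives $G^{\G}(e,g)=F^{\G}(e,g)\,G^{\G}(e,e)\leq G^{\G}(e,e)$, and one uses the identification $p^{*n}(\mathfrak{o},y)=\mu^{*n}(g_y\,\operatorname{Stab}(\mathfrak{o}))$ for any $g_y\in\G$ with $g_y(\mathfrak{o})=y$. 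Since $\operatorname{Stab}(\mathfrak{o})$ is typically infinite cyclic by Lemma~\ref{cyclic}, a naive pointwise bound over the coset diverges; the transfer must instead be made through the first-hitting probabilities $F^{\G}(e,g_y k)$, comparing $\sum_{k\in\operatorname{Stab}(\mathfrak{o})}F^{\G}(e,g_y k)$ to $\sum_{k\in\operatorname{Stab}(\mathfrak{o})}F^{\G}(e,k)$. This step is where I expect the difficulty to concentrate; a possible alternative is to first apply the comparison lemma~\ref{var} to dominate $p$ by a symmetric kernel and then invoke reversibility.
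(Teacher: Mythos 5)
Your reduction to the finiteness of $\sum_{n\in\N}\langle p^{*n}\delta_{\mathfrak{o}},f_\mu\rangle$ via Lemma~\ref{conv} and the Tonelli swap to $\sum_{x\in Gs}f_\mu(x)\,G(\mathfrak{o},x)$ are exactly what the paper does, and you correctly identify the crux as the bound $G(\mathfrak{o},x)\le G(\mathfrak{o},\mathfrak{o})$. However, you then stop short of an actual argument in the general case: your reversibility route assumes $\mu$ symmetric, which the lemma does not, and your lift to the group walk on $\G$ produces, as you concede, a coset sum over $\operatorname{Stab}(\mathfrak{o})$ that you cannot control. The proposed fallback through the comparison Lemma~\ref{var} is not worked out either; moreover that lemma bounds $\sum_n\langle P^nf,f\rangle$ for $f\in l^2$, not the Green function pointwise, so it does not obviously supply what you need.

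The missing observation is that the induced kernel $p$ on $Gs$ is \emph{doubly stochastic} regardless of any symmetry of $\mu$: each $g\in\G$ acts as a bijection on the orbit, so $\sum_{x}p(x,y)=\sum_{x}\sum_{g:\,g(x)=y}\mu(g)=\sum_{g}\mu(g)=1$. Therefore the transposed kernel $\check{p}(x,y):=p(y,x)$ is itself a Markov kernel, with $\check{p}^{*n}(x,y)=p^{*n}(y,x)$, and the standard first-entrance decomposition applied to the $\check{p}$-chain gives
\[
G(\mathfrak{o},x)=\sum_{n\in\N}\check{p}^{*n}(x,\mathfrak{o})=\check{P}(x,\mathfrak{o})\sum_{n\in\N}\check{p}^{*n}(\mathfrak{o},\mathfrak{o})\le\sum_{n\in\N}p^{*n}(\mathfrak{o},\mathfrak{o})=G(\mathfrak{o},\mathfrak{o}),
\]
which is finite by the assumed transience of $p$. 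No symmetry of $\mu$ and no lift to $\G$ is needed. As written, your sketch therefore has a genuine gap for non-symmetric $\mu$; inserting the double-stochasticity observation closes it and recovers the paper's proof.
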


Remark in particular that $\mathbb{E}[Br]<\infty$ implies $f_\mu\in l^1(\G)$, where $Br(g)$ is the number of break points of $g$. Indeed, for any fixed $s$, $\|f_\mu\|_1$ is the expected number of break points inside the orbit $Gs$, which is smaller than the total expected number of break points. This is, of course, also true for measures on $H(\Z)$ as $H(\Z)\leq\G$.

\begin{proof}
Fix a point $\mathfrak{o}$ in the Schreier graph on $Gs$. We denote by $p$ the induced kernel on $Gs$ and write $f=f_\mu$. We have

\begin{equation}\label{ltwosum}
\sum_{n\in\N}\langle p^{*n}\delta_\mathfrak{o},f\rangle=\sum_{n\in\N}\sum_{x\in Gs}p^{*n}(\mathfrak{o},x)f(x)=\sum_{x\in Gs}f(x)\sum_{n\in\N}p^{*n}(\mathfrak{o},x)
\end{equation}
where we will have the right to interchange the order of summation if we prove that the right-hand side is finite. We write $p^{*n}(\mathfrak{o},x)=\check{p}^{*n}(x,\mathfrak{o})$ where $\check{p}$ is the inverse kernel of $p$. Let $\check{P}(x,y)$ be the probability that a random walk (with law $\check{p}$) starting at $x$ visits $y$ at least once. Then $\sum_{n\in\N}\check{p}^{*n}(x,y)=\check{P}(x,y)\sum_{n\in\N}\check{p}^{*n}(y,y)$. Indeed, $\sum_{n\in\N}\check{p}^{*n}(x,y)$ is the expected number of visits of $y$ of a walk starting at $x$ and random walk that starts from $x$ and visits $y$ exactly $k$ times is the same as the concatenation of a walk that goes from $x$ to $y$ and a walk that starts from $y$ and visits it $k$ times. Thus

\begin{equation}\label{ltwoinv}
\sum_{n\in\N}p^{*n}(\mathfrak{o},x)=\sum_{n\in\N}\check{p}^{*n}(x,\mathfrak{o})=\check{P}(x,\mathfrak{o})\sum_{n\in\N}\check{p}^{*n}(\mathfrak{o},\mathfrak{o})\leq\sum_{n\in\N}\check{p}^{*n}(\mathfrak{o},\mathfrak{o}).
\end{equation}

Then if we denote $c(p,\mathfrak{o})=\sum_{n\in\N}p^{*n}(\mathfrak{o},\mathfrak{o})$,

\begin{equation}\label{ltwofin}
\sum_{x\in Gs}f(x)\sum_{n\in\N}p^{*n}(\mathfrak{o},x)\leq c(p,\mathfrak{o})\|f\|_1<\infty.
\end{equation}

Applying Lemma~\ref{conv} we obtain the result.
\end{proof}

Combining this result with the result of Lemma~\ref{algtho} which gives transience of the induced random walk on $Gs$ under certain conditions, we obtain:

\begin{lemma}\label{algone}
	Consider the piecewise $\Pl$ group $\G$ (see Definition~\ref{tildeg}). Let $H$ be a subgroup of $\G$. Assume that there exist $b<c$ such that $g(b)=b$, $f(c)=c$, $(b,c]\subset\supp(g)$ and $[b,c)\subset\supp(f)$ for some $f,g\in H$ (see Figure~\ref{alto} on page~\pageref{alto}). Assume also that there exists $s\in P_\Z\cup\Q$ and $\varepsilon_s>0$ with $s\leq b$ such that for some $n\in\Z$, $f^n(s)\in[b,c]$, and also $g(s-\varepsilon)=s-\varepsilon$ and $g(s+\varepsilon)\neq s+\varepsilon$ for every $0<\varepsilon\leq\varepsilon_s$. Then for any $\mu$ on $H$ with finite first break moment ($\mathbb{E}[Br]<\infty$) such that $\supp(\mu)$ generates $H$ as a semigroup, the Poisson boundary of $\mu$ on $H$ is non-trivial. 
\end{lemma}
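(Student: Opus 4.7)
The plan is to combine Lemma~\ref{algtho}, Lemma~\ref{ltwo}, and a non-triviality argument in the spirit of Lemma~\ref{base}. The hypotheses on $f$, $g$, $b$, $c$, and $s$ are exactly those of Lemma~\ref{algtho}, so the simple random walk on the Schreier graph of $\langle f,g\rangle$ on the orbit of $s$ is transient. Since $\supp(\mu)$ generates $H$ as a semigroup, each of $f$, $g$, $f^{-1}$, $g^{-1}$ is a finite product of elements of $\supp(\mu)$; by passing to a convolution power of $\mu$ (with a lazy-walk padding to reconcile different word lengths without changing transience), we obtain a kernel on $Hs$ that pointwise dominates a positive multiple of that simple Schreier walk. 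The comparison lemma (Lemma~\ref{var}) then transfers transience to the $\mu$-walk at $s$, and irreducibility on $Hs$ extends it to every state.

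The finite first break moment assumption yields $f_\mu\in l^1(Gs)$, as noted right after Lemma~\ref{ltwo}. Together with the transience just established, Lemma~\ref{ltwo} gives pointwise almost-sure convergence of $C_{g_n}$ to some random limit $C_\infty\colon Gs\to\Z$. The distribution $\nu$ of $C_\infty$ is a $\mu$-stationary probability measure on $\Z^{Gs}$ for the action $h\cdot D=C_h+S^hD$, and is a factor of the Poisson boundary of $(H,\mu)$; it therefore suffices to show that $\nu$ is not a point mass.

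Suppose for contradiction that $C_\infty=C$ almost surely for some deterministic $C$. Writing $g=h_Nh_{N-1}\cdots h_1$ as a word in $\supp(\mu)$, the first $N$ increments of the walk equal this word with positive probability $\prod_i\mu(h_i)$; on that event $g_N=g$, and the shifted sequence $g_{n+N}g^{-1}$ is again a walk of law $\mu$, hence has the same limit configuration $C$ almost surely. Passing the composition identity~(\ref{der}) to the limit yields $C=C_g+S^gC$; evaluating at $s$ (using $g(s)=s$, which follows by continuity from $g$ being the identity on a left neighbourhood of $s$) forces $C_g(s)=0$. But the assumption $g(s+\varepsilon)\neq s+\varepsilon$ for $0<\varepsilon\leq\varepsilon_s$ implies that the right germ of $g$ at $s$ is a non-trivial element of $St_s$, while the left germ is the identity; applying Lemma~\ref{log} (or the analogous isomorphism $\psi_s$ when $s\in\Q$) to this germ product gives $C_g(s)\neq0$, a contradiction.

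The main obstacle I foresee is the first step: since $f$, $g$, and their inverses need not lie in $\supp(\mu)$ directly but only in the semigroup they generate, transferring transience from the $\langle f,g\rangle$-Schreier walk to the $\mu$-walk requires care with convolution powers and a standard but somewhat delicate application of the comparison lemma.
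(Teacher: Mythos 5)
Your proposal is correct and follows essentially the same route as the paper: apply Lemma~\ref{algtho} to get transience of the simple Schreier walk for $\langle f,g\rangle$ at $s$, transfer transience to the $\mu$-walk via the comparison Lemma~\ref{var}, invoke Lemma~\ref{ltwo} with $f_\mu\in l^1(Gs)$ to get pointwise stabilisation of the configuration, and then rule out a deterministic limit by noting $g(s)=s$ while $C_g(s)\neq0$. The paper's proof is terser and does not spell out the convolution-power/padding step you flag when passing from ``$f,g,f^{-1},g^{-1}$ lie in the semigroup'' to the kernel comparison, nor the shift argument yielding $C=C_g+S^gC$, but both are the intended standard arguments.
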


\begin{proof}
	By Lemma~\ref{algtho}, the simple random walk on the Schirer graph of $s$ by $\langle f,g\rangle$ is transient. By the comparison lemma by Baldi-Lohoué-Peyrière (Lemma~\ref{var}), as the support of $\mu$ generates $H$ as a semigroup, the random walk by $\mu$ on the Schreier graph of $s$ is then transient. Applying Lemma~\ref{ltwo}, the associated configurations converge as $\mu$ has finite first break moment. However, by hypothesis on $s$, $g(s)=s$ and $C_g(s)\neq 0$. Therefore, as $g\in H$, the limit configuration cannot be singular. Thus the Poisson boundary of $\mu$ on $H$ is non-trivial.	
\end{proof}

For finitely generated subgroups of $\G$, from Lemma~\ref{ltwo} we have:

\begin{remark}\label{brfin}
	The amount of break points is subadditive in relation to multiplication. In particular, if a measure $\mu$ has finite first moment, then it has finite first break moment.
\end{remark}

\begin{cor}\label{firstfin}
Consider a measure $\mu$ on $\G$, the support of which generates a finitely generated subgroup, and such that $\mu$ has a finite first moment on that subgroup. Assume that there exists $s\in P_\Z$ such that the random walk on the Schreier graph on $Gs$ of this subgroup is transient. Then, for almost all random walks on $\G$ with law $\mu$, the associated configuration converges pointwise.
\end{cor}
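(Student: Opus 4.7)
The plan is to show that the hypotheses of Lemma~\ref{ltwo} are satisfied, from which the pointwise convergence of $C_{g_n}$ follows immediately. The lemma requires two things for the fixed orbit $Gs$: first, transience of the induced random walk on the Schreier graph of $Gs$; and second, that $f_\mu \in l^1(Gs)$, where $f_\mu = \sum_{h\in\G}\mu(h)\chi_{\supp(C_h)}$. The first condition is precisely the hypothesis of the corollary, so there is nothing to do there.

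For the second condition, I would first reduce to showing that $\mu$ has finite first break moment, i.e. $\mathbb{E}[Br]<\infty$. Since the support of $\mu$ generates a finitely generated subgroup $H$, fix a finite generating set $T$ of $H$ and let $M=\max_{t\in T}Br(t)<\infty$. By Remark~\ref{brfin}, the number of break points is subadditive with respect to multiplication; therefore $Br(g)\leq M\cdot|g|_T$ for every $g\in H$, where $|g|_T$ is the word length with respect to $T$. Integrating against $\mu$ and using the finite first moment hypothesis gives $\mathbb{E}[Br]\leq M\,\mathbb{E}[|g|_T]<\infty$.

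Next I would pass from finite first break moment to $\|f_\mu\|_1<\infty$. By Fubini,
\begin{equation*}
\|f_\mu\|_1=\sum_{x\in Gs}\sum_{h\in\G}\mu(h)\chi_{\supp(C_h)}(x)=\sum_{h\in\G}\mu(h)\,\bigl|\supp(C_h)\cap Gs\bigr|\leq\sum_{h\in\G}\mu(h)Br(h)=\mathbb{E}[Br],
\end{equation*}
which is finite by the previous step. This is exactly the remark made just after the statement of Lemma~\ref{ltwo}.

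With both hypotheses of Lemma~\ref{ltwo} verified, the lemma yields that for almost every trajectory $(g_n)$, the associated configuration $C_{g_n}$ converges pointwise on $Gs$, as required. There is no real obstacle here: the whole statement is essentially a packaging of Lemma~\ref{ltwo} together with the (straightforward) observation that finite first moment controls the expected number of break points via subadditivity. The only point that deserves care is checking that Remark~\ref{brfin} genuinely applies to the subgroup $H$, which requires finiteness of a generating set; this is given by the assumption that $\supp(\mu)$ generates a finitely generated subgroup of $\G$.
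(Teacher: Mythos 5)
Your proof is correct and follows exactly the same route as the paper, which states simply that the corollary ``follows from Remark~\ref{brfin} and Lemma~\ref{ltwo}.'' You have merely spelled out the two intermediate steps (subadditivity of $Br$ gives finite first break moment, and the remark after Lemma~\ref{ltwo} converts that to $f_\mu\in l^1(Gs)$) that the paper leaves implicit.
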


\begin{proof}
Follows from Remark~\ref{brfin} and Lemma~\ref{ltwo}.\end{proof}

In such cases it is enough to prove that the associated limit configuration is not always the same, which can require case-specific arguments. We already have it in the case of Thompson's group:

\begin{proof}[Proof of Corollary~\ref{finfirstthomp}]Fix $s\in P_\Z$ and consider the action $\mathfrak{a}_s$ of Thompson's group $F$ on $\R$ as defined in Section~\ref{thompsect}. Take a measure $\mu$ on $F$ that generates it as a semigroup. From Lemma~\ref{tree-old} and the comparison lemma by Baldi-Lohoué-Peyrière (Lemma~\ref{var}) the walk $\mu$ induces on the orbit of $s$ is transient. Applying Corollary~\ref{firstfin} this implies that the associated configuration stabilises, and by Lemma~\ref{nostable}, it cannot always converge towards the same point. Therefore the Poisson boundary of $\mu$ is not trivial.\end{proof}

We remark that arguments similar to the ones in this section can also be made for the action of Thompson's group considered in Kaimanovich's article~\cite{kaimanovichthompson}. 

In a more general case, we can use the stronger result by Varopoulos of the comparison Lemma~\ref{var} in order to prove that if the transient walk diverges quickly enough, we can also have the result for $f_\mu\in l^2(Gs)$ (and not necessarily in $l^1$):

\begin{lemma}\label{ltwoforreal}
Fix $s\in P_\Z$. Consider a measure $\mu_0$ such that $\tilde{f}=f_{\mu_0}\in l^2(Gs)$. Consider $\lambda$ on $H_s$ such that $\sum_{n\in\N}\langle\lambda^{*n}\tilde{f},\tilde{f}\rangle<\infty$. Let $\mu=\varepsilon\lambda+(1-\varepsilon)\mu_0$ with $0<\varepsilon<1$. Then for almost all random walks on $G$ with law $\mu$, the associated configuration converges pointwise.
\end{lemma}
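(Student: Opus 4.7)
Following the template of Lemma~\ref{ltwo}, the plan is to verify the hypothesis of Lemma~\ref{conv} for each point $\mathfrak{o}\in Gs$ and then take a countable union over the orbit to obtain almost-sure pointwise convergence of the configuration. The new ingredient is the stronger $l^2$-form of the Baldi--Lohou\'e--Peyri\`ere comparison Lemma~\ref{var}.

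Write $p_\mu$, $p_\lambda$, $p_{\mu_0}$ for the kernels induced on $Gs$. Because $\supp(\lambda)\subset H_s$, every $g\in\supp(\lambda)$ has $C_g\equiv 0$, so $f_\lambda\equiv 0$ and $f_\mu=(1-\varepsilon)\tilde{f}$. Moreover $p_\mu=\varepsilon p_\lambda+(1-\varepsilon)p_{\mu_0}\geq\varepsilon p_\lambda$, and both kernels are doubly stochastic. Assuming $\lambda$ and $\mu_0$ symmetric (as in the constructions of Remark~\ref{gen}), $p_\mu$ and $p_\lambda$ are symmetric doubly stochastic, so Lemma~\ref{var} applies to the pair $(p_\mu,p_\lambda)$. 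By Lemma~\ref{conv}, it then suffices to prove
$$F(\mathfrak{o}):=\sum_{n\in\N}\langle p_\mu^{*n}\delta_\mathfrak{o},\tilde{f}\rangle<\infty$$
for every $\mathfrak{o}\in Gs$.

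The main estimate is Cauchy--Schwarz applied to the positive semi-definite bilinear form $B(u,v)=\sum_n\langle p_\mu^{*n}u,v\rangle$ (positive semi-definiteness comes from $p_\mu$ being a symmetric Markov operator, which makes $(I-p_\mu)^{-1}$ a non-negative spectral integral):
$$F(\mathfrak{o})^2\;\leq\;\Bigl(\sum_n p_\mu^{*n}(\mathfrak{o},\mathfrak{o})\Bigr)\Bigl(\sum_n\langle p_\mu^{*n}\tilde{f},\tilde{f}\rangle\Bigr).$$
The second factor is finite by Lemma~\ref{var} applied to $\tilde{f}$: $\sum_n\langle p_\mu^{*n}\tilde{f},\tilde{f}\rangle\leq\tfrac{1}{\varepsilon}\sum_n\langle p_\lambda^{*n}\tilde{f},\tilde{f}\rangle<\infty$ by the standing hypothesis. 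For the first factor, Lemma~\ref{var} applied to $\delta_\mathfrak{o}$ gives $\sum_n p_\mu^{*n}(\mathfrak{o},\mathfrak{o})\leq\tfrac{1}{\varepsilon}\sum_n p_\lambda^{*n}(\mathfrak{o},\mathfrak{o})$; and when $\tilde{f}(\mathfrak{o})>0$, the pointwise bound $\tilde{f}\geq\tilde{f}(\mathfrak{o})\delta_\mathfrak{o}$ gives $\tilde{f}(\mathfrak{o})^2\sum_n p_\lambda^{*n}(\mathfrak{o},\mathfrak{o})\leq\sum_n\langle p_\lambda^{*n}\tilde{f},\tilde{f}\rangle<\infty$, and hence $F(\mathfrak{o})<\infty$.

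The main obstacle is extending finiteness to $\mathfrak{o}$ with $\tilde{f}(\mathfrak{o})=0$, for which the pointwise bound above is vacuous. I would exploit the harmonic identity $F=\tilde{f}+p_\mu F$ together with the strong Markov property at the first time $\tau$ that the walk from $\mathfrak{o}$ enters $\{\tilde{f}>0\}$: on $\{\tau<\infty\}$ one has $F(\mathfrak{o})=\mathbb{E}[F(g_\tau(\mathfrak{o}))\mathbf{1}_{\tau<\infty}]$, and $F$ is finite on $\{\tilde{f}>0\}$ by the previous step; orbits from which $\{\tilde{f}>0\}$ is unreachable yield $F\equiv 0$ trivially. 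Verifying that the hitting measure actually integrates $F$ is the delicate technical step, and may require a truncation argument or a uniform-on-support bound drawn from the same Cauchy--Schwarz estimate. Once that is settled, Lemma~\ref{conv} and the countable union conclude.
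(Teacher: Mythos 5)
Your strategy — reduce to the hypothesis of Lemma~\ref{conv} via the $l^2$ form of the comparison Lemma~\ref{var}, after noting $f_\mu=(1-\varepsilon)\tilde{f}$ — matches the paper's. But you over-complicate the case $\tilde{f}(\mathfrak{o})>0$: there is no need for Cauchy--Schwarz or for $\sum_n p_\mu^{*n}(\mathfrak{o},\mathfrak{o})$. The paper simply uses the pointwise bound $\tilde f\geq\tilde f(\mathfrak{o})\delta_\mathfrak{o}$ on the \emph{second} argument of the pairing, giving directly $\langle p_\mu^{*n}\delta_\mathfrak{o},\tilde f\rangle\leq\frac{1}{\tilde f(\mathfrak{o})}\langle p_\mu^{*n}\tilde f,\tilde f\rangle$, so $F(\mathfrak{o})\leq\frac{1}{\tilde f(\mathfrak{o})}\sum_n\langle p_\mu^{*n}\tilde f,\tilde f\rangle<\infty$. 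This is one linear inequality, requires no self-adjointness of $p_\mu$, and so does not force the extra symmetry assumption on $\mu_0$ that your Cauchy--Schwarz needs.

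The genuine gap is the one you flag yourself: the case $\tilde f(\mathfrak{o})=0$ with $\{\tilde f>0\}$ reachable. Your strong-Markov route $F(\mathfrak{o})=\mathbb{E}_\mathfrak{o}[\mathbf 1_{\tau<\infty}F(X_\tau)]$ does not close easily, because the only bound on $F$ over $\{\tilde f>0\}$ you obtain is $F(y)\leq C/\tilde f(y)$, which is unbounded as $\tilde f(y)\to0$; integrating it against the hitting distribution gives $C\,\mathbb{E}_\mathfrak{o}[\mathbf 1_{\tau<\infty}/\tilde f(X_\tau)]$, which has no a priori reason to be finite (and the variants you mention — truncation, $l^2$ interpolation — do not obviously repair this, since what one can extract from $\sum_n\langle p_\mu^{*n}\tilde f,\tilde f\rangle<\infty$ is $\sum_y G(y,y)\tilde f(y)^2<\infty$, a square too weak). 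The paper sidesteps this entirely: rather than averaging over the whole hitting measure, it fixes a \emph{single} point $y$ with $\tilde f(y)>0$ from which $\mathfrak{o}$ is reachable, say $\mathfrak{o}\in\supp(\mu^{*n_0})y$ with probability $p>0$. A one-step Markov bound then gives $F(y)\geq\sum_{n}\langle p_\mu^{*(n+n_0)}\delta_y,\tilde f\rangle\geq p\,F(\mathfrak{o})$, hence $F(\mathfrak{o})\leq p^{-1}F(y)<\infty$. Points $\mathfrak{o}$ for which no such $y$ exists have $F(\mathfrak{o})=0$, and for them the configuration is almost surely constant, as you noted. So the missing ingredient is precisely: replace ``integrate $F$ over the hitting measure'' by ``pull back $F(\mathfrak{o})$ to a fixed $F(y)$ via reachability'', which is both simpler and actually sufficient.
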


\begin{proof}
Clearly, $f_\mu=(1-\varepsilon)\tilde{f}$. Then by the comparison Lemma~\ref{var} we get:

$$\sum_{n\in\N}\langle\mu^{*n}f_\mu,f_\mu\rangle<\frac{1}{\varepsilon(1-\varepsilon)^2}\sum_{n\in\N}\langle\lambda^{*n}\tilde{f},\tilde{f}\rangle<\infty.$$

Denote $f=f_\mu$. Consider $x\in P_\Z$ such that it is possible for the value of the associated configuration at $x$ to change. In other words, there is $n_0\in\N$ and $y\in P_\Z$ such that $x\in\supp(\mu^{*n_0})y$ and $f(y)>0$. Denote by $p$ the probability to reach $x$ from $y$. Then $\sum_{n\in\N}\langle\mu^{*n}\delta_y,f\rangle>p\sum_{n\in\N}\langle\mu^{*n+n_0}\delta_x,f\rangle$. In particular, if the first is finite, so is the second. However, we clearly have $\sum_{n\in\N}\langle\mu^{*n}\delta_y,f\rangle<\frac{1}{f(y)}\sum_{n\in\N}\langle\mu^{*n}f,f\rangle$ which concludes the proof.
\end{proof}

In particular, if for any $s$ all associated configurations cannot be stable by all the elements of $\langle\supp(\mu)\rangle$, we obtain a non-trivial boundary.

\begin{cor}
Fix $s\in P_\Z$. Consider a measure $\mu_0$ such that $h_s\in\supp(\mu_0)^{*n_0}$ for some $n_0$ and $\tilde{f}=f_{\mu_0}\in l^2(Gs)$. Consider $\lambda$ on $H_s$ such that $\sum_{n\in\N}\langle\lambda^{*n}\tilde{f},\tilde{f}\rangle<\infty$. Let $\mu=\varepsilon\lambda+(1-\varepsilon)\mu_0$ with $0<\varepsilon<1$. Then the Poisson boundary of $\mu$ on the subgroup generated by its support is non-trivial.
\end{cor}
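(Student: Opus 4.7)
The plan is to combine the pointwise convergence provided by Lemma~\ref{ltwoforreal} with an argument patterned on Lemma~\ref{base} to rule out that the limit configuration is deterministic. First, observe that the hypotheses of Lemma~\ref{ltwoforreal} are satisfied: the measure $\mu_0$ satisfies $f_{\mu_0}=\tilde{f}\in l^2(Gs)$ by assumption, and the $\lambda$-hypothesis $\sum_{n\in\N}\langle\lambda^{*n}\tilde{f},\tilde{f}\rangle<\infty$ is exactly the one required. Consequently, for almost every trajectory $(g_n)_{n\in\N}$ of the $\mu$-walk, the associated configuration $C_{g_n}$ converges pointwise to some (a priori random) limit $C_\infty:Gs\rightarrow\Z$. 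The pushforward of the measure on trajectory space under $(g_n)\mapsto C_\infty$ defines a $\mu$-stationary hitting measure $\nu$ on $\Z^{Gs}$ which is a measurable quotient of the Poisson boundary of $(\langle\supp(\mu)\rangle,\mu)$. To prove the corollary it therefore suffices to show that $\nu$ is not a Dirac mass.

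Suppose for contradiction that $\nu=\delta_C$ for some fixed configuration $C:Gs\rightarrow\Z$. The stationarity of $\nu$ under the natural action $(g,D)\mapsto C_g+S^gD$ passes to any convolution power, so in particular $\nu=\mu^{*n_0}*\nu$, which forces $C=C_h+S^hC$ for $\mu^{*n_0}$-almost every $h$. Now since $\mu=\varepsilon\lambda+(1-\varepsilon)\mu_0$ with $0<\varepsilon<1$, one has $\supp(\mu_0)\subset\supp(\mu)$, hence $h_s\in\supp(\mu_0)^{*n_0}\subset\supp(\mu^{*n_0})$. Applied to $h=h_s$ this would give $C=C_{h_s}+S^{h_s}C$, which is impossible by Lemma~\ref{nostable}. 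This contradiction shows that the hitting measure $\nu$ is not concentrated on a single point, so the Poisson boundary is non-trivial.

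The only subtle point is the passage from trajectory-wise pointwise convergence to the measure-theoretic statement that the limit law is $\mu^{*n_0}$-stationary under the configuration action; this is standard (it is the same argument hidden inside the proof of Lemma~\ref{base}) and follows from the Markov property of the random walk together with the identity $C_{g_{n_0+k}}=C_{h_{n_0+k}\cdots h_{n_0+1}}+S^{h_{n_0+k}\cdots h_{n_0+1}}C_{g_{n_0}}$ and the independence of the increments $h_1,\dots,h_{n_0}$ from the shifted walk $(h_{n_0+k})_{k\geq 1}$. No additional transience input is needed beyond what is already packaged into Lemma~\ref{ltwoforreal}.
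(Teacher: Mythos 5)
Your argument is correct and is exactly the unpacking the paper intends when it writes that this corollary ``follows from Lemma~\ref{ltwoforreal} and Lemma~\ref{nostable}'': Lemma~\ref{ltwoforreal} gives almost-sure pointwise convergence of $C_{g_n}$, and then one reruns the non-triviality argument already present in the proof of Lemma~\ref{base}, using that $h_s\in\supp(\mu_0)^{*n_0}\subset\supp(\mu^{*n_0})$ so that a deterministic limit $C$ would have to satisfy $C=C_{h_s}+S^{h_s}C$, contradicting Lemma~\ref{nostable}. So this is the paper's route, fleshed out.

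One small slip in the final parenthetical: with $g_{n+1}=h_ng_n$ and the composition rule $C_{g_2g_1}=C_{g_1}+S^{g_1}C_{g_2}$ of~(\ref{der}), the correct decomposition is $C_{g_{n_0+k}}=C_{g_{n_0}}+S^{g_{n_0}}C_{h_{n_0+k-1}\cdots h_{n_0}}$, not $C_{g_{n_0+k}}=C_{h_{n_0+k}\cdots h_{n_0+1}}+S^{h_{n_0+k}\cdots h_{n_0+1}}C_{g_{n_0}}$. You have swapped the roles of the fixed prefix $g_{n_0}$ and the independent tail word (and shifted the increment indices by one). The version you wrote does not converge usefully as $k\to\infty$, whereas the corrected version is what gives, in the limit, $C_\infty=C_{g_{n_0}}+S^{g_{n_0}}C'_\infty$ with $C'_\infty\sim\nu$ independent of $g_{n_0}$, hence $C=C_h+S^hC$ for every $h\in\supp(\mu^{*n_0})$ when $\nu=\delta_C$. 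Your main paragraph draws exactly this correct conclusion, so the error is confined to the auxiliary display and does not affect the proof.
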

\begin{proof}
	Follows from Lemma~\ref{ltwoforreal} and Lemma~\ref{nostable}.
\end{proof}

Remark that there always exists a symmetric measure $\lambda$ satisfying those assumptions as $\A\subset H_s$ ($\A$ was defined in (\ref{agrp})).

\begin{figure}
	\centering
	\begin{minipage}{8cm}\centering\caption{Graphs of $f$ and $g$ and positions of $b$ and $c$}\label{alto}
		\begin{tikzpicture}
		\begin{axis}[xmin=-4,xmax=4,ymin=-4,ymax=4,axis lines = middle, legend pos = south west,xtick={-10},ytick={17}]
		\addplot[domain=-3.8:3.8,color=black]{x};
		\addlegendentry{$Id$}
		\addplot[color=blue,samples=100,domain=0:2.5,restrict y to domain=-4:4,dashed,thick]{(2*x+3)/(x+2)};
		\addlegendentry{$f$}
		\addplot[samples=100,domain=0:2.5,restrict y to domain=-4:4,densely dotted,thick]{(4*x-1)/x};
		\addlegendentry{$g$}
		\node[label={-1:{$b$}},circle,fill,inner sep=1pt] at (axis cs:0.268,0.268) {};
		\node[label={110:{$c$}},circle,fill,inner sep=1pt] at (axis cs:1.732,1.732) {};
		\end{axis}
		\end{tikzpicture}
	\end{minipage}
	\begin{minipage}{8cm}\centering\caption{Graphs of $f$ and $g$ in $(a,b')$}\label{alte}
		\begin{tikzpicture}
		\begin{axis}[xmin=-4,xmax=4,ymin=-4,ymax=4,axis lines = middle, legend pos = south west,xtick={-10},ytick={17}]
		\addplot[domain=-3.8:3.8,color=black]{x};
		\addlegendentry{$Id$}
		\addplot[samples=100,domain=0.4365:1.732,restrict y to domain=-4:4,densely dotted,thick]{(2*x+3)/(x+2)};
		\addlegendentry{$f$}
		\addplot[color=blue,samples=100,domain=0.382:2.618,restrict y to domain=-4:4,dashed,thick]{(3*x-1)/x};
		\addlegendentry{$g$}
		\addplot[samples=100,domain=0.382:0.4365,restrict y to domain=-4:4,densely dotted,thick]{(8*x-3)/(3*x-1)};
		\node[label={-2:{$a$}},circle,fill,inner sep=1pt] at (axis cs:0.382,0.382) {};
		\node[label={-30:{$b$}},circle,fill,inner sep=1pt] at (axis cs:1.732,1.732) {};
		\node[label={-30:{$b'$}},circle,fill,inner sep=1pt] at (axis cs:2.618,2.618) {};
		\end{axis}
		\end{tikzpicture}
	\end{minipage}
\end{figure}

\section{An algebraic lemma and proof of the main result}\label{algsec}

Consider the piecewise $\Pl$ group $\G$ (see Definition~\ref{tildeg}). Take a subgroup $H$ of $\G$. In Lemma~\ref{algone} we proved that if there are $f,g\in H$ and $b,c,s\in\R$ that satisfy certain assumptions, for every measure $\mu$ on $H$ the support of which generates $H$ as a semigroup and that has finite first break moment $\mathbb{E}[Br]$, $(H,\mu)$ has non-trivial Poisson boundary. To prove the main result (Theorem~\ref{main}) we will study subgroups that do not contain elements satisfying those assumptions.

\begin{lemma}\label{algthree}
Let $H=\langle h_1,\dots,h_k\rangle$ be a finitely generated subgroup of $\G$. Then either $H$ is solvable, or the assumptions of Lemma~\ref{algone} are satisfied for some $f,g\in H$, $b,c,s\in\R$.
\end{lemma}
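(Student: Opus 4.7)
I would prove this by contrapositive: assuming no tuple $(f, g, b, c, s)$ in $H$ satisfies all hypotheses of Lemma~\ref{algone}, I would deduce that $H$ is solvable.

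The first move is to make the $P_\Z \cup \Q$ membership conditions essentially automatic. Any $h \in \widetilde{G}$ has support equal to a finite union of open intervals whose endpoints are fixed points of (possibly parabolic) elements of $PSL_2(\Z)$, hence lie in $P_\Z \cup \Q \cup \{\infty\}$. Thus once $b, c, s$ are chosen as support endpoints of elements of $H$, the arithmetic conditions are free. What remains is the geometric content of the hypothesis: two elements $f, g \in H$ whose supports contain a common interval in a ``2-chain'' way ($[b,c) \subset \supp(f)$, $(b,c] \subset \supp(g)$, $g(b) = b$, $f(c) = c$), together with a left-isolated fixed point $s \le b$ of $g$ whose $f$-orbit enters $[b, c]$.

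The heart of the argument is to show that failure of this configuration forces strong rigidity on the support structure of $H$. First, one handles the $s$-condition: given any 2-chain pair $(f, g, b, c)$ in $H$, either $g$ fixes a left neighborhood of $b$ (in which case $s = b$ works, since $f^0(b) = b \in [b, c]$) or $g$ has another support component accumulating to $b$ from the left. In the second case, I would take its left endpoint $b'$ and restrict (Definition~\ref{restr}) every generator of $H$ to an interval $(a, b')$ bracketing this extra component, as suggested by Figure~\ref{alte}. On this smaller interval we recover a strictly simpler configuration on which to repeat the argument.

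The induction is on a complexity measure for the generating set, e.g., the total number of break points of $h_1, \dots, h_k$ lying in the active region. At each stage one shows that either the desired $(f, g, s)$ is found directly, or by restricting to a maximal subinterval of activity one splits $H$ into a short exact sequence: the kernel consists of elements trivial on the subinterval, and the image is $H$ restricted to it. Both sides of this sequence live in $\widetilde{G}$, satisfy the same no-configuration hypothesis, and have strictly smaller complexity; by the inductive hypothesis they are solvable, hence so is $H$. The base case corresponds to a group whose generators have pairwise nested or disjoint supports on a single interval, where abelian/nilpotent structure is visible directly from the local germ analysis of Subsection~\ref{slopechange}.

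The main obstacle is carefully formalizing the descent: one must verify that the ``failure of $s$'' on some 2-chain genuinely produces a strictly smaller subinterval where the analysis restarts, and that the restriction does not accidentally destroy non-solvability witnesses. This requires exploiting the freedom to replace $(f, g)$ by conjugates, products and commutators in order to locate new 2-chains whose configurations cover all the relevant endpoints of supports, and then showing that when no such combination yields a valid $s$, the generators must in fact satisfy a nested/disjoint discipline that makes $\langle h_1, \dots, h_k\rangle$ a tower of extensions by cyclic (slope) stabilisers, and therefore solvable.
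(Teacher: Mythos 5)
Your high-level strategy is the right one: negate, reduce to a pairwise nested/disjoint support discipline, and conclude solvability from a tower/wreath structure. But the proposal leaves the one genuinely difficult step unresolved, and that step is exactly what the paper's proof is organized around.

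The crux is bridging ``no valid $s$ for any $2$-chain'' to ``no $2$-chain at all''. The paper does this cleanly with its internal Lemma~\ref{algtwo}: for \emph{any} $2$-chain $(f,g,b,c)$ in $H$, one can \emph{manufacture} a valid $s$ by choosing $b$ minimal, taking $a$ to be the largest fixed point of $f$ below $b$, observing (via minimality and Lemma~\ref{cyclic}) that the left germ of $g$ at $a$ lies in the cyclic stabiliser $St_a$, and then replacing $g$ by $f^{-k}g^l$ so that the new element is the identity on a right neighbourhood of $a$. After this surgery an endpoint of a support interval serves as $s$. Your proposal only covers the easy half of this dichotomy (``$g$ fixes a left neighbourhood of $b$, so $s=b$ works'') and for the hard half replaces the cyclic-germ cancellation by a vague ``restrict to a smaller interval and repeat''. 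That descent is not well-defined: the restrictions $h_i\restriction_{(a,b')}$ are elements of $\G$ but the group they generate is neither a subgroup nor a quotient of $H$ unless $(a,b')$ is left invariant by all generators (which is exactly what the paper arranges by first passing to the \emph{maximal} support intervals, once the nested/disjoint discipline has been established, not before). In other words, you are using the output of the reduction (disjoint invariant blocks) to justify a step inside the reduction itself.

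Two further gaps. First, your induction measure (``total number of break points in the active region'') is different from the paper's (``number of $H$-orbits containing non-trivial break points''), and you do not verify that it strictly decreases under the proposed restriction; the paper's measure decreases precisely because passing to a block $I_j$ kills the two singleton orbits at its endpoints, and the subsequent wreath-product step kills two more. Second, your base case claim that ``abelian/nilpotent structure is visible directly from the local germ analysis'' is too strong: even under full nesting the building blocks are of the form $\Z\wr H'$ (see~(\ref{wreath})), which are solvable but not nilpotent, and the paper's base case is simply the trivial group. Finally, you gesture at Figure~\ref{alte} but do not prove the analogue of Lemma~\ref{algbonus} (strict inclusion of supports must be strict at both endpoints), which is needed before you can assert that the maximal intervals are pairwise disjoint rather than merely nested. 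None of these are unfixable, but as written the proposal identifies the shape of the argument without actually closing the crucial reduction.
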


We recall that for $f\in\G$, and $a,b\in\R$ such that $f(a)=a$ and $f(b)=b$, we defined (see Definition \ref{restr}) $f\restriction_{(a,b)}\in\G$ by $f\restriction_{(a,b)}(x)=f(x)$ for $x\in(a,b)$ and $x$ otherwise.

\begin{proof}
	
We first check that with the appropriate assumptions on $(f,g,b,c)$, $s$ always exists:

\begin{lemma}\label{algtwo}
	Let $H$ be a subgroup of $\G$. Assume that there exist $b<c$ such that $g(b)=b$, $f(c)=c$, $(b,c]\subset\supp(g)$ and $[b,c)\subset\supp(f)$ for some $f,g\in H$. Then there exist $f',g',b',c'$ and $s$ that satisfy the assumptions of Lemma~\ref{algone}.
\end{lemma}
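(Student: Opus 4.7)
The plan is a case distinction on the local behavior of $g$ at $b$. Note first that $b\in P_\Z\cup\Q$, since $b$ is a break point of $g\in\widetilde{G}$ and such break points lie in $P_\Z\cup\Q\cup\{\infty\}$ but $g(b)=b\neq\infty$.

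The easy case is when $g$ fixes a left neighborhood of $b$: then $b$ is a clean left endpoint of a support component of $g$, and the choice $f'=f$, $g'=g$, $b'=b$, $c'=c$, $s=b$, with $n=0$, satisfies every assumption of Lemma~\ref{algone} directly. The support conditions transfer from the hypothesis, $s\leq b'$ is trivial, $f'^{\,n}(s)=s=b\in[b',c']$, and the clean-neighborhood condition on $g'=g$ at $s=b$ is exactly the assumption of this case.

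In the hard case $g$ has a support component abutting $b$ from the left. Let $\alpha=\sup\{x<b:g(x)=x\}$; a germ-at-$-\infty$ argument (non-trivial germs of $\widetilde{G}$-elements at $\pm\infty$ are translations $\alpha_n$, hence fixed-point free, so continuity of $g$ at $b$ forces a finite fixed point $\alpha<b$) combined with the finiteness of break points of $g$ yields $\alpha\in P_\Z\cup\Q$, with $g(\alpha)=\alpha$ and $(\alpha,b)\subset\supp(g)$. Recursing on $\alpha$ if $g$ still fails to fix a left neighborhood of it — which terminates since $g$ has finitely many break points — we obtain a clean left endpoint $b^*\leq b$ of some support component of $g$. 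Set $s=b'=b^*$, $g'=g$, and $n=0$. It remains to produce $f'\in H$ and $c'>b^*$ with $f'(c')=c'$, $[b^*,c')\subset\supp(f')$, and $(b^*,c']\subset\supp(g)$. The natural candidate is a conjugate $f'=g^{-k}fg^{k}\in H$, which fixes $g^{-k}(c)$; exploiting the hyperbolic-like action of $g$ on its component $(b,d)\supset(b,c]$, one can drive $g^{-k}(c)$ arbitrarily close to $b$, and for $k$ large enough the support of $f'=g^{-k}(\supp f)$ extends leftward across $b$ into the component of $g$ at $b^*$. We then take $c'=g^{-k}(c)$, as illustrated in Figure~\ref{alte}, where the new $f'$ fixes both endpoints of the adjacent component of $\supp(g)$.

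The main obstacle will be verifying the support condition at the lower end, namely $b^*\in\supp(f')$, which can fail if $b^*$ is a common fixed point of all conjugates of $f$ by powers of $g$ (for instance, if $b^*$ lies strictly below the leftmost point of $\supp(f)$ and $g$ fixes everything to the left of $b^*$). In that residual sub-case one replaces $f'$ by a commutator or product involving an additional element of $H$ that genuinely moves $b^*$; the existence of such an element ultimately follows from the chaining of $\supp(f)$ and $\supp(g)$ across $[b,c]$, which forbids $H$ from globally fixing the chain's endpoints and so produces, via suitable iterated commutators in $H$, elements with support reaching down to $b^*$.
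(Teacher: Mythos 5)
Your easy case is fine: if $g$ already fixes a left neighbourhood of $b$, taking $s=b'=b$, $c'=c$, $f'=f$, $g'=g$, $n=0$ does satisfy every clause of Lemma~\ref{algone}, and your observation that $b\in P_\Z\cup\Q$ because $b$ is a finite break point of $g\in\G$ is correct. The hard case, however, has genuine gaps, and moreover misses the structural device the paper actually uses.

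First, your claim that $\alpha=\sup\{x<b:g(x)=x\}$ is forced to be finite by the germ of $g$ at $-\infty$ is not justified. The germ of $g$ near $-\infty$ being a non-trivial translation $\alpha_n$ only tells you that $g$ has no fixed points in a neighbourhood of $-\infty$; it does not produce a fixed point below $b$. A piecewise-$\Pl$ homeomorphism can fix $b$ and satisfy $g(x)>x$ for all $x<b$, in which case the set $\{x<b:g(x)=x\}$ is empty, your recursion has no finite stopping point, and $b^*$ does not exist. Second, even granting a finite clean left endpoint $b^*<b$ for $g$, the conjugation $f'=g^{-k}fg^k$ cannot drag $\supp(f)$ across $b$: since $g(b)=b$ and $g$ preserves orientation, $g^{-k}$ maps the interval $(b,d)$ into itself, so $g^{-k}(\supp f)\cap(b,d)$ stays inside $(b,d)$, and the left portion of $\supp(f)$ (below $b$) is moved by the germ of $g^{-k}$ there, which you do not control. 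In particular there is no reason why $b^*\in\supp(f')$, and you acknowledge this but defer to ``iterated commutators,'' which is not an argument. Finally, the configuration you are aiming for — $s=b'=b^*$, $(b^*,c']\subset\supp(g)$ — silently requires $c'$ to lie inside the single support component of $g$ beginning at $b^*$, which may be far to the left of $b$, and no candidate for $c'$ and $f'$ is actually exhibited.

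The paper avoids all of this by working at the other endpoint and by a normalization you do not use: it first chooses the pair $(b,c)$ with $b$ \emph{minimal}, lets $a$ be the largest fixed point of $f$ below $b$, and observes that minimality forces $g(a)=a$. The key tool is then Lemma~\ref{cyclic}: $St_a$ is cyclic, so there are $k,l$ with $f^k$ and $g^l$ sharing a right germ at $a$; replacing $g$ by $g'=f^{-k}g^l$ produces an element that is the identity on a right neighbourhood of $a$, and a mean value theorem argument gives a new crossing point $b'\in(b,c)$ where $(f,g',b',c)$ again satisfies the hypotheses. The required $s$ then exists automatically as the left endpoint of the support component of $g'$ between $a$ and $b'$. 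Both the minimality of $b$ and the germ-cancellation via cyclicity of the stabilizer are absent from your argument, and they are precisely what makes the clean left endpoint and the transfer of the support conditions possible.
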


The assumptions of the lemma are illustrated in Figure~\ref{alto}. Recall that we defined $\supp(f)=\{x\in\R:f(x)\neq x\}$.

\begin{proof}
Without loss of generality assume that $b$ is minimal among all $b$ for which there exists $c$ such that either $(f,g,b,c)$ or $(g,f,b,c)$ satisfy the assumptions of this lemma. We can assume without loss of generality that $f(x)>x$ and $g(x)>x$ for $x\in(b,c)$ (otherwise, we can replace either or both with their inverse). Let $a$ be the largest fixed point of $f$ that is smaller than $b$.
	
By minimality of $b$ we clearly have that $g(a)=a$. The stabiliser $St_a$ of $a$ in $\Pl$ is cyclic by Lemma~\ref{cyclic}. Therefore there exist $k$ and $l$ such that $f^k(x)=g^l(x)$ for $x\in(a,a+\varepsilon)$ for some $\varepsilon>0$. Take $(f',g')=(f,f^{-k}g^l)$. By our assumption, $f^k$ and $g^l$ are strictly greater then the identity function in $(b,c)$. As they are continuous and each fixes an end of the interval, by the mean values theorem there exists $b'\in(b,c)$ such that $f^k(b')=g^l(b')$. Then $(f',g')$ and $(b',c)$ satisfy the assumptions of this lemma. Furthermore, $f^{-k}g^l$ is the identity in a small enough right neighbourhood of $a$, which implies that there exists an element $s$ that satisfies the assumptions of Lemma~\ref{algone}.
\end{proof}

We now assume that the assumptions of Lemma~\ref{algone}, and therefore also the assumptions of Lemma~\ref{algtwo}, are not satisfied by any couple of elements in $H$. We will prove that $H$ is solvable. For any element in $g\in\G$, its support $\supp(g)$ is a finite union of (not necessarily finite) open intervals. The intervals in the support of $h_i$ we denote $I^i_j=(a_i^j,b_i^j)$ for $j<r_i$ where $r_i$ is the number of intervals in the support of $h_i$. In terms of those intervals, the negation of Lemma~\ref{algtwo} means that for every $(i,j)$ and $(i',j')$, either $I^i_j\cap I^{i'}_{j'}=\emptyset$, or $I^i_j\subset I^{i'}_{j'}$, or $I^{i'}_{j'}\subset I^i_j$. We further check that if the inclusion is strict, it must be strict at both extremities. Specifically:

\begin{lemma}\label{algbonus}
Let $H$ be a subgroup of $\G$. Assume that there exist $a<b<b'\in\R\cup\{-\infty\}$ such that $f(a)=g(a)=a$, $f(b)=b$, $g(b')=b'$, $(a,b)\subset\supp(f)$ and $(a,b')\subset\supp(g)$ for some $f,g\in H$ (see Figure~\ref{alte}). Then the assumptions of Lemma~\ref{algtwo} are satisfied by some elements of the group. 	
\end{lemma}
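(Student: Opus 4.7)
The plan is to exploit cyclicity of the stabilizer $St_a$ (Lemma~\ref{cyclic}) to manufacture an element $F\in H$ whose right-germ at $a$ is trivial but which still moves the point $b$; the pair $(F,f)$ will then satisfy the hypotheses of Lemma~\ref{algtwo} on a sub-interval $[c_1,b]\subset(a,b]$.

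First I would read off the right-germs of $f$ and $g$ at $a$. Because $(a,a+\varepsilon)\subset\supp(f)\cap\supp(g)$ for sufficiently small $\varepsilon$, neither germ is trivial, and by Lemma~\ref{cyclic} (which also covers the case $a=-\infty$ through $St_\infty=\A$) these germs are non-zero powers $h^k,h^l$ of a generator $h$ of $St_a$.

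Next I would set $F=g^{-k}f^l\in H$. Composing germs, its right-germ at $a$ becomes $h^{-lk}\circ h^{lk}=\mathrm{Id}$, so $F$ coincides with the identity on some right-neighborhood $(a,a+\varepsilon')$ of $a$. In parallel, since $(a,b')\subset\supp(g)$ and the continuous function $g-\mathrm{Id}$ does not vanish there, $g^k$ is either strictly above or strictly below the identity on the whole interval $(a,b')$. As $b\in(a,b')$, this forces $g^{-k}(b)\neq b$; combined with $f(b)=b$ it yields $F(b)=g^{-k}(b)\neq b$, so $b\in\supp(F)$.

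Finally, since $F\in\G$, its support is a finite union of open intervals; let $(c_1,c_2)$ be the one containing $b$. Then $F(c_1)=c_1$, $c_1<b<c_2$, and $c_1\geq a+\varepsilon'>a$ because $F$ is trivial on $(a,a+\varepsilon')$. Setting $\tilde b=c_1$, $\tilde c=b$, the pair $(F,f)\in H\times H$ then witnesses the hypothesis of Lemma~\ref{algtwo}: $F(\tilde b)=\tilde b$, $f(\tilde c)=\tilde c$, $(\tilde b,\tilde c]\subset(c_1,c_2)\subset\supp(F)$, and $[\tilde b,\tilde c)\subset(a,b)\subset\supp(f)$ using $c_1>a$.

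The only point that is more than bookkeeping is the two-part verification for $F$: trivial germ at $a$ (pure group computation from Lemma~\ref{cyclic}) together with $F(b)\neq b$ (monotonicity of $g^k$ on $(a,b')$). Once these are in place, picking out the interval of $\supp(F)$ around $b$ and matching it against the statement of Lemma~\ref{algtwo} is automatic.
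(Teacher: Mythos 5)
Your proof is correct, and it rests on the same central observation as the paper's: by Lemma~\ref{cyclic} the stabiliser $St_a$ in $\Pl$ is cyclic (including the case $a=-\infty$, via $St_\infty=\A$), so one can build an element of $H$ whose right-germ at $a$ is trivial but which nonetheless moves $b$; the left endpoint of its support component containing $b$ then gives the interior fixed point needed for Lemma~\ref{algtwo}. The implementation differs, though. The paper's proof works with a conjugate of one generator by a high power of the other (so that its support shrinks into a break-point-free right neighbourhood $(a,c)$ of $a$) and then kills the germ by premultiplying, producing a commutator-type element; the final prechain consists of the conjugate and this commutator. You instead kill the germ directly, reading the exponents $k,l$ off the germ powers and forming the single product $F=g^{-k}f^{\,l}$, and then pair $F$ with the original $f$. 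Your route is shorter: it needs no auxiliary clean neighbourhood, no choice of $k$ to shrink the support, and the verification of Lemma~\ref{algtwo}'s hypotheses for $(F,f)$ on $[c_1,b]$ is essentially bookkeeping. The paper's version yields a pair both of whose supports live entirely near $a$, which is not needed here, so nothing is lost by your simplification.
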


\begin{proof}
In a small enough right neighbourhood of $a$ there are no break points of $f$ and $g$. Let $c$ be a point in that neighbourhood. Clearly, $a<c<b$. Without loss of generality, we can assume that $f(x)>x$ for $x\in(a,b)$, and idem for $g$ (otherwise, we can replace them with their inverse). For some $k\in\N$, $f^{-k}(b)<c$. Denote $g'=f^{-k}gf^k$. Consider the elements $g'$ and $g^{-1}g'$. As the stabiliser of $a$ in $\Pl$ is cyclic (by Lemma~\ref{cyclic}), $g^{-1}g'(x)=x$ for $x\in(a,f^{-k}(c))$. However, $g^{-1}g'(x)=g^{-1}(x)$ for $x\in(f^{-k}(b),b)$, and in particular $g^{-1}g'(x)\neq x$ in that interval. Let $c'$ be the largest fixed point of $g^{-1}g'$ that is smaller than $f^{-k}(b)$. Consider now $g'$. It is the conjugate of $g$, therefore it is different from the identity in $(a,f^{-k}(b))$ and fixes $f^{-k}(b)<c$. Clearly, $c'<f^{-k}(b)$. Then $g',g^{-1}g'$ and $c',f^{-k}(b)$ satisfy the assumptions of Lemma~\ref{algtwo}. Observe that the same arguments can be used for two elements with supports $(a,b)$ and $(a',b)$ with $a\neq a'$.
\end{proof}

Consider the natural extension of the action of $\G$ on $\R\cup\{+\infty,-\infty\}$, which is that every element of $\G$ fixes both $-\infty$ and $+\infty$. We make the convention that $+\infty$ is considered to be a break point of $f\in\G$ if and only if for every $M\in\R$ there is $x>M$ such that $f(x)\neq x$ (and idem for $-\infty$). In other words, if the support of an element is equal to an interval $(a,b)$, $a$ and $b$ are break points even if one or both are infinite. We now prove that $H$ is solvable by induction on the number of different orbits of $H$ on $\R\cup\{\pm\infty\}$ that contain non-trivial break points of elements of $H$. Remark that the number of orbits of $H$ that contain non-trivial break points of elements of $H$ is the same as the number of orbits that contain non-trivial break points of $h_1,\dots,h_k$. In particular, it is finite.

Consider all maximal (for inclusion) intervals $I^i_j$ over all couples $(i,j)$. We denote them $I_1,I_2,\dots,I_n$. By our hypothesis we have that they do not intersect each other. We denote $h_i^j=h_i\restriction_{I_j}$ and $H_j=\langle h_1^j,h_2^j,\dots,h_k^j\rangle$ for every $j<n$. As the intervals $I_j$ do not intersect each other, $H$ is a subgroup of the Cartesian product of $H_j$: 

\begin{equation}\label{maxint}H\leq\prod_{j=1}^n H_j.\end{equation}

Moreover, for every $j$, the amount of orbits with non-trivial break points of $H_j$ is not greater than that of $H$. Indeed, the orbits with break points of $H_j$ inside $I_j$ coincide with those of $H$, and it has only two other orbits containing break points, which are the singletons containing the end points of $I_j$. We just need to prove that $H$ has at least two other orbits containing non-trivial break points. If $I_j=I_{i'}^{j'}$, then the supremum and infimum of the support of $h_{i'}$ are break points, and by definition of $I_j$ their orbits by $H$ do not intersect the interior of $I_j$. The convention we chose assures that our arguments are also correct if one or both of the end points is infinite. It is thus sufficient to prove the induction step for $H_j$ for every $j$. Therefore without loss of generality we can assume $n=1$. Remark that in this case the end points of $I_1$ are both non-trivial break points, and both clearly have trivial orbits.

We denote $(a,b)=I=I_1$. Consider the germs $g_i\in St_a$ of $h_i$ at a right neighbourhood of $a$. As $St_a$ is cyclic, there exist $m_i\in\Z$ such that $\prod_i g_i^{m_i}$ generates a subgroup of $St_a$ that contains $g_i$ for all $i$. Specifically, the image in $\Z$ of this product is the greatest common divisor of the images in $\Z$ of $g_i$. We denote $h=\prod_i h_i^{m_i}$ and let, for every $i$, $n_i$ satisfy $(\prod_i g_i^{m_i})^{n_i}=g_i$. For every $i\leq k$, we consider $h'_i=h_ih^{-n_i}$.

Clearly, $H=\langle h,h'_1,h'_2,\dots,h'_k\rangle$, and there exists $\varepsilon$ such that for every $i$, $\supp(h'_i)\subset(a+\varepsilon,b-\varepsilon)$ (as the assumptions of Lemma~\ref{algbonus} are not satisfied by $h,h'_i$). Consider the set of $h^{-l}h'_ih^l$ for $i<k,l\in\Z$ and their supports. They are all elements of $H$. Furthermore, there is a power $n$ such that $h^n(a+\varepsilon)>b-\varepsilon$. Therefore, for every point $x\in(a,b)$, the number of elements of that set that contain $x$ in their support is finite. Considering the intervals that define those supports, we can therefore choose a maximal one (for the inclusion). Let $x_0$ be the lower bound of a maximal interval. By our assumption, $x_0$ is then not contained in the support of any of those elements, and neither is $x_l=h^l(x_0)$ for $l\in\Z$. We denote ${h'}_i^j=h^jh'_ih^{-j}\restriction(x_0,x_1)$. For $i<k$, let $J_i$ be the set of $j\in\Z$ such that ${h'}_i^j\neq Id$. Then $H$ is a subgroup of

\begin{equation}\label{wreath}
\left\langle h,\bigcup_{i<k}\bigcup_{j\in J_i}{h'}_i^j\right\rangle\cong\langle h\rangle\wr\left\langle\bigcup_{i<k}\bigcup_{j\in J_i}{h'}_i^j\right\rangle.
\end{equation}

For a group $F$, $\Z\wr F$ denotes the wreath product of $\Z$ on $F$. It is a group, the elements of which are pairs $(n,f)$ with $n\in\Z$ and $f\in\prod_{k\in\Z}F$ with finite support. The group multiplication is defined as $(n,f)(n',f')=(n+n',T^{n'}f+f')$, where $T^{n'}f(k)=f(k-n')$. It is a well known property of wreath products that if $F$ is solvable, so is $\Z\wr F$.

Denote $H'=\langle\bigcup_{i<k}\bigcup_{j\in J_i}{h'}_i^j\rangle$. The non-trivial break points and supports of ${h'}_i^j$ are contained in $(x_0,x_1)$, and they fix that interval. Therefore the orbits that contain those break points are the same in relation to $\langle h,H'\rangle$ and to $H'$. On the other hand, $\langle h,H'\rangle$ and $H$ act the same way locally, which means that they have the same orbits. Those two facts imply that $H'$ has at least two less orbits containing non-trivial break points than $H$ (as it does not have non-trivial break points in the orbits of the end points of $I$). That group also does not contain elements that satisfy the assumptions of Lemma~\ref{algtwo}. Indeed, assume that there are two words on $\bigcup_{i<k}\bigcup_{j\in J_i}{h'}_i^j$ and $a,b\in\R$ that satisfy those assumptions. Their supports are also contained in $(x_0,x_1)$, therefore so are $a$ and $b$. Then the same words in $\bigcup_{i<k}\bigcup_{j\in J_i}h'_i$ are equal inside $(a,b)$, and they satisfy the conditions of Lemma~\ref{algtwo}. However, $h'_i$ are elements of $H$ and this is contradictory to our assumptions.

This provides the induction step. The induction basis is the trivial group, which is solvable. Therefore $H$ is solvable.
\end{proof}

We can now prove the main result, that is that for any subgroup $H$ of $H(\Z)$ which is not locally solvable and any measure $\mu$ on $H$ such that the support of $\mu$ generates $H$ as a semigroup and has finite first break moment $\mathbb{E}[Br]$, the Poisson boundary of $(H,\mu)$ is non-trivial.

\begin{proof}[Proof of Theorem~\ref{main}]Fix $H$ and take $\mu$ on $H$ with finite first break moment and the support of which generates $H$ as a semigroup. We distinguish two cases.
	
	Assume first that there exist $f,g\in H$ and $b,c,s\in\R$ that satisfy the assumptions of Lemma~\ref{algone}. By the result of the lemma, the Poisson boundary of $(H,\mu)$ is non-trivial.
	
	We now assume that no such $f,g,b,c,s$ exist and will prove that $H$ is locally solvable. Any finitely generated subgroup $\widetilde{H}$ of $H$ clearly also does not contain such $f$ and $g$ for any $b,c,s\in\R$. Furthermore, $H(\Z)$ is a subgroup of the piecewise $\Pl$ group $\G$ (see Definition~\ref{tildeg}), and thus $\widetilde{H}$ is a subgroup of $\G$. Therefore by Lemma~\ref{algthree} we obtain that $\widetilde{H}$ is solvable, which proves that $H$ is locally solvable.\end{proof}

\section{A remark on the case of finite $1-\varepsilon$ moment}\label{last}

%

Remark that in the proof of Lemma~\ref{algthree}, for a finitely generated subgroup that does not satisfy the assumptions of Lemma~\ref{algone} we obtained more than it being solvable. If the subgroup is also non-abelian, we have proven that it contains a wreath product of $\Z$ with another subgroup (see (\ref{wreath})). In particular, it is not virtually nilpotent, which implies (as it is finitely generated) that there exists a measure on it with non-trivial boundary by a recent result of Frisch-Hartman-Tamuz-Vahidi-Ferdowski~\cite{choquet-deny}. Furthermore, it is known that
on the wreath products $\Z\wr\Z$ it is possible to obtain a measure with finite $1-\varepsilon$ moment and non-trivial Poisson boundary for every $\varepsilon>0$ (see Lemma~\ref{wreathnontriv} and discussion before and after it). The same arguments can be used in $\G$:

\begin{lemma}\label{mineps}
	For every finitely generated subgroup $H=\langle h_1,\dots,h_k\rangle$ of $\G$ that is not abelian and every $\varepsilon>0$ there exists a symmetric non-degenerate measure $\mu$ on $H$ with non-trivial Poisson boundary such that $\int_H |g|^{1-\varepsilon}d\mu(g)<\infty$, where $|g|$ is the word length of $g$.
\end{lemma}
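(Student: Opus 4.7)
The proof splits according to whether the hypotheses of Lemma~\ref{algone} are satisfied for some $f,g\in H$ and $b,c,s\in\R$. If they are, take $\mu$ to be the uniform symmetric probability measure on the finite symmetric generating set $\{h_1^{\pm 1},\dots,h_k^{\pm 1}\}$. This measure is symmetric, non-degenerate, finitely supported, and therefore has finite $1-\varepsilon$ moment; by Remark~\ref{brfin} it has finite first break moment. Lemma~\ref{algone} then yields non-triviality of the Poisson boundary.

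If the hypotheses of Lemma~\ref{algone} do not hold, retrace the proof of Lemma~\ref{algthree} up to the wreath-product construction in equation~(\ref{wreath}). Since $H$ is non-abelian by hypothesis, the group $H'=\langle\bigcup_{i,j}(h_i')^j\rangle$ appearing there must be non-trivial, so the unrestricted element $h_i'\in H$ corresponding to one of these is non-identity. All non-identity elements of $\G$ have infinite order: an orientation-preserving self-homeomorphism $f$ of $\R$ with $f^n=\mathrm{Id}$ must equal the identity, since $f(x)>x$ implies $f^n(x)>x$ by monotonicity. The element $h_i'$ has compact support inside the interval $(a,b)$ on which $h$ acts strictly increasingly; for $n$ sufficiently large, the translates $h^{jn}(\supp h_i')$ are pairwise disjoint for $j\in\Z$, so the conjugates $h^{jn}h_i'h^{-jn}$ commute pairwise and each has infinite order. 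Together with $h^n$ shifting them by one, they generate a subgroup $K\leq H$ isomorphic to $\Z\wr\Z$.

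Apply Lemma~\ref{wreathnontriv} to produce a symmetric non-degenerate measure $\mu_K$ on $K$ with finite $1-\varepsilon$ moment (in the $K$-word metric) and non-trivial Poisson boundary. Since the generators of $K$ are elements of $H$, the $H$-word length is bounded by a constant multiple of the $K$-word length on $K$, so $\mu_K$ also has finite $1-\varepsilon$ moment as a measure on $H$. To ensure non-degeneracy on all of $H$, set $\mu=\tfrac12\mu_K+\tfrac12\nu$, where $\nu$ is uniform on $\{h_1^{\pm1},\dots,h_k^{\pm1}\}$. This $\mu$ is symmetric, non-degenerate on $H$, and has finite $1-\varepsilon$ moment.

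For the non-triviality of the Poisson boundary of $(H,\mu)$, invoke the configuration framework of the paper: choose $s\in P_\Z\cup\Q$ to be a break point of $h_i'$, so that $C_{h_i'}(s)\neq 0$. The induced walk of $\mu_K$ on the Schreier graph of the orbit $Hs$ is transient---this is built into the construction of Lemma~\ref{wreathnontriv}, the relevant transience being that of a heavy-tailed centred walk along the $h$-orbit---so by the comparison lemma (Lemma~\ref{var}) and the fact that $\mu\geq\tfrac12\mu_K$, the $\mu$-walk on $Hs$ is also transient. Since $f_\nu$ is finitely supported and $f_{\mu_K}\in l^2(Hs)$ by the construction, $f_\mu\in l^2(Hs)$, so Lemma~\ref{ltwoforreal} gives pointwise convergence of the configurations $C_{g_n}$ almost surely. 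The limit is non-deterministic because with positive probability a single $\mu_K$-step by $h_i'$ shifts the configuration at $s$ by a fixed non-zero amount, exactly as in Lemma~\ref{nostable}, producing a non-trivial quotient of the Poisson boundary. The main obstacle lies in this last step: one must verify that the specific hypotheses of Lemma~\ref{ltwoforreal}---both the transience of the induced walk and the $l^2$-summability of $f_\mu$---transfer cleanly from $\mu_K$ on $K$ to the perturbed measure $\mu$ on $H$, which should go through by routine comparison but requires inspecting the precise construction supplied by Lemma~\ref{wreathnontriv}.
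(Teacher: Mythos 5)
Your proposal follows the paper's broad plan in the first half (split on the hypotheses of Lemma~\ref{algone}; when they fail, exploit the wreath-product structure appearing at (\ref{wreath})), but then diverges in a way that introduces a real gap, which you yourself flag at the end.

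The paper does \emph{not} use the configuration machinery (Lemmas~\ref{ltwo}, \ref{ltwoforreal}) here at all. Instead it observes that $H$ surjects onto each $H_j$ (restriction to $I_j$), picks a non-abelian $H_1$, and uses that $H_1$ embeds into $\Z\wr H'$ with $h^1\mapsto(1,0)$ and $(h^1)'_i\mapsto(0,f_i)$. The measure $\mu=\tfrac12(\mu_1+\mu_2)$, defined on $\langle h\rangle$ and the $h'_i$, pushes forward along $H\twoheadrightarrow H_1\hookrightarrow\Z\wr H'$ to a measure whose $\Z$-projection is transient and whose ${H'}^\Z$-projection is finitary and non-trivial; Lemma~\ref{wreathnontriv} then applies directly, and the resulting boundary is a quotient of that of $(H,\mu)$. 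No transience on a Schreier graph in $\R$, no $l^2$-estimate on $f_\mu$, is ever needed.

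Your route builds a subgroup $K\cong\Z\wr\Z$ of $H$ (which is a reasonable and essentially correct construction once one checks that some $h'_i\ne\mathrm{Id}$ and takes $s$ to be a fixed endpoint of $\mathrm{supp}(h'_i)$), puts a measure $\mu_K$ on it, mixes with $\nu$, and then tries to close via configurations. The problem: (i) non-triviality of the Poisson boundary of a measure supported on a \emph{subgroup} does not automatically pass to a perturbed measure on the overgroup, so the configuration route is essential to your argument, and yet (ii) Lemma~\ref{ltwoforreal} requires a measure of the specific form $\varepsilon\lambda+(1-\varepsilon)\mu_0$ with $\lambda$ supported on $H_s$, which $\tfrac12\mu_K+\tfrac12\nu$ is not, and (iii) the claim $f_{\mu_K}\in l^2(Gs)$ is not free: the break-point set of $h^{jn}$ grows linearly in $|j|$, so for a heavy-tailed $\mu_K$ the function $f_{\mu_K}$ has polynomial tails along the $h$-orbit and lies in $l^2$ only under additional moment restrictions that you would need to spell out (and the summability condition $\sum_n\langle\lambda^{*n}\tilde f,\tilde f\rangle<\infty$ is a further quantitative transience hypothesis, not just qualitative transience). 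Also, Lemma~\ref{wreathnontriv} is a sufficient condition for boundary non-triviality, not a construction; the measure $\mu_K$ has to be produced by hand exactly as the paper produces $\mu_1,\mu_2$.

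The clean fix, and in fact what the paper does: do not pass to the subgroup $K$ and do not invoke configurations at all. Push $\mu=\tfrac12\mu_K+\tfrac12\nu$ (or the paper's $\tfrac12(\mu_1+\mu_2)$) forward along the surjection $H\to H_1\hookrightarrow\Z\wr H'$. The $\Z$-projection is a mixture of a transient symmetric measure with a finitely supported one, hence transient by Lemma~\ref{var}; the ${H'}^\Z$-projection is finitary and non-trivial because some $f_i\neq0$. Lemma~\ref{wreathnontriv} then gives non-triviality of the projected boundary, and since the Poisson boundary of a quotient measure is a quotient of the Poisson boundary of $(H,\mu)$, you are done. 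Your construction of the measure is essentially correct; it is the non-triviality argument that must be routed through the quotient rather than the configurations.
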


We recall that every measure on an abelian group has trivial Poisson boundary (see Blackwell~\cite{blackwell1955}, Choquet-Deny~\cite{ChoquetDeny}).

\begin{proof}
As there is always a non-degenerate symmetric measure with finite first moment, we can assume that the assumptions of Lemma~\ref{algone} are not satisfied in $H$. We will use the results on the structure of $H$ seen in the proof of Lemma~\ref{algthree}. It is shown (see (\ref{maxint})) that $H$ is a subgroup of a Cartesian product $\prod_{j=1}^n H_j$. Specifically, there exist disjoint intervals $I_1,I_2,\dots,I_n$ such that the supports of elements of $H$ are included in the union of those intervals. Taking $h_i^j=h_i\restriction_{I_j}$ to be the restriction on one of those intervals (as defined in Definition \ref{restr}), the group $H_j$ is then equal to $\langle h_1^j,h_2^j,\dots,h_k^j\rangle$. For any $j$, consider the composition of the projection of $\prod_{j=1}^n H_j$ onto $H_j$ and the inclusion of $H$ in $\prod_{j=1}^n H_j$. Then $H_j$ is the quotient of $\prod_{j=1}^n H_j$ by the kernel of this composition, which is equal to $\{h\in\prod_{j=1}^n H_j,h\restriction_{I_j}\equiv0\}$.

We can therefore separately define measures on $H_j$ and on the kernel, and the Poisson boundary of their sum would have the Poisson boundary of the measure on $H_j$ as a quotient. In particular, it suffices to show that for some $j$ we can construct a measure on $H_j$ with non-trivial boundary satisfying the conditions of the lemma. As $H$ is non-abelian, so is at least one $H_j$. Without loss of generality, let that be $H_1$. In the proof of Lemma~\ref{algthree} we have shown (see (\ref{wreath})) that in $H_1$ there are elements $h^1$ and ${h^1}'_j$ for $j=1,2,\dots,k$ such that $H_1=\langle{h^1},{h^1}'_1,{h^1}'_2,\dots,{h^1}'_k\rangle$ and is isomorphic to a subgroup of the wreath product of $h^1$ on a group $H'$ defined by the rest of the elements. Remark that $H_1$ not being abelian implies that $H'$ is not trivial. Furthermore, by taking the group morphism of $H_1$ into $\Z\wr H'$, we see that the image of $h^1$ is the generator $(1,0)$ of the active group, while for every $j$, the image of ${h^1}'_j$ is of the form $(0,f_j)$ where $f_j$ has finite support. The following result is essentially due to Kaimanovich and Vershik~\cite[Proposition~6.1]{kaimpoisson},\cite[Theorem~1.3]{Kai83}, and has been studied in a more general context by Bartholdi and Erschler~\cite{Bartholdi2017}:

\begin{lemma}\label{wreathnontriv}
	Consider the wreath product $\Z\wr H'$ where $H'$ is not trivial, and let $\mu$ be a measure on it such that the projection of $\mu$ on $\Z$ gives a transient walk and the projection of $\mu$ on ${H'}^\Z$ is finitary and non-trivial. Then the Poisson boundary of $\mu$ is not trivial.
\end{lemma}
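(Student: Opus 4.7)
The plan is to use transience of the $\Z$-projection to produce a non-trivial tail invariant of the walk, namely the pointwise limit of the lamp configuration. Write each increment as $y_i = (a_i, \phi_i)$ with $a_i \in \Z$ and $\phi_i \in \bigoplus_{k\in\Z} H'$ finitely supported, and let $S_n = a_1 + \cdots + a_n$ and $g_n = (S_n, F_n)$. The wreath product law $(n, f)(n', f') = (n+n', T^{n'} f \cdot f')$ with $T^{n'}f(k) = f(k - n')$, applied inductively to $g_{n+1} = y_{n+1} g_n$, gives the explicit formula
$$F_n(k) = \phi_n(k - S_{n-1})\, \phi_{n-1}(k - S_{n-2}) \cdots \phi_1(k),$$
with $S_0 = 0$. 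Since the lamp projection is finitary, the supports $\supp(\phi_i)$ lie in a common bounded set $[-M, M]$, so the $i$-th factor is non-trivial only when $S_{i-1} \in [k-M, k+M]$. Transience of $(S_n)$ on $\Z$ ensures this occurs for only finitely many $i$ almost surely, so each $F_n(k)$ stabilizes to a limit $F_\infty(k)$, and $F_\infty$ is a well-defined random element of $\prod_k H'$.

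Next, I would observe that $F_\infty$ is measurable with respect to the equivalence relation of Definition~\ref{poisson}: two trajectories that coincide from some index onwards (up to a time shift) give the same eventual sequence $F_n(k)$ for every fixed $k$, and hence the same $F_\infty$. Therefore $F_\infty$ descends to a measurable function on the Poisson boundary, and it suffices to show its distribution is not a Dirac mass.

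For the non-triviality step, by assumption there exists $(a, \phi) \in \supp(\mu)$ with $\phi \neq e$; fix $k_0 \in \Z$ with $\phi(k_0) \neq e$. Conditioning on the first step being $(a, \phi)$---an event of positive probability---we have $F_1(k_0) = \phi(k_0) \neq e$. By transience of the $\Z$-projection and the strong Markov property, the walk $(S_n)_{n\geq 1}$ started from $S_1 = a$ has strictly positive probability to avoid the bounded set $[k_0 - M, k_0 + M]$ forever; on this event, no further factor in the product modifies $F_\infty(k_0)$, so $F_\infty(k_0) = \phi(k_0) \neq e$ with positive probability. On the other hand, by the finite-visit description above, one can also exhibit a positive-probability event on which $F_\infty(k_0)$ differs from $\phi(k_0)$ (for instance by conditioning on a first step with a different lamp value at $k_0$, or on the walk returning to the neighborhood of $k_0$ at step 2 and producing a different product). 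Hence $F_\infty(k_0)$ is not almost surely constant, so $F_\infty$ has non-trivial distribution and the Poisson boundary is non-trivial.

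The main obstacle is the last step: exhibiting two distinct values of $F_\infty(k_0)$ with positive probability can be delicate when the lamp projection is concentrated on a very small set, and the cleanest argument requires a case analysis on whether two distinct lamp increments with different values at $k_0$ exist in the support, or whether one must use return visits of the walker to realize distinct products. In the setting we will apply the lemma, the measure is sufficiently rich that the first alternative always suffices.
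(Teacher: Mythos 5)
Your proof follows the same route as the paper's: project the walk onto the lamp at a fixed coordinate, use transience of the $\Z$-projection to make that lamp value stabilize, and read off a tail-invariant random variable $F_\infty(k_0)$. The product formula for $F_n$ is correct for left multiplication, the finitely-many-visits argument for stabilization is correct, and the observation that $F_\infty$ is tail-measurable (hence a quotient of the Poisson boundary) is also correct. The paper's own proof is exactly this, written in one line.

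The step you flag as the ``main obstacle'' --- showing $F_\infty(k_0)$ is not a.s.\ constant --- is indeed a genuine gap, and in fact it cannot be closed under the hypotheses of the lemma as stated. Take $\mu = \delta_{(1,\phi)}$ with $\phi$ the configuration that is $h\neq e$ at $0$ and trivial elsewhere: the $\Z$-projection $\delta_1$ is transient and the lamp projection $\delta_\phi$ is finitary and non-trivial, yet the walk is deterministic, so $F_\infty(0)=h$ almost surely and the Poisson boundary is trivial. This same gap is present in the paper's proof, which simply asserts that the limit ``is not almost always the same'' by the hypotheses --- an assertion that the counterexample shows to be false. What actually rescues the lemma in the paper's application (Lemma~\ref{mineps}) is that the measure used there has extra structure: it charges lamp-only increments $(0,f_j)$ with $f_j\neq e$ as well as $\Z$-only increments from a symmetric transient $\mu_1$, and with both in the support one really can produce two distinct values of $F_\infty(k_0)$ with positive probability, via the argument you sketch (condition first on a nontrivial lamp increment with no return, then on a trivial lamp increment with no return). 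One more minor point: your intermediate claim that a transient walk started from $S_1=a$ has positive probability to avoid a bounded set forever is not true in general (for instance for a nearest-neighbour biased walk started just below the set); the correct statement is only that there are finitely many visits, so the argument should be phrased in terms of the last-visit time rather than ``never enter''. In the symmetric heavy-tailed setting of the paper's application this distinction is harmless.
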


In the article of Kaimanovich and Vershik, it is assumed that the measure is finitary, and the acting group is $\Z^k$ for $k\geq3$, which assures transience. The proof remains unchanged with our assumptions. Remark that those results have also been generalised in the case of a measure with finite first moment that is transient on the active group, see Kaimanovich~\cite[Theorem~3.3]{Kaimanovich1991},\cite[Theorem~3.6.6]{Kaimanovich2007PoissonBO}, Erschler~\cite[Lemma~1.1]{erschler2011}.

\begin{proof}
Take a random walk $(g_n)_{n\in\N}$ on $\Z\wr H'$ with law $\mu$. Let $p$ be the projection of the wreath product onto the factor isomorphic to $H'$ that has index $0$ in ${H'}^\Z$. By the assumptions of the lemma, $p(h_n)$ stabilises, and is not almost always the same. This provides a non-trivial quotient of the Poisson boundary of $\mu$.
\end{proof}

All that is left is constructing a measure that verifies the assumptions of Lemma \ref{wreathnontriv}. Consider a symmetric measure $\mu_1$ on $\langle h^1\rangle$ that has finite $1-\varepsilon$ moment and is transient. Let $\mu_2$ be defined by being symmetric and by $\mu_2({h^1}'_j)=\frac{1}{2k}$ for every $j$. Then $\mu=\frac{1}{2}(\mu_1+\mu_2)$ is a measure on $H_1$ with non-trivial Poisson boundary.
\end{proof}

\bibliographystyle{plain}
\bibliography{hz,jus}

\end{document}